\documentclass[10pt,onecolumn]{article}
\usepackage[left=1in,top=1in,right=1in,bottom=1in]{geometry}
\usepackage{graphicx,caption,hyperref,authblk,enumitem,grffile,parskip}
\usepackage{amssymb,amsmath,amsthm,xcolor,mathrsfs}
\usepackage[hang,flushmargin]{footmisc} 
\graphicspath{{./Figures/}}	
\usepackage{transparent}
\usepackage[normalem]{ulem}

\theoremstyle{plain}

\newtheorem{proposition}{Proposition}[section]

\newtheorem{lemma}{Lemma}[section]
\newtheorem{remark}{Remark}[section]

\DeclareMathOperator{\eps}{\varepsilon}
\title{\vspace{-4em}A symmetric mechanism for symmetry-breaking\\ in oscillator networks with strong nonlinear coupling}
\author[1]{Theodore Vo}
\author[2]{Yangyang Wang}

\affil[1]{\footnotesize School of Mathematics, Monash University, Clayton, Victoria 3800, Australia}
\affil[2]{\footnotesize Department of Mathematics, Brandeis University, Waltham, Massachusetts 02453, USA}

\begin{document}
\bibliographystyle{abbrv}
\maketitle

\begin{abstract} \noindent
In this article, we describe and analyse a novel mechanism for symmetry-breaking in minimal symmetrically coupled identical slow/fast oscillator networks with strong nonlinear mutually inhibitory coupling. We show that the symmetry-breaking, surprisingly, originates from the canard dynamics of a folded node that lies on the axis of symmetry. By applying geometric singular perturbation theory and the blow-up technique to a normal form, we determine the geometric mechanisms by which the {\em symmetric folded node} induces symmetry-breaking. More specifically, we show that (i) the fold curve of the coupled system is orthogonal to the axis of symmetry at the symmetric folded node; (ii) there is only one primary maximal canard (either strong or weak, depending on parameters), which always lies on the axis of symmetry and is the axis of rotation for the twisting of solutions; and (iii) the number of rotations is the key local diagnostic feature that breaks the symmetry. 
Our work is closely related to that of Kristiansen and Pedersen [SIAM J. Appl. Dyn. Syst., {\bf 22} (2023)] on symmetrically coupled FitzHugh-Nagumo oscillators with strong linear inhibitory gap junctional coupling, however, we consider nonlinear coupling and we identify and study multiple sub-types of their `cusped singularities'.
We demonstrate our theoretical results by applying them to a model of the eukaryotic cell cycle in which the symmetric folded node plays a key role in rhythmogenesis. More specifically, we study periodic and quasi-periodic symmetry-breaking mixed-mode oscillatory attractors of the cell cycle model. We show that the local twisting induced by the symmetric folded node is the local mechanism that both breaks the symmetry and generates the small-amplitude oscillations in the mixed-mode dynamics.
\end{abstract}

\noindent
{\bf Key words.} 
folded singularities,
symmetry-breaking,
symmetric folded node,
mixed-mode oscillations,
cusp,
mutual inhibition
\vspace{1em}

\noindent
{\bf MSC codes.}  
34E17, 
37N25, 
34C15, 
37C79 

\section{Introduction} \label{sec:intro}
Symmetry-breaking refers to a phenomenon that occurs in models or systems that have certain symmetries, but their solutions do not necessarily share these symmetries. It plays key roles in numerous branches of science, including elementary particle physics \cite{Gross1996}, pattern formation in chemical oscillator systems \cite{Mainzer1997}, transitions between gaits in animal locomotion \cite{Collins1993}, and chimera states in coupled oscillator networks \cite{Abrams2004,Kuramoto2002}.  

In classical deterministic symmetry-breaking, systems of coupled identical oscillators can exhibit asymmetric rhythms that are small perturbations of a symmetric state. Common examples include rhythms that are slightly out of phase or are close to anti-phase. 
Recently, a new type of symmetry-breaking, called {\em strong symmetry-breaking} (SSB), was discovered in minimal chimera systems, i.e., low-dimensional networks of coupled, identical slow/fast oscillators that exhibit chimera-like solutions \cite{Awal2023,Awal2024,Epstein2024}. In this context, SSB occurs when the amplitudes, frequencies, and/or other properties of the coupled, identical oscillators differ substantially, typically by an order of magnitude. Such SSB rhythms are far from any symmetric state. For example, one oscillator can exhibit small-amplitude oscillations (SAOs) whilst another oscillator can exhibit large-amplitude oscillations (LAOs) or mixed-mode oscillations (MMOs), despite the two oscillators being identical and the coupling being symmetric.  

Some of the first examples of SSB rhythms were the SAO-LAO and SAO-MMO rhythms found in coupled, identical planar slow/fast oscillators from chemistry (for coupled Lengyel-Epstein oscillators \cite{Awal2019,Awal2024} and coupled Koper models \cite{Epstein2024}) and engineering (for coupled van der Pol equations \cite{Awal2023}). In these systems, the coupling is diffusive and is modeled by linear terms in the vector field.  Moreover, the coupling strengths are typically small-amplitude so that, to leading order, the geometry of the coupled system can be regarded as a cross-product of the geometries of the individual oscillators; see {\em cross-product quilts} in \cite{Awal2023}.

For these model systems, it was shown that the primary mechanism responsible for the SSB was a non-symmetric (or asymmetric) folded singularity. Asymmetric folded singularities are points in the phase space that stay away from the axis of symmetry, and allow solutions to cross from an invariant slow manifold of one stability to an invariant slow manifold with a different stability. It was shown that the maximal canard solutions associated to the asymmetric folded singularity would determine key diagnostics of the SSB rhythms, such as the number of oscillations each oscillator would make and whether each oscillator would remain small-amplitude or would be directed to make a fast (large-amplitude) jump. 

In this article, we report on a robust, novel SSB mechanism for coupled, identical slow/fast oscillator networks with strong nonlinear coupling. We find that the singularity that underlies the new SSB is a {\em symmetric folded node}. 
Even though the symmetric folded node lies on the axis of symmetry, the canard solutions that it generates are (surprisingly) responsible for symmetry-breaking. 

There are two main contributions in this manuscript. First, we provide rigorous analysis of the novel symmetric folded node and the symmetry-breaking that it induces. 
More specifically, we show that for identical slow/fast oscillators coupled via strong nonlinear mutual inhibition:
\begin{enumerate}[label=(\roman*)]
\setlength{\itemsep}{-2pt}
\item there are two types of symmetric folded node: {\em strong} and {\em weak};
\item the fold curve is orthogonal to the axis of symmetry at the symmetric folded node;
\item every solution that passes near the symmetric folded node is guaranteed to exhibit at least one twist; 
\item the symmetric folded node only has one primary canard, which is aligned with axis of symmetry;
\item the one primary canard is also the axis of rotation for the twisting; 
\item The funnel of the symmetric folded node (strong or weak) corresponds to the entirety of the attracting sheet of the critical manifold.
\item the slow manifolds are partitioned into ribbons and the parity of the ribbons, i.e., the number of twists that the ribbon makes, determines the direction of any subsequent fast jump. 
\end{enumerate}
We also show numerically that the strong symmetric folded node has two types of sequential rotational behaviour: the primary rotation occurs around the axis of symmetry and the secondary rotation occurs around a rotation axis that seems to emerge from the weak eigendirection of the symmetric folded node. 
Our second main contribution in this article is the demonstration of our symmetric folded node theory in a model of the eukaryotic cell cycle \cite{Dragoi2024}, which consists of a pair of identical slow/fast oscillators with strong nonlinear coupling in the fast directions. We show that the symmetric folded node in this system plays a major role in the formation of the SSB MMO attractors, such as those shown in Fig.~\ref{fig:introsymmbreaking}.

\begin{figure}[h!]
  \centering
  \includegraphics[width=5in]{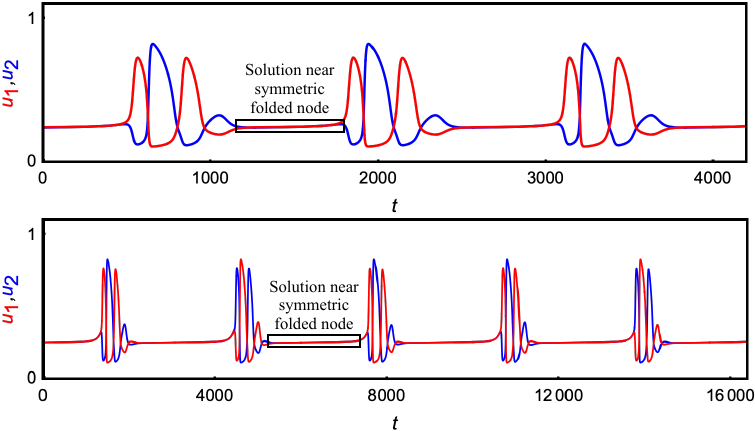}
  \caption{Representative SSB MMO attractors in a model for the eukaryotic cell cycle (see Section~\ref{sec:modelgspt}). (a) Periodic attractor. Oscillator 1 (red) exhibits two large-amplitude spikes per period, whereas oscillator 2 (blue) exhibits one spike per period, the timing of which occurs approximately in between the two spike events of oscillator 1. (b) Quasi-periodic attractor. The number of large-amplitude spikes varies from event to event. In both cases, the symmetry-breaking is determined by the slow local dynamics near a symmetric folded node (boxed regions).}
  \label{fig:introsymmbreaking}
\end{figure} 

To the best of our knowledge, there have been two other studies of canard dynamics in coupled oscillator systems with strong coupling \cite{Kristiansen2023,Pedersen2026}. 
In \cite{Kristiansen2023}, the authors study a symmetrically coupled pair of FitzHugh-Nagumo oscillators with inhibitory gap junctional coupling, i.e., the coupling terms are linear functions of the fast variables. Moreover, the coupling strength is $\mathcal O(1)$ with respect to the small timescale separation parameter $\eps$. They show that their system possesses a symmetric folded singularity, which they call a {\em cusped singularity} since it occurs at a cusp bifurcation of the layer problem. They then use geometric singular perturbation theory and the blow-up technique to study the emergence of small-amplitude oscillations from the neighbourhood of the cusped singularity, and they study the resulting mixed-mode oscillations. 

In \cite{Pedersen2026}, the author extends the results of \cite{Kristiansen2023} to symmetrically coupled oscillator systems with mutually inhibitory coupling of any form but with the added condition that a full system equilibrium is close to the cusped singularity. 
That is, the author considers parameter configurations close to a folded saddle-node bifurcation of type II \cite{Krupa2010}.
The author demonstrates that the proximity of the full system equilibrium to the cusped singularity is responsible for rhythmogenesis in two examples: a two-cell Wilson-Cowan-like neural network with slow adaptation, and a synaptically coupled Morris-Lecar network with fast synaptic inhibition. 

Our work here may be regarded as a direct continuation of \cite{Kristiansen2023} and \cite{Pedersen2026}. 
We concentrate specifically on coupled oscillators with strong nonlinear mutually inhibitory coupling. 
We make no assumptions about the existence of any full system equilibria near the cusped singularity. 
In this setting, we will show that the symmetric folded nodes (i.e., the cusped node singularities of \cite{Kristiansen2023}) can have two distinct types, strong and weak, each with distinctly different dynamics. 
A core focus of our work which differs from \cite{Kristiansen2023,Pedersen2026} is that we focus on how the local dynamics near symmetric folded node singularities induces symmetry breaking in these coupled oscillator networks. 

\medskip
\begin{remark}
A closely related case study is provided in \cite{MayoraCebollero2025}. There, the authors study small networks of Brusselator models symmetrically coupled with strong linear coupling. The main differences in \cite{MayoraCebollero2025} to our work and that of \cite{Kristiansen2023,Pedersen2026} is that (i) the coupled Brusselator system is a non-standard slow/fast system \cite{Wexbook}, (ii) the coupling is present in all components of the vector field, and (iii) they focus on synchronization and almost synchronization. 
\end{remark}

The outline of the manuscript is as follows. 
In Section~\ref{sec:normalform}, we develop a normal form for the symmetric folded node in symmetrically coupled, identical slow/fast oscillators with strong mutually inhibitory nonlinear coupling. We use geometric singular perturbation theory to uncover the geometry of the system and identify key features of the symmetric folded node. 
Then, in Section~\ref{sec:blowup} we perform a blow-up analysis of the symmetric folded node to rigorously determine the properties of the slow manifolds and their rotational behaviour.  
In Section~\ref{sec:ribbons}, we demonstrate the rotational properties of the symmetric folded node by computing the invariant slow manifolds. In particular, we show that there is a {\em ribboning} of the slow manifolds (cf. \cite{Hasan2017,Hasan2018}), and the parity of the ribbons determines the direction of the symmetry breaking, i.e., it determines which of the coupled oscillators is most likely to exhibit a fast transition away from the neighbourhood of the symmetric folded node. 
We then apply our results to a canonical model for the cell cycle control system in Section~\ref{sec:modelgspt}, and show that the dynamics about the symmetric folded node are responsible for the SSB rhythms in Section~\ref{sec:cellmodelsymmbreaking}.  
Finally, we conclude in Section~\ref{sec:discussion} with a discussion.

\section{The Symmetric Folded Node} \label{sec:normalform}
Motivated by dual oscillator models from biology (see Section~\ref{sec:modelgspt}), we study minimal networks of identical slow/fast oscillators with mutually inhibitory symmetric coupling of the form
\begin{equation} \label{eq:coupled_general}
  \begin{split}
    \dot u_1 &= f(u_1,v_1) - c u_1 u_2, \\ 
    \dot v_1 &= \eps g(u_1,v_1), \\ 
    \dot u_2 &= f(u_2,v_2) - c u_1 u_2, \\ 
    \dot v_2 &= \eps g(u_2,v_2),
  \end{split}
\end{equation}
where $f$ and $g$ are sufficiently smooth functions, $0<\eps \ll 1$ encodes the timescale separation, and the coupling strength $c$ is $\mathcal{O}(1)$ with respect to $\eps$. 
In this section, we develop a normal form for symmetric folded nodes, which are folded node singularities that occur at fast subsystem cusp bifurcations (Section~\ref{subsec:normalform}).  
We then use geometric singular perturbation theory \cite{Fenichel1979,Jones1995} to identify the key geometric features of the symmetric folded node (Section~\ref{subsec:normalformGSPT}). 
Subsequently, we apply center manifold reduction to study the reduced dynamics near the symmetric folded node (Section~\ref{subsec:cmreduction}). 

\subsection{Normal form}  \label{subsec:normalform}
The system \eqref{eq:coupled_general} is $\mathbb Z_2$-symmetric under the exchange of the two identical oscillators. More precisely, there is a matrix, $\mathcal R$, given by
\[ \mathcal R = \begin{bmatrix} 0 & 0 & 1 & 0 \\ 0 & 0 & 0 & 1 \\ 1 & 0 & 0 & 0 \\ 0 & 1 & 0 & 0 \end{bmatrix}, \]
such that the vector field is equivariant under this symmetry, i.e.,
\begin{equation} \label{eq:symmetry}
  \begin{split}
    \mathcal R \boldsymbol F(\boldsymbol u) = \boldsymbol F(\mathcal R \boldsymbol u),
  \end{split}
\end{equation}
where $\boldsymbol u = (u_1,v_1,u_2,v_2)^T$ denotes the state vector and $\boldsymbol F$ denotes the vector field in \eqref{eq:coupled_general}.  
Consequently, the set $\mathcal L_s = \left\{  u_1=u_2, v_1=v_2 \right\}$ is the fixed-point subspace of the map $\mathcal R$, and hence it is an invariant subspace of the system \eqref{eq:coupled_general}. We refer to $\mathcal L_s$ as the axis of symmetry (even though it is a 2D subspace). Along $\mathcal L_s$, the system \eqref{eq:coupled_general} reduces to the {\em symmetric subsystem}
\begin{equation} \label{eq:symmetricsubsystem}
  \begin{split}
    \dot u &= f(u,v) - c u^2, \\ 
    \dot v &= \eps g(u,v),
  \end{split}
\end{equation}
which is a standard slow/fast system with fast variable $u$ and slow variable $v$.

{\bf Assumption 1:} The symmetric subsystem \eqref{eq:symmetricsubsystem} possesses a non-degenerate fold point, i.e., there exists a point $(u_0,v_0)$ with $\lVert (u_0,v_0) \rVert = \mathcal{O}(1)$ with respect to $\eps$ such that 
\begin{equation}
  \begin{split}
    f(u_0,v_0)-c u_0^2 &= 0, \quad 
    \frac{\partial f}{\partial u}(u_0,v_0) = 0, \\ 
    \frac{\partial f}{\partial v}(u_0,v_0) &\neq 0, \quad
    \frac{\partial^2 f}{\partial u^2}(u_0,v_0) \neq 0.
  \end{split}
\end{equation}

The conditions in Assumption~1 imply that there exists a neighbourhood of the point $(u_0,v_0)$ such that the $u$-nullcline is locally parabolic.

{\bf Assumption 2:} The point $(u_0,v_0)$ is a regular fold point, i.e.,
\[ g(u_0,v_0) \neq 0. \]

Assumption~2 implies that the slow flow along the critical manifold of the symmetric subsystem \eqref{eq:symmetricsubsystem} is singular at the fold point $(u_0,v_0)$, see \cite{Krupa2001}.

\medskip

\begin{proposition} \label{prop:normalform}
Given the coupled oscillator system \eqref{eq:coupled_general} satisfying Assumptions~1 and ~2, there exists a smooth change of coordinates which transforms the system \eqref{eq:coupled_general} to the form 
\begin{equation} \label{eq:normalform}
  \begin{split}
    \dot u_1 &= -u_1-u_2+v_1+u_1^2-\left( 1+\tfrac{1}{2}\lambda_1 \right) u_1 v_2 -a (u_1 v_2+u_2 v_1) + P_1(u_1,u_2,v_1,v_2,\eps), \\
    \dot u_2 &= -u_2-u_1+v_2+u_2^2  - \left( 1+\tfrac{1}{2}\lambda_1 \right) u_2 v_1 -a (u_2 v_1+u_1 v_2)+ P_2(u_2,u_1,v_2,v_1,\eps), \\
    \dot v_1 &= \eps \left[ 1+\tfrac{1}{2} (\lambda_1+\lambda_2) u_1+ b u_1^2 + \tfrac{1}{4} a (\lambda_1+\lambda_2)u_2^2 + Q_1(u_1,u_2,v_1,v_2,\eps) \right], \\ 
    \dot v_2 &= \eps \left[ 1+\tfrac{1}{2} (\lambda_1+\lambda_2) u_2 + b u_2^2+ \tfrac{1}{4} a \left( \lambda_1+\lambda_2 \right) u_1^2  + Q_2(u_2,u_1,v_2,v_1,\eps) \right],
  \end{split}
\end{equation}
in a neighbourhood of the origin, where $(\lambda_1,\lambda_2,a,b) \in \mathbb R^4$ are computable constants and the overdot denotes differentiation with respect to $t$. The higher order terms are given by 
\begin{equation*}
  \begin{split}
    P_1(u_1,u_2,v_1,v_2,\eps) &= p u_1^2 u_2 + \tfrac{5a+2a^2+p}{2} u_1 u_2^2 + k \eps u_1 + \mathcal O\left( (u_1+u_2) v_1 v_2,  \eps u_1 v_2  \right), \\
    Q_1(u_1,u_2,v_1,v_2,\eps) &= \mathcal O\left( u_1 (v_1+v_2), u_2 (v_1+v_2), \eps^2 \right),
  \end{split}
\end{equation*}
where $(p,k) \in \mathbb R^2$ are computable constants, and the vector field obeys the symmetry \eqref{eq:symmetry}, so that 
\[ P_2(u_1,u_2,v_1,v_2,\eps) = P_1(u_2,u_1,v_2,v_1,\eps) \quad \text{ and } \quad Q_2(u_1,u_2,v_1,v_2,\eps) = Q_1(u_2,u_1,v_2,v_1,\eps). \] 
%
%
%
\end{proposition}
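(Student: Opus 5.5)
The normal form will be reached by a sequence of symmetry-preserving coordinate changes. Every transformation will be applied identically to both oscillators, i.e.\ it will commute with $\mathcal R$, so the $\mathbb Z_2$-equivariance \eqref{eq:symmetry} is kept at each step. This is what eventually gives $P_2(u_1,u_2,v_1,v_2,\eps)=P_1(u_2,u_1,v_2,v_1,\eps)$ and the analogous relation for $Q$.

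\textbf{Step 1: translate and rescale.} First translate $(u_i,v_i)\mapsto(u_0+u_i,v_0+v_i)$ and Taylor expand $f$ and $g$ about $(u_0,v_0)$. By Assumption~1, $f(u_0,v_0)=cu_0^2$ and $f_u(u_0,v_0)=0$. The linear part of $\dot u_1$ is therefore $-cu_0u_2-cu_0u_1+f_v v_1$, so the coupling contributes $-cu_0(u_1+u_2)$. The quadratic part consists of $\tfrac12 f_{uu}u_1^2$, the cross terms $f_{uv}u_1v_1$ and $\tfrac12 f_{vv}v_1^2$, and the coupling term $-cu_1u_2$.
Next rescale $u_i\mapsto\alpha u_i$, $v_i\mapsto\beta v_i$ and $t\mapsto\gamma t$. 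With $cu_0\neq0$ (the generic case in which the cusp structure is present), the three free constants let us normalise $-cu_0(u_1+u_2)$ to $-(u_1+u_2)$, the $v_1$ coefficient to $1$, and the $u_1^2$ coefficient to $1$. These use $f_v\neq0$ and $f_{uu}\neq0$. The sign of $\eps$ is fixed by a further choice of sign, using $g(u_0,v_0)\neq0$ from Assumption~2 so that the constant term in $\dot v_i$ becomes $1$ after absorbing $g_0$ into the rescalings (the time rescaling acts on $\eps$).

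\textbf{Step 2: remove quadratic terms by near-identity changes.} Apply symmetric near-identity transformations of the form $u_i\mapsto u_i+(\text{quadratic in }u,v)$ and $v_i\mapsto v_i+\dots$ to remove, or reduce to the stated combinations, the nonresonant quadratic monomials.
The linear part of the fast subsystem is $-(u_1+u_2)+v_1$. It has a zero eigenvalue along $u_1=-u_2$ and eigenvalue $-2$ along $u_1=u_2$, and the slow variables enter as parameters. Terms such as $v_1^2$ and $v_1v_2$ can therefore be absorbed by shifting $u_i$ by multiples of $v$ and by redefining $v_i$ by adding quadratic functions of $v$. This uses the fact that $v$ obeys the trivial layer dynamics, and it generates only $\mathcal O(\eps)$ corrections, which is where the $k\eps u_1$ term comes from.
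The remaining $u_iv_j$ cross terms cannot all be removed. Two symmetric combinations survive: the self-coupling $u_1v_2$ term and the mixed combination $u_1v_2+u_2v_1$. I will parameterise their coefficients as $-(1+\tfrac12\lambda_1)$ and $-a$. I expect $\lambda_1$ and $\lambda_2$ to be defined as the eigenvalue-related quantities of the reduced (folded node) problem, which is why they reappear in the $\dot v$ equation. The linear coefficient of $u_1$ in $\dot v_1$ is then written as $\tfrac12(\lambda_1+\lambda_2)$, which defines $\lambda_2$.
The quadratic terms in $\dot v_i$ are simplified the same way: $u_1u_2$ is removed by a shift of $v$, leaving $bu_1^2$ and the stated $u_2^2$ term. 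The coefficient of the $u_2^2$ term is forced to $\tfrac14 a(\lambda_1+\lambda_2)$ by the preceding choices.

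\textbf{Step 3: cubic terms and the main obstacle.} Finally, perform a cubic-order normalisation in the $u$ variables. This keeps only the $u_1^2u_2$ coefficient $p$ and fixes the $u_1u_2^2$ coefficient to $\tfrac{5a+2a^2+p}{2}$; all other terms are collected into the stated $\mathcal O$ remainders.

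I expect the main obstacle to be the bookkeeping in Step 2. The order of the near-identity changes matters, because each $v$-shift feeds back into the fast equations through $\eps$ and through the coupling. The choice of which resonant monomials to keep also has to be made so that the coefficients organise into $\lambda_1,\lambda_2,a,b$ exactly as stated. In particular, the relation tying the $u_1u_2^2$ coefficient to $a$ and $p$ must emerge as a consistency condition from the cubic elimination. I would first verify this step by carrying out the computation symbolically on the symmetric and antisymmetric coordinates $u_1\pm u_2$, $v_1\pm v_2$, where the symmetry reduces the number of independent coefficients, and only then transform back.
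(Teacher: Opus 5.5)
Your plan follows the paper's argument: translate the fold point to the origin, Taylor expand, then apply $\mathcal R$-equivariant near-identity changes that respect the slow/fast splitting, with the rescaling the paper leaves implicit made explicit; your expectation that the $u_2^2$ coefficient in $\dot v_1$ and the $u_1u_2^2$ relation are forced is also correct, because the only coupling is $-cu_1u_2$ and $g$ depends on $(u_1,v_1)$ alone (for example, the substitution $u_1=\tilde u_1+\tfrac a2(\tilde u_1^2+\tilde u_2^2)$ and its mirror image removes the coupling term and, through the linear slow term, produces exactly $\tfrac14 a(\lambda_1+\lambda_2)\tilde u_2^2$ in $\dot v_1$). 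One small correction to Step 1: once $\alpha,\beta,\gamma$ fix the three fast coefficients there is no sign left to choose, since the rescalings preserve the sign of ($u_1^2$-coefficient)$\times$($v_1$-coefficient)$\times$(constant in $\dot v_1$)$/$($u_1$-coefficient), so you get $+u_1^2$ together with $+1$ in $\dot v_i$ only when $c\,u_0\,f_{uu}f_v\,g(u_0,v_0)>0$, a condition the paper does not state either.
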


\begin{proof}
In the first step, we translate the fold point $(u_0,v_0)$ to the origin. 
Then, we Taylor expand the vector field in \eqref{eq:coupled_general} about the origin $(u_1,v_1,u_2,v_2,\eps)=(0,0,0,0,0)$.
Finally, the system \eqref{eq:normalform} is obtained via a sequence of restricted near-identity transformations that preserve both the slow/fast structure and the reflection symmetry $\mathcal R$, and are designed to eliminate non-resonant terms from the vector field. 
\end{proof}

We will show that the normal form \eqref{eq:normalform} possesses a folded singularity \cite{Szmolyan2001} at the origin, and that the coefficients $\lambda_1$ and $\lambda_2$ determine its type.
We will also demonstrate that the canard dynamics of the folded singularity are determined at leading order by the parameters $\lambda_1$ and $\lambda_2$, whereas the terms associated with the parameters $a$ and $b$ affect the canard dynamics at higher order. 

\medskip
\begin{remark}
The normal form \eqref{eq:normalform} is not unique. The dominant coupling terms in the fast $u_1$ and $u_2$ equations of \eqref{eq:normalform} are the mixed $u_1 v_2$ and $u_2 v_1$ terms. An alternative choice of the sequence of near-identity transformations in the proof of Proposition~\ref{prop:normalform} can be used to remove these mixed $u_1 v_2$ and $u_2 v_1$ terms. The trade-off, however, is that terms involving $u_1 v_1, u_2 v_1, u_1 v_2$, and $u_2 v_2$ are re-introduced and become irremovable.  Moreover, the coefficients of these 4 lowest-order coupling terms have the form $(\alpha \pm \tfrac{1}{2}\lambda_1)$, where $\lambda_1$ is again an eigenvalue of the folded singularity and $\alpha$ is a system parameter that only affects the canard dynamics at higher order. 
\end{remark}

\subsection{Geometric Singular Perturbation Analysis} \label{subsec:normalformGSPT}
The normal form \eqref{eq:normalform} is a slow/fast system with fast variables $(u_1,u_2)$ and slow variables $(v_1,v_2)$. To analyse the system, we use techniques from geometric singular perturbation theory \cite{Fenichel1979,Jones1995}. The essence of the approach is to split the dynamics into the slow motions along a key invariant manifold, $S$, and the fast motions in the complementary space $\mathbb R^4 \setminus S$.

\subsubsection{Layer problem, critical manifold, \& slow manifolds}
Setting $\eps = 0$ in the normal form \eqref{eq:normalform} yields the 2D layer problem
\begin{equation} \label{eq:layernf}
  \begin{split}
    \dot u_1 &= -u_1-u_2+v_1+u_1^2-\tfrac{2+2a+\lambda_1}{2} u_1 v_2 - a u_2 v_1 + p u_1^2 u_2 + \tfrac{5a+2a^2+p}{2} u_1 u_2^2 + \mathcal O\left( (u_1+u_2) v_1 v_2 \right), \\
    \dot u_2 &= -u_1-u_2+v_2+u_2^2 -a u_1 v_2 - \tfrac{2+2a+\lambda_1}{2} u_2 v_1 + \tfrac{5a+2a^2+p}{2} u_1^2 u_2 + p u_1 u_2^2 + \mathcal O\left( (u_1+u_2) v_1 v_2 \right),
  \end{split}
\end{equation}
which approximates the fast dynamics in the limit of infinitely slow motions, i.e., the slow variables $v_1$ and $v_2$ are fixed parameters. 
The critical manifold, $S$, is the 2D set of equilibria of the layer problem
\[ S = \left\{ (u_1,u_2,v_1,v_2) \in \mathbb R^4 : v_1= v_{1S}(u_1,u_2), \,\, v_2 = v_{2S}(u_1,u_2) \right\}, \]
where the function $v_{1S}$ is given by 
\begin{equation*}
  \begin{split}
    v_{1S} &= u_1+u_2+\tfrac{2a+\lambda_1}{2}u_1^2 + \tfrac{4a+2+\lambda_1}{2}u_1 u_2 + a u_2^2+\gamma_{30} u_1^3 + \gamma_{21} u_1^2 u_2 + \gamma_{12} u_1 u_2^2 + \gamma_{03} u_2^3+\mathcal O \left( (u_1+u_2)^4 \right), 
  \end{split}
\end{equation*}
and $v_{2S}(u_1,u_2) = v_{1S}(u_2,u_1)$ by the symmetry $\mathcal R$.
Here, the coefficients of the cubic terms are given by
\begin{equation*}
  \begin{split}
\gamma_{30} &= \tfrac{a(2+2a+\lambda_1)}{2}, \\
\gamma_{21} &= 1+3a(1+a)-p+\lambda_1+2a \lambda_1 + \tfrac{1}{4}\lambda_1^2, \\
\gamma_{12} &= \tfrac{1}{4} \left( 8a^2-2p+\lambda_1(2+\lambda_1)+a(6\lambda_1-2) \right), \\
\gamma_{03} &= a^2.
  \end{split}
\end{equation*}

The linear stability of $S$ is determined by the eigenvalues of the Jacobian of the layer problem, which has trace and determinant given by 
\begin{equation*}
  \begin{split}
    \left. \operatorname{tr} D_u f \right|_S &= -2 -\left(2a+\lambda_1\right) (u_1+u_2) + \mathcal{O}((u_1+u_2)^2), \\ 
    \left. \det D_u f \right|_{S} &= \lambda_1 (u_1+u_2) + \rho_1 u_1^2 + \rho_1 u_2^2 + \rho_2 u_1 u_2 + \mathcal{O}((u_1+u_2)^3),
  \end{split}
\end{equation*}
where the coefficients $\rho_1$ and $\rho_2$ are given by 
\[ \rho_1 = \tfrac{1}{2} \left( -2-a(1+2a)+p+\lambda_1+4a \lambda_1+\lambda_1^2 \right) \quad \text{ and } \quad \rho_2 = 4+4a^2-2p+\lambda_1(2+\lambda_1)+2a(7+2\lambda_1). \]
Sufficiently close to the origin, the trace is always negative. Thus, in a neighbourhood of the origin, the critical manifold can be partitioned as
\[ S = S_a \cup S_L \cup S_s, \]
where the attracting sheet
\[ S_a = \left\{ \left. (u_1,v_1,u_2,v_2) \in S : \det D_u f \right|_{S} > 0 \right\}, \] 
consists of stable nodes of the layer problem \eqref{eq:layernf}, the saddle sheet 
\[ S_s = \left\{ \left. (u_1,v_1,u_2,v_2) \in S : \det D_u f \right|_{S} < 0 \right\}, \]
consists of saddle equilibria of \eqref{eq:layernf}, and the fold curve
\[ S_L = \left\{ \left. (u_1,v_1,u_2,v_2) \in S : \det D_u f \right|_{S} = 0 \right\}, \] 
consists of saddle-node bifurcations of the layer problem. The fold curve can be written locally as a graph:
\begin{equation}  \label{eq:foldcurvegraph}
S_L = \left\{ (u_1,v_1,u_2,v_2) \in S : u_2 = -u_1 + \tfrac{6+15a+6a^2-3p+\lambda_1}{\lambda_1}u_1^2 + \mathcal O \left( u_1^3 \right) \right\}.
\end{equation}
This graph representation of $S_L$ reveals an important geometric feature of the mutually inhibitorily coupled system \eqref{eq:normalform}, which we state in general terms below.

\medskip

\begin{lemma} \label{lemma:orthogonality}
Suppose that the coupled oscillator system \eqref{eq:coupled_general} satisfying Assumption~1 has a regular fold point, $\boldsymbol p \in \mathcal L_s$, of the symmetric subsystem. Then (i) $\boldsymbol p$ is a fold point of the full coupled system \eqref{eq:coupled_general}, and (ii) the fold curve $S_L$ of the coupled system is orthogonal to the axis of symmetry $\mathcal L_s$ at $\boldsymbol p$. 
\end{lemma}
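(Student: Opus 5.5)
Our plan is to work directly with the general system \eqref{eq:coupled_general} rather than the normal form. At a point $\boldsymbol p=(u_0,v_0,u_0,v_0)\in\mathcal L_s$, Assumption~1 gives $f(u_0,v_0)-cu_0^2=0$, so both fast equations vanish and $\boldsymbol p\in S$. The Jacobian of the layer problem with respect to $(u_1,u_2)$ is
\[ J=\begin{bmatrix} f_u(u_1,v_1)-cu_2 & -cu_1 \\ -cu_2 & f_u(u_2,v_2)-cu_1\end{bmatrix}. \]
We read the fold condition of Assumption~1 as the fold of the symmetric subsystem, i.e.\ $f_u(u_0,v_0)-2cu_0=0$, which is how the normal form arises. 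At $\boldsymbol p$ this gives $J=-cu_0\begin{bmatrix}1&1\\1&1\end{bmatrix}$, so $\det J=0$ with a simple zero eigenvalue (eigenvector $(1,-1)$) and trace $-2cu_0\neq0$. Thus $\boldsymbol p$ lies on $S_L$. To call it a genuine (nondegenerate) fold point, we check the transversality conditions in the direction $(1,-1)$: the left null vector is also $(1,-1)$, and pairing it with $\partial_{v}$ of the fast vector field gives $f_v(u_0,v_0)(1,-1)\neq0$ for the antisymmetric perturbation $(v_1,v_2)=(s,-s)$. This proves (i).

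For (ii), we use the symmetry. Since $\mathcal R$ maps $S$ to $S$ and preserves $\det J$ (conjugating $J$ by the swap), $S_L$ is $\mathcal R$-invariant. Near $\boldsymbol p$, $S$ is a smooth 2D manifold: the Jacobian of the fast equations with respect to $(u_1,u_2,v_1,v_2)$ has full rank thanks to $f_v\neq0$, so we can parametrize $S$ by $(u_1,u_2)$ exactly as in \eqref{eq:foldcurvegraph}.

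Next we show $S_L$ is a smooth curve through $\boldsymbol p$ that meets $\mathcal L_s\cap S$ (the symmetric branch, a curve) only at $\boldsymbol p$. On $\mathcal L_s$, $\det J$ factorises as $(f_u-cu)^2-c^2u^2=(f_u-2cu)f_u$. The factor $f_u-2cu$ has a simple zero at the symmetric fold by $f_{uu}\neq0$ together with the symmetric-subsystem fold transversality. Hence $\nabla(\det J|_S)$ has a nonzero component along the symmetric branch, so $S_L$ is a smooth curve transverse to $\mathcal L_s$.

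An $\mathcal R$-invariant smooth curve through a fixed point $\boldsymbol p$ has a tangent line $T$ that is $\mathcal R$-invariant. Since $D\mathcal R=\mathcal R$ is linear, $T$ lies in the $+1$ or the $-1$ eigenspace of $\mathcal R$. The $+1$ eigenspace is $\mathcal L_s$, and transversality excludes it. Therefore $T\subset\{u_1=-u_2,\ v_1=-v_2\}=\mathcal L_s^\perp$, which gives orthogonality, consistent with $u_2=-u_1+\dots$ in \eqref{eq:foldcurvegraph}.

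The main obstacle is the transversality step, i.e.\ showing $\nabla(\det J|_S)$ is nonzero along the symmetric direction. One must also exclude the degenerate possibility that the gradient vanishes entirely, which would make $S_L$ singular at $\boldsymbol p$. We would compute the derivative of $\det J|_S$ along the symmetric branch, parametrized by $u$, explicitly. It equals $f_u\cdot\frac{d}{du}(f_u-2cu)$ evaluated at $u_0$, which is $2cu_0\,\bigl(f_{uu}-2c+f_{uv}\,dv/du\bigr)$ with $dv/du=0$ at the fold. So it equals $2cu_0(f_{uu}-2c)$, whose nonvanishing is exactly the nondegeneracy of the symmetric fold.
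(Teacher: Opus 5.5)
Your proof of (ii) takes a genuinely different route from the paper's. The paper diagonalises the layer Jacobian at $\boldsymbol p$: the eigenvalue is $\partial f/\partial u-2cu_0$ on $(1,1)$ and $\partial f/\partial u$ on $(-1,1)$. It then uses $\partial f/\partial u(u_0,v_0)=0$ from Assumption~1 to put the zero eigenvalue on $(-1,1)$, and reads orthogonality off that null eigenvector. You argue by equivariance instead: $S_L$ is $\mathcal R$-invariant, so its tangent at the fixed point $\boldsymbol p$ is an $\mathcal R$-invariant line, and transversality to $\mathcal L_s\cap S$ forces it into the $-1$ eigenspace. This is the better argument for (ii), because it is actually about the tangent of $S_L$. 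The paper tacitly identifies the null vector of $D_uF$ with that tangent. That identification is correct here only because the antisymmetric direction of $T_{\boldsymbol p}S$ is the lifted kernel $(\delta u_1,\delta u_2,\delta v_1,\delta v_2)=(1,-1,0,0)$ (the cusp), and that is exactly what your argument delivers. The price is that you must know $S_L$ is a smooth curve transverse to $\mathcal L_s$ at $\boldsymbol p$, which the paper never checks.

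That transversality step, however, is computed for the wrong situation.

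With your reading $f_u(u_0,v_0)=2cu_0$, one gets $J=cu_0\begin{bmatrix}1&-1\\-1&1\end{bmatrix}$, whose kernel is $(1,1)$, along $\mathcal L_s$. The matrix you wrote, $-cu_0\begin{bmatrix}1&1\\1&1\end{bmatrix}$, belongs to $f_u(u_0,v_0)=0$ instead. That is Assumption~1 as literally stated, the condition the paper's proof uses, and the one that yields the normal form, whose fast Jacobian at the origin is $\begin{bmatrix}-1&-1\\-1&-1\end{bmatrix}$ with eigenvalue $-2$ along $\mathcal L_s$. Your reading would make $\boldsymbol p$ a nondegenerate fold with kernel along $\mathcal L_s$, i.e.\ a jump point of the symmetric solution rather than a folded singularity. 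So it is not how the normal form arises; commit to $f_u(u_0,v_0)=0$.

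Part (i) is unaffected, since $\det J=f_u(f_u-2cu_0)=0$ at $\boldsymbol p$. You should, however, drop the claim that $\boldsymbol p$ is a nondegenerate fold. With $F$ the fast vector field and $\xi=\ell=(1,-1)$, symmetry gives $\ell\cdot D_u^2F(\xi,\xi)=(f_{uu}+2c)-(f_{uu}+2c)=0$, so $\boldsymbol p$ is the cusp point of the layer problem.

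More importantly, along the symmetric branch you now have $dv/du=2cu_0/f_v\neq0$, and
\[ \frac{d}{du}\bigl(f_u\,(f_u-2cu)\bigr)=-2cu_0\bigl(f_{uu}+\tfrac{2cu_0}{f_v}\,f_{uv}\bigr) \quad \text{at } u=u_0, \]
which $f_{uu}\neq0$ does not control. On $\mathcal L_s$ the desingularized flow reads $\dot u=-f_v\,g\,f_u$. Hence this quantity is $\tfrac{2cu_0}{f_v g}$ times the eigenvalue of the symmetric folded singularity along $\mathcal L_s$, i.e.\ a nonzero multiple of $\lambda_1$. Its nonvanishing therefore comes from Assumption~3, consistent with the $\lambda_1$ in the denominator of \eqref{eq:foldcurvegraph}, together with $cu_0\neq0$, which you use implicitly. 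With that hypothesis stated, your equivariance argument proves the lemma. Without it, $S_L$ need not be a smooth curve at $\boldsymbol p$, a point the paper's proof also glosses over.
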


\begin{proof}
The Jacobian of the layer problem of \eqref{eq:coupled_general}, evaluated at a point $\boldsymbol p = (u_0,v_0,u_0,v_0) \in \mathcal L_s$, is given by 
\[ \left. D_u f \right|_{\boldsymbol p} = \begin{bmatrix} \tfrac{\partial f}{\partial u} - c u_0 & -c u_0 \\ -c u_0 & \tfrac{\partial f}{\partial u} - c u_0 \end{bmatrix}, \]
where $\tfrac{\partial f}{\partial u} = \left. \tfrac{\partial f(u_1,v_1)}{\partial u_1} \right|_{\boldsymbol p} = \left. \tfrac{\partial f(u_2,v_2)}{\partial u_2} \right|_{\boldsymbol p}$. Since this matrix is symmetric, we immediately have that the eigenvalues are  
\[ \lambda_1 = \tfrac{\partial f}{\partial u} - 2 c u_0 \quad \text{ and } \quad \lambda_2 = \tfrac{\partial f}{\partial u}. \]
The corresponding eigenvectors, 
\[ \boldsymbol w_1 = \begin{bmatrix} 1 \\ 1 \end{bmatrix} \quad \text{ and } \quad \boldsymbol w_2 = \begin{bmatrix} -1 \\ 1 \end{bmatrix},\]
are aligned with $\mathcal L_s$, and orthogonal to $\mathcal L_s$, respectively. 
By Assumption~1, the derivative $\tfrac{\partial f}{\partial u}$ vanishes at a regular fold point of the symmetric subsystem. Hence, the eigenvalue $\lambda_2$ vanishes at a regular fold point of the symmetric subsystem. Consequently, the regular fold point of the symmetric subsystem must also be an element of the fold curve $S_L$ of the fully coupled system. Moreover, the eigenvector $\boldsymbol w_2$ corresponding to the zero eigenvalue is orthogonal to $\mathcal L_s$. 
\end{proof}

\begin{figure}[h!]
  \centering
  \includegraphics[width=3.5in]{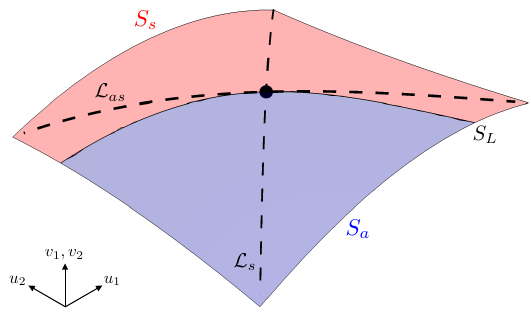}
  \caption{Critical manifold, $S$, of the layer problem in a neighbourhood of the origin (black marker) for representative values of $\lambda_1, a$ and $b$. The (blue) attracting sheet, $S_a$, and (red) saddle sheet, $S_s$, are separated by the (solid black) fold curve, $S_L$. 
  Also shown are the intersections of the symmetry and anti-symmetry axes, $\mathcal L_s$ and $\mathcal L_{as}$, with the critical manifold (dashed black curves). 
  In this class of coupled oscillator systems, $S_L$ is orthogonal to $\mathcal L_{s}$.}
  \label{fig:criticalmanifold}
\end{figure}

Thus, from both the graph representation of the fold curve $S_L$ and Lemma~\ref{lemma:orthogonality}, we have that the fold curve is tangent at the origin to the anti-symmetry axis, $\mathcal L_{as} := \left\{ u_1 = -u_2, v_1 = -v_2 \right\}$, and is orthogonal to $\mathcal L_s$.  The geometry of the layer problem \eqref{eq:layernf} is shown in Fig.~\ref{fig:criticalmanifold}.

\begin{remark}
In prior studies of SSB \cite{Awal2023,Awal2024,Epstein2024}, the fold curve was transverse and non-orthogonal to both $\mathcal L_s$ and $\mathcal L_{as}$. The reason for this is that the coupling strengths in these prior studies is weak, i.e., $\eps$-dependent. Consequently, the singular limit $\eps \to 0$ decouples the oscillators and the critical manifold of the coupled system is simply the cross-product of the critical manifolds of the individual oscillators.  Moreover, the fold curve of the coupled system is a cross-product of the fold points of the individual oscillators, i.e., the fold curve takes the form
\[ S_L = \left\{ u_1 = u_L,\,\, u_2 \in \mathbb R \right\} \cup \left\{ u_1 \in \mathbb R,\,\, u_2 = u_L \right\}, \]
where $u_L$ is the $u$-coordinate of a fold point of the uncoupled oscillator. Thus, for coupled oscillator systems with weak $\eps$-dependent coupling strengths, $S_L$ consists of lines parallel to the $u_1$- and $u_2$-axes. Hence, the intersection of $S_L$ with $\mathcal L_{s}$ is both transverse and non-orthogonal. 
\end{remark}

Fenichel's first theorem \cite{Fenichel1979,Jones1995} guarantees that the normally hyperbolic subsets of $S$, i.e., the parts of $S_a$ and $S_s$ which have spectra uniformly bounded away from the imaginary axis, will persist for $\eps$ sufficiently small and positive as invariant slow manifolds $S_{a,\eps}$ and $S_{s,\eps}$. These slow manifolds are $\mathcal O(\eps)$ perturbations of the critical manifold. Moreover, the local stable and unstable manifolds of $S_a$ and $S_s$ also perturb to $\mathcal O(\eps)$-close local stable and unstable manifolds of $S_{a,\eps}$ and $S_{s,\eps}$.

\subsubsection{Reduced flow and ordinary singularities}
The layer problem provides a good approximation of the dynamics of \eqref{eq:normalform} for the regions of phase space away from the critical manifold, i.e., in $\mathbb R^4 \setminus S$. The slow motions on $S$, however, require an alternative description. To obtain such a description, we switch to the slow time $\tau = \eps t$ in \eqref{eq:normalform} and take the singular limit $\eps \to 0$. This gives the 2D reduced problem
\begin{equation} \label{eq:reducednf}
  \begin{split}
    0 &= F_1(u_1,v_1,u_2,v_2,0), \\ 
    0 &= F_2(u_1,v_1,u_2,v_2,0), \\ 
    \dot v_1 &= G_1(u_1,v_1,u_2,v_2,0), \\ 
    \dot v_2 &= G_2(u_1,v_1,u_2,v_2,0),
  \end{split}
\end{equation}
where $F_i$ and $G_i$ denote the right-hand-sides of the $u_i$ and $v_i$ equations in \eqref{eq:normalform} for $i=1,2$. The overdot denotes differentiation with respect to $\tau$.
The algebraic equations in \eqref{eq:reducednf} constrain the phase space to the critical manifold, and the differential equations describe the slow motions along the critical manifold. 

To obtain a complete description of the slow dynamics in the full atlas of overlapping coordinate charts, we differentiate the algebraic constraints in \eqref{eq:reducednf} and solve for $\dot u_1$ and $\dot u_2$, which gives
\begin{equation}  \label{eq:reducednfatlas}
  \begin{split}
    \det D_u F \begin{bmatrix} \dot u_1 \\ \dot u_2 \end{bmatrix} &= -\operatorname{adj}D_u F \, \cdot  D_v F \cdot \, \begin{bmatrix} G_1 \\ G_2 \end{bmatrix}, \\ 
    \begin{bmatrix} \dot v_1 \\ \dot v_2 \end{bmatrix} &= \begin{bmatrix} G_1 \\ G_2 \end{bmatrix},
  \end{split}
\end{equation}
where $D_{\xi} F$ denotes the Jacobian of $F = (F_1, F_2)^T$ with respect to $\xi = (\xi_1, \xi_2)^T$ for $\xi \in \{ u,v \}$, $\operatorname{adj}$ denotes the classical adjoint, and all functions and derivatives are evaluated along $S$. 
In this formulation, we see that the reduced flow is singular along the fold set $S_L$, where $\det D_u F = 0$. To remove the finite-time blow-up of the vector field, we desingularize the system via the phase-space-dependent time transformation $dt_d = (\det D_u F)\,d\tau$, which gives the desingularized reduced system
\begin{equation} \label{eq:desing}
  \begin{split}
    \begin{bmatrix} \dot u_1 \\ \dot u_2 \end{bmatrix} &= -\operatorname{adj}D_u F \, \cdot  D_v F \cdot \, \begin{bmatrix} G_1 \\ G_2 \end{bmatrix}, \\ 
    \begin{bmatrix} \dot v_1 \\ \dot v_2 \end{bmatrix} &= \det D_u F  \begin{bmatrix} G_1 \\ G_2 \end{bmatrix},
  \end{split}
\end{equation}
where the overdot now denotes derivatives with respect to $t_d$. On the attracting sheets $S_a$ where $\det D_u F > 0$, the desingularized system \eqref{eq:desing} is topologically equivalent to the reduced system \eqref{eq:reducednf}. On the saddle sheets $S_s$ where $\det D_u F <0$, the flows of the reduced and desingularized systems have opposite orientation.

Ordinary singularities of the desingularized system,
\[ E = \left\{ (u_1,u_2,v_1,v_2) \in S : G_1  = 0 \,\, \text{ and } \,\, G_2 = 0 \right\}, \]
are isolated points in the phase space that correspond to equilibria of both the reduced and desingularized flows, \eqref{eq:reducednfatlas} and \eqref{eq:desing}. Moreover, they are the leading-order estimates of equilibria of the full system \eqref{eq:normalform}. (In systems where the functions $F$ and $G$ are independent of $\eps$, the ordinary singularities are precisely the equilibria of the full system.) We distinguish between {\em symmetric ordinary singularities}, $E_{\rm sym} := E \cap \mathcal L_s$, and {\em asymmetric ordinary singularities}, $E_{\rm asym} := E \setminus E_{\rm sym}$.

Fenichel's second theorem \cite{Fenichel1979,Jones1995} guarantees that the slow flow on the invariant slow manifolds are $\mathcal O(\eps)$ perturbations of the reduced flow on the critical manifold. That is, the singular limit $\eps \to 0$ of the slow flow on $S_{a,\eps}$ and $S_{s,\eps}$ will converge to the flow produced by system \eqref{eq:reducednfatlas}. A consequence of this is that if $E$ is a hyperbolic ordinary singularity of \eqref{eq:reducednfatlas}, then it will persist as a hyperbolic equilibrium of the full system for $\eps$ sufficiently small.

\subsubsection{Symmetric folded singularities of coupled oscillator systems}
The desingularized system possesses another important class of singularities. Folded singularities, $M$, are points along the fold set $S_L$ where the right-hand-side of \eqref{eq:desing} vanishes, i.e.,
\[ M = \left\{ (u_1,u_2,v_1v_2) \in S : \det D_u F = 0 \,\, \text{ and } \,\, \operatorname{adj}D_u F \, \cdot  D_v F \cdot \, \begin{bmatrix} G_1 \\ G_2 \end{bmatrix} = 0 \right\}. \]
Note that since $\det D_u F = 0$ at a folded singularity, the rows of $D_u F$ are linearly dependent and the vector equation in $M$ can be reduced to a single scalar equation. Hence, the set of folded singularities consists of isolated points in the phase space along which singular points of the reduced flow may become regular (since there is possibly a cancellation of a simple zero in the $u$-equations of \eqref{eq:reducednfatlas}). We distinguish between {\em symmetric folded singularities}, $M_{\rm sym} := M \cap \mathcal L_s$, and {\em asymmetric folded singularities}, $M_{\rm asym} := M \setminus M_{\rm sym}$.

Substituting the expressions for $F$ and $G$ from \eqref{eq:normalform} and recalling that along $S$, we have $v_1=v_{1S}(u_1,u_2)$ and $v_2=v_{2S}(u_1,u_2)$, the desingularized reduced system is 
\begin{equation} \label{eq:desingnf}
  \begin{split}
    \dot u_1 &= \tfrac{\lambda_1+\lambda_2}{2} u_1 + \tfrac{\lambda_1 - \lambda_2}{2} u_2 + \xi_{20} u_1^2 + \xi_{02} u_2^2 + \xi_{11} u_1 u_2 + \mathcal O \left( (u_1+u_2)^3 \right), \\ 
    \dot u_2 &= \tfrac{\lambda_1-\lambda_2}{2} u_1 + \tfrac{\lambda_1+\lambda_2}{2} u_2 + \xi_{02} u_1^2 + \xi_{20} u_2^2 + \xi_{11} u_1 u_2 + \mathcal O \left( (u_1+u_2)^3 \right),
  \end{split}
\end{equation}
where the coefficients $\xi_{20}, \xi_{02}$ and $\xi_{11}$ are given by
\begin{equation*} 
  \begin{split}
	\xi_{20} &= \tfrac{1}{4} \left( -4a^2+2p+a(\lambda_2-\lambda_1-14)+(2+\lambda_1)(\lambda_1+\lambda_2-2)+4b \right), \\
	\xi_{02} &= \tfrac{1}{4} \left( 12a+\lambda_1 a -\lambda_2 a - 4b \right), \\ 
	\xi_{11} &= \tfrac{1}{4} \left( 8+4a(7+2a)-4p-2\lambda_2+\lambda_1(2+\lambda_1+\lambda_2) \right).
  \end{split}
\end{equation*}
The desingularized system \eqref{eq:desingnf} possesses a symmetric folded singularity, $M$, at the origin. 
The associated eigenvalues and eigenvectors are given by 
\[ \lambda=\lambda_1, \,\, \boldsymbol w_1 = \begin{bmatrix} 1 \\ 1 \end{bmatrix} \quad {\rm and } \quad \lambda = \lambda_2, \,\, \boldsymbol w_2 = \begin{bmatrix} -1 \\ 1 \end{bmatrix}. \]
That is, the linear subspace associated with the eigenvalue $\lambda_1$ is aligned with the axis of symmetry $\mathcal L_s$.
Similarly, the linear subspace associated with the eigenvalue $\lambda_2$ is aligned with the anti-symmetry axis $\mathcal L_{as}$, which is orthogonal to $\mathcal L_s$ and tangent to $S_L$ at the origin.  

{\bf Assumption 3:} The symmetric folded singularity is a folded node, i.e.,
\[ \lambda_1 <0 \quad \text{ and } \quad \lambda_2 < 0. \]
%

We distinguish two subcases: (i) the {\em strong symmetric folded node} for which $\lambda_1<\lambda_2<0$, and (ii) the {\em weak symmetric folded node} for which $\lambda_2<\lambda_1<0$. 

\medskip \textbf{Strong symmetric folded node ($\boldsymbol{\lambda_1<\lambda_2<0}$)} \medskip \\
For the desingularized system \eqref{eq:desingnf}, the strong stable eigenvalue is $\lambda_1$ and the singular strong canard, $\gamma_0$, is the unique solution of \eqref{eq:desingnf} tangent at the origin to the strong stable eigenvector $\boldsymbol w_1$, which lies along the projection of $\mathcal L_s$ onto the $(u_1,u_2)$-plane.
Thus, $\gamma_0$ coincides with the axis of symmetry. Moreover, the basin of attraction (also known as the funnel) of the strong symmetric folded node (SFN) is the region of $S_a$ enclosed by $\gamma_0$ and the fold curve $S_L$. 
Due to the symmetry, the funnel of the SFN is (locally) all of $S_a$.
The singular strong canard $\gamma_0$ is also a solution of the reduced system \eqref{eq:reducednfatlas} that connects the attracting manifold $S_a$ to the  saddle manifold $S_s$. 

For the desingularized system \eqref{eq:desingnf}, the weak stable eigenvalue is $\lambda_2$ and the weak stable eigenvector $\boldsymbol w_2$ is aligned with the anti-symmetry axis, i.e., $\boldsymbol w_2$ is tangent to the fold curve $S_L$ at the SFN. 
(Recall from \eqref{eq:foldcurvegraph} that the fold curve also has a graph representation $u_2=u_{2L}(u_1)$.)
Let $\gamma_w$ be a solution of \eqref{eq:desingnf} that approaches the origin along the weak eigendirection $\boldsymbol w_2$. 
Then, $\gamma_w$ and the fold curve $S_L$ have the same tangent lines at the SFN, i.e., 
\[ \lim_{t_d \to \infty} \left. \frac{du_2}{d u_1} \right|_{\gamma_w} = \lim_{t_d \to \infty} \left. \frac{\dot u_2}{\dot u_1} \right|_{\gamma_w} = -1 
\quad \text{ and } \quad
\left. \frac{du_{2L}}{du_1}  \right|_{u_1=0} = -1. \]
Moreover, for most parameter values, the curvatures $\kappa(\gamma_w)$ and $\kappa(S_L)$ of $\gamma_w$ and $S_L$ do not coincide at the SFN. More specifically, 
\begin{equation*} 
  \begin{split}
	\lim_{t_d \to \infty}\kappa\left( \gamma_w\right) &= \lim_{t_d \to \infty} \left. \frac{d^2u_2}{d u_1^2} \right|_{\gamma_w} = \lim_{t_d \to \infty} \left. \frac{\dot u_1 \ddot u_2 - \dot u_2 \ddot u_1}{\dot u_1^3} \right|_{\gamma_w}  = \tfrac{2}{\lambda_2^2}(b+\lambda_2)(-6-15a-6a^2+2p-\lambda_1+2\lambda_2), \\
	\kappa\left(S_L\right) &= \left. \frac{d^2}{du_1^2} u_{2L} \right|_{u_1=0} = \tfrac{2}{\lambda_1} \left( 6+15a+6a^2-3p+\lambda_1 \right).
  \end{split}
\end{equation*}
Together, the slope and curvature calculations show that $\gamma_w$ has a quadratic tangency with the fold curve $S_L$ at the origin.
Thus, depending on parameters, the solution $\gamma_w$ of the desingularized system corresponding to the weak stable eigenvector approaches the origin either from the attracting sheet or from the saddle sheet, and it touches the fold curve at the origin tangentially. Thus, $\gamma_w$ is a solution of the reduced problem \eqref{eq:reducednfatlas} that stays strictly on $S_a$ or stays strictly on $S_s$. Hence, $\gamma_w$ is not a canard solution. 


The phase plane of the strong SFN is shown in Fig.~\ref{fig:reduced}(a).

\begin{figure}[h!]
  \centering
  \includegraphics[width=5in]{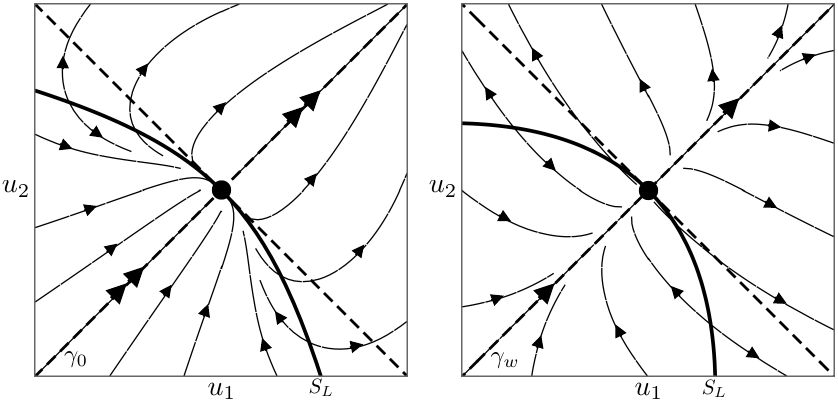}
  \put(-361,164){(a)}
  \put(-177,164){(b)}
  \caption{Reduced flow \eqref{eq:reducednfatlas} of the SFN in a neighbourhood of the origin. The diagonal and anti-diagonal (dashed lines) correspond to the symmetry and anti-symmetry axes, $\mathcal L_s$ and $\mathcal L_{as}$, respectively, onto the $(u_1,u_2)$ plane. The region below (resp., above) the fold curve $S_L$ corresponds to the attracting (resp., saddle) sheet of the critical manifold. There is only one singular canard solution, which coincides with $\mathcal L_s$. (a) For the strong SFN, the singular canard is the strong canard $\gamma_0$. (b) For the weak SFN, the singular canard is the weak canard $\gamma_w$.}
  \label{fig:reduced}
\end{figure}

\medskip \textbf{Weak symmetric folded node ($\boldsymbol{\lambda_2<\lambda_1<0}$)} \medskip \\
For the desingularized system \eqref{eq:desingnf}, the strong stable eigenvalue is $\lambda_2$ and the strong stable eigendirection $\boldsymbol w_2$ is aligned with the anti-symmetry axis. Moreover, the strong stable direction $\boldsymbol w_2$ is tangent to the fold curve. By the same arguments as in the strong SFN case, the solution $\gamma_0$ of the desingularized system corresponding to the strong stable direction stays strictly on one sheet of the critical manifold (either $S_a$ or $S_s$, depending on parameters). Thus, $\gamma_0$ does not correspond to a canard solution. 

For the desingularized system \eqref{eq:desingnf}, the weak stable eigenvalue is $\lambda_1$ and the weak stable eigendirection $\boldsymbol w_1$ is aligned with the axis of symmetry. Consequently, the solution $\gamma_w$ of the desingularized system \eqref{eq:desingnf} corresponding to $\boldsymbol w_1$ coincides with the axis of symmetry. Moreover, it is a solution of the reduced problem \eqref{eq:reducednfatlas} that connects the attracting and saddle sheets of the critical manifold. Hence, $\gamma_w$ is a canard solution of the reduced problem \eqref{eq:reducednfatlas}. 
The funnel of the SFN in this case is the region of $S_a$ enclosed by the fold curve. That is, the funnel corresponds to the entirety of $S_a$. 

The phase plane of the weak SFN is shown in Fig.~\ref{fig:reduced}(b).

\medskip
\begin{remark}
The symmetric folded node here is precisely the `cusped node singularity' of \cite{Kristiansen2023}. More specifically, the cusped node singularity studied in \cite{Kristiansen2023} is a weak SFN of the coupled FitzHugh-Nagumo system with linear repulsive coupling. (A strong SFN is also possible in this system, however, the authors in \cite{Kristiansen2023} restricted their parameter space to focus on the weak SFN and the folded saddle-node bifurcation that occurs when an ordinary singularity passes through it.) 

The cusp structure of the critical manifold in the normal form \eqref{eq:normalform} is not immediately obvious in the original $(u_1,v_1,u_2,v_2)$ coordinates. It becomes evident, however, when the critical manifold is viewed in the $(\hat v_1,\hat v_2, u_2)$ coordinates, where $\hat v_1 = \tfrac{1}{2} \left( v_1+v_2 \right)$ and $\hat v_2 = \tfrac{1}{2} \left( v_1-v_2 \right)$; see Fig.~\ref{fig:cusp}.

\begin{figure}[h!]
  \centering
  \includegraphics[width=3in]{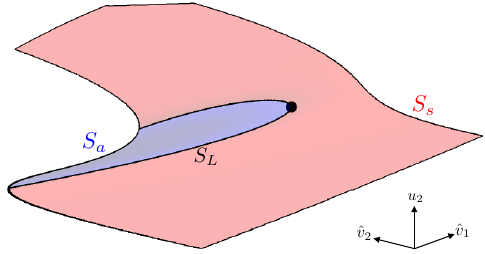}
  \caption{Critical manifold, $S$, of the normal form \eqref{eq:normalform} in the $(\hat v_1,\hat v_2, u_2)$ projection. For each fixed $\hat v_1 < 0$, $S$ is cubic-shaped with a two-fold structure. These two folds coalesce at $\hat v_1 = 0$ at the cusp point (black marker), corresponding to the symmetric folded singularity. For each fixed $\hat v_1 > 0$, $S$ is no longer folded.}
  \label{fig:cusp}
\end{figure}
\end{remark}

By the regular fold condition (Assumption~2), the system \eqref{eq:desingnf} has no ordinary singularities $E$ in a neighbourhood of the origin. Violation of the regular fold condition will result in bifurcations of the symmetric folded singularities. 
We refer forward to Section~\ref{subsec:SFSNs} for examples of such bifurcations in a model system. See also \cite{Kristiansen2023} for a detailed study of this scenario (referred to there as the `cusped saddle-node singularity') in a pair of strongly linearly coupled FitzHugh-Nagumo oscillators, and \cite{Pedersen2026} for additional examples.

\subsection{Center Manifold Reduction} \label{subsec:cmreduction}
To simplify the analysis, we now switch to a coordinate system in which the symmetry and anti-symmetry axes are aligned with the coordinate axes. Let
\[ \begin{bmatrix} \hat u_1 \\ \hat u_2 \end{bmatrix} = \frac{1}{2} \begin{bmatrix} 1 & 1 \\ 1 & -1  \end{bmatrix} \begin{bmatrix} u_1 \\ u_2 \end{bmatrix} \quad {\rm and } \quad \begin{bmatrix} \hat v_1 \\ \hat v_2 \end{bmatrix} = \frac{1}{2} \begin{bmatrix} 1 & 1 \\ 1 & -1 \end{bmatrix} \begin{bmatrix} v_1 \\ v_2 \end{bmatrix}, \]
so that the $\left\{ \hat u_1 = 0, \hat v_1 = 0 \right\}$ subspace corresponds to the anti-symmetry axis $\mathcal L_{as}$ and the $\left\{ \hat u_2 = 0, \hat v_2 = 0 \right\}$ subspace corresponds to the symmetry axis $\mathcal L_s$.
Then the system \eqref{eq:normalform} transforms to 
\begin{equation} \label{eq:nfsymasym}
  \begin{split}
    \frac{d\hat u_1}{dt} &= -2\hat u_1 + \hat v_1 + \hat u_1^2+\hat u_2^2 - \tfrac{4a+2+\lambda_1}{2} \hat u_1 \hat v_1 + \tfrac{2+\lambda_1}{2}\hat u_1 \hat v_2+\tfrac{4a+2+\lambda_1}{2} \hat u_2 \hat v_2 + k \eps \hat u_1 + \hat P_1, \\
    \frac{d\hat u_2}{dt} &= \hat v_2+2 \hat u_1 \hat u_2-\tfrac{2+\lambda_1}{2} \hat u_2 \hat v_1+k \eps \hat u_2 + \hat P_2, \\
    \frac{d\hat v_1}{dt} &= \eps \left[ 1+\tfrac{\lambda_1+\lambda_2}{2} \hat u_1 + \tfrac{4b+a(\lambda_1+\lambda_2)}{4} \hat u_1^2 + \tfrac{4b-a(\lambda_1+\lambda_2)}{2} \hat u_1 \hat u_2 + \tfrac{4b+a(\lambda_1+\lambda_2)}{4} \hat u_2^2 + \hat Q_1 \right], \\
    \frac{d\hat v_2}{dt} &= \eps \left[ \tfrac{\lambda_1+\lambda_2}{2} \hat u_2 +\tfrac{4b-a(\lambda_1+\lambda_2)}{2} \hat u_1 \hat u_2 + \hat Q_2 \right], \\
    \frac{d\eps}{dt} &= 0,
  \end{split}
\end{equation}
where we have appended the trivial equation for $\eps$ to the system, and the higher order terms $\hat P_1, \hat P_2, \hat Q_1$, and $\hat Q_2$ are given by
\begin{equation*}
  \begin{split}
    \hat P_1 &= \tfrac{5a+2a^2+3p}{2} \hat u_1^3 - \tfrac{5a+2a^2+3p}{2} \hat u_1 \hat u_2^2 + \mathcal O \left( \hat u_1 \hat v_1^2, \eps \hat u_1 \hat v_1,\eps \hat u_2 \hat v_2 \right), \\ 
    \hat P_2 &= -\tfrac{5a+2a^2-p}{2} \hat u_1^2 \hat u_2+\tfrac{5a+2a^2-p}{2}\hat u_2^3 + \mathcal O \left( \hat u_2 \hat v_1^2,\hat u_1\hat v_2^2,\hat u_2 \hat v_2^2,\eps \hat u_2 \hat v_1,\eps \hat u_1 \hat v_2 \right),  \\
    \hat Q_1 &= \tfrac{4b+a(\lambda_1+\lambda_2)}{4} \hat u_2^2 + \mathcal O \left( \hat u_1 \hat v_1,\hat u_2 \hat v_1, \hat u_1 \hat v_2, \hat u_2 \hat v_2, \eps^2 \right),  \\ 
    \hat Q_2 &= \mathcal O \left( \hat u_2 \hat v_1,\hat u_1 \hat v_2 \right).
  \end{split}
\end{equation*}

\medskip

\begin{proposition}
For the system \eqref{eq:nfsymasym}, there exists an attracting 4D center manifold of the origin given by 
\[ W^c = \left\{ (u_1,v_1,u_2,v_2,\eps) : 
u_1 = h(u_2,v_1,v_2,\eps)
\right\}, \]
where the function $h$ is given by 
\[ h = \tfrac{1}{2}u_2^2 -\tfrac{4a+1+\lambda_1}{2}v_1^2 - \tfrac{4a+\lambda_1}{8} v_2^2+\tfrac{4a+\lambda_1}{4} u_2 v_2 + \tfrac{2+\lambda_1}{4} v_1 v_2 +\tfrac{4a+4k-\lambda_2}{4} \eps v_1 - \tfrac{2+\lambda_1}{8} \eps v_2 + \mathcal O \left( \eps^2,(u_2+v_1+v_2)^3 \right). \]
Moreover, the reduced dynamics on the center manifold are given by
\begin{equation} \label{eq:dynamicsonWc}
  \begin{split}
    \dot u_2 &= v_2 - \lambda_1 u_2 v_1 + \gamma_1 u_2 v_1^2 + \gamma_2 u_2^3 + \gamma_3 \eps z + \mathcal O \left( u_2^2 v_2, u_2 v_2^2  \right) + \eps \mathcal O \left( u_2 v_1, u_2 v_2, v_1 v_2 \right), \\
    \dot v_1 &= \eps \left[ 1 + \tfrac{\lambda_1+\lambda_2}{2} v_1 + \alpha_1 v_1^2 + \alpha_2 u_2 v_1 + \alpha_3 u_2^2 + \mathcal O \left( u_2 v_2, v_2^2, \eps^2 \right) \right], \\
    \dot v_2 &= \eps \left[ (\lambda_1+\lambda_2) u_2 + \beta u_2 v_1 + \mathcal O \left( v_1 v_2, \eps^2 \right) \right],
  \end{split}
\end{equation}
where the coefficients $(\alpha_1,\alpha_2,\alpha_3,\beta,\gamma_1,\gamma_2,\gamma_3)$ can be computed in terms of the coefficients of \eqref{eq:normalform}.
\end{proposition}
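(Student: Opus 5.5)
We prove this with the standard center manifold theorem applied to \eqref{eq:nfsymasym}, which is the extended system with $\eps$ appended as a trivial variable. Throughout, we drop hats: $(u_1,u_2,v_1,v_2)$ denote $(\hat u_1,\hat u_2,\hat v_1,\hat v_2)$.

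\textbf{Step 1: linearization and existence.} At the origin $(u_1,u_2,v_1,v_2,\eps)=0$ the Jacobian has a single nonzero eigenvalue, $-2$, in the $u_1$-direction, coming from the term $-2u_1$. The remaining four eigenvalues, for $u_2,v_1,v_2,\eps$, are zero. The linear part is not diagonal: $v_1$ feeds into $\dot u_1$ and $v_2$ feeds into $\dot u_2$. So the $u_1$-direction is not exactly the stable eigenvector. The generalized eigenspace for eigenvalue $0$ is spanned by $u_2$, $v_2$, $\eps$, and the vector $(u_1,v_1)=(1/2,1)$, which accounts for the linear term $\tfrac12 v_1$ in $h$. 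The stable eigenspace is one-dimensional.

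The center manifold theorem, in its local version with a smooth cutoff, then gives a 4D $C^r$ center manifold that is locally a graph $u_1=h(u_2,v_1,v_2,\eps)$. It is tangent to the center subspace. Because the only non-center eigenvalue is $-2<0$, the manifold is attracting: it is exponentially attracting with rate close to $-2$, and it contains all solutions that stay near the origin.

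Two structural facts help with the expansion.
\begin{itemize}
\item The $\mathbb Z_2$ symmetry $\mathcal R$ acts as $(u_2,v_2)\mapsto(-u_2,-v_2)$ with $u_1,v_1$ fixed. Hence $h$ may be chosen even in $(u_2,v_2)$. This forbids terms such as $u_2v_1$ and $v_2v_1$ unless they are paired with another odd factor.
\item Since $\eps$ is a parameter, $h$ restricted to $\eps=0$ is simply the graph of $S$ near the origin. This provides a consistency check against $v_{1S}$.
\end{itemize}

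\textbf{Step 2: computing $h$.} Substitute the ansatz $h=\tfrac12 v_1+\sum c_{ijkl}u_2^iv_1^jv_2^k\eps^l$, over quadratic and low cubic monomials, into the invariance equation
\[ Dh\cdot(\dot u_2,\dot v_1,\dot v_2)=\dot u_1\big|_{u_1=h}. \]
Then match coefficients order by order.

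At $\eps=0$ this reduces to solving $\dot u_1=0$ for $u_1$, using the implicit function theorem with $\partial_{u_1}(\dot u_1)=-2\neq0$. This gives the $u_2^2$, $v_1^2$, $v_2^2$, $u_2v_2$ and $v_1v_2$ coefficients, since $\dot v=0$ there.

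The $\eps v_1$ and $\eps v_2$ terms come from the left-hand side. Note that $\partial_{v_1}h\cdot\eps[1+\cdots]$ produces an $O(\eps)$ constant forcing. Consequently $h$ acquires a term proportional to $\eps$, and matching its products with the linear terms of $\dot v$ and with $k\eps u_1$ gives the stated $\eps$-coefficients.

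This is routine but delicate bookkeeping. Sign and factor conventions also change between hatted and unhatted variables, so the printed $h$ must be reconciled with these normalizations, including the rescaling of the $v$'s.

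\textbf{Step 3: reduced dynamics on $W^c$.} Substitute $u_1=h$ into the $\dot u_2$, $\dot v_1$ and $\dot v_2$ equations.

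The leading terms follow directly:
\begin{itemize}
\item In $\dot u_2$, the term $2u_1u_2$ contributes $u_2v_1$. Combined with $-\tfrac{2+\lambda_1}{2}u_2v_1$, and after a rescaling of $v_1$ by a factor absorbing the $\tfrac12$, this yields $-\lambda_1u_2v_1$.
\item In $\dot v_2$, we get $\tfrac{\lambda_1+\lambda_2}{2}u_2$, which becomes $(\lambda_1+\lambda_2)u_2$ after the same rescaling.
\item In $\dot v_1$, the linear term in $u_1$ becomes linear in $v_1$.
\end{itemize}
The cubic coefficients $\gamma_1$ and $\gamma_2$, and the coefficients $\alpha_i$ and $\beta$, are then collected from the quadratic part of $h$ together with $\hat P_2$, $\hat Q_1$ and $\hat Q_2$. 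Parity under the symmetry again explains which monomials appear.

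\textbf{Main obstacle.} The main obstacle is the bookkeeping in Step 3. We must verify that no lower-order terms, such as $u_2^2$ in $\dot u_2$ or an isolated $v_1$ in $\dot v_2$, survive. Parity rules these out. We must also track the linear rescaling needed to obtain the clean coefficients $1$, $-\lambda_1$ and $\lambda_1+\lambda_2$. I would therefore fix that rescaling first, from the linear and quadratic terms alone, before computing any coefficient of the cubic and $\eps$-dependent terms.
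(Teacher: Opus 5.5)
Your architecture is the paper's: spectrum $\{-2,0,0,0,0\}$, center manifold theorem, invariance equation solved by homogeneous polynomials, then substitution. The paper merely aligns the center subspace with the axes first, via $\tilde u_1=\hat u_1-\tfrac12\hat v_1+\tfrac14\eps$, $\tilde v_1=\tfrac12\hat v_1-\tfrac14\eps$, $\tilde\eps=\tfrac12\eps$. That is why the printed $h$ has no linear part. Two small corrections there:
\begin{itemize}
\item The pure $\eps$-axis is not in the center subspace. The generalized kernel is $\{4u_1-2v_1+\eps=0\}$, so $-\tfrac14\eps$ belongs to the linear part of $h$ from the start.
\item Halving $v_1$ forces $\eps\mapsto\tfrac12\eps$ (to keep $\dot v_1=\eps[1+\cdots]$). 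It is this $\eps$ rescaling, not the $v_1$ rescaling, that turns $\tfrac{\lambda_1+\lambda_2}{2}u_2$ into $(\lambda_1+\lambda_2)u_2$ in $\dot v_2$.
\end{itemize}

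The step that genuinely fails is your shortcut at $\eps=0$. There the invariance equation does not reduce to $\dot u_1|_{u_1=h}=0$. The slice $W^c\cap\{\eps=0\}$ is three-dimensional and only contains the two-dimensional critical manifold, so it is not ``the graph of $S$''. Off $S$ one has $\dot u_2=v_2+\cdots\neq 0$, so $\partial_{u_2}h\,\dot u_2$ survives at quadratic order. Let $A,D,C$ be the coefficients of $u_2^2,u_2v_2,v_2^2$ in $h$. The correct matching is $0=1-2A$, $2A=-2D+\tfrac{4a+2+\lambda_1}{2}$, $D=-2C$, which gives the stated $D=\tfrac{4a+\lambda_1}{4}$ and $C=-\tfrac{4a+\lambda_1}{8}$. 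Solving $\dot u_1=0$ instead returns the $u_1$-nullcline, with $D=\tfrac{4a+2+\lambda_1}{4}$ and $C=0$. Your check against $v_{1S}$ cannot detect this, since $v_2$ is quadratically small on $S$.

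Your parity argument also collides with the formula you are aiming for. An $\mathcal R$-even $h$ cannot contain $\tfrac{2+\lambda_1}{4}v_1v_2$ or $-\tfrac{2+\lambda_1}{8}\eps v_2$, and no renormalization changes that.

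In the paper's computation, these two terms come from the term $\tfrac{2+\lambda_1}{2}\hat u_1\hat v_2$, which \eqref{eq:nfsymasym} places in the $\hat u_1$-equation. It appears in the paper's invariance equation as $\tfrac{2+\lambda_1}{2}v_2(h+\cdots+v_1)$. This forces the $v_1v_2$-coefficient $\tfrac{2+\lambda_1}{4}$ and then the $\eps v_2$-coefficient $-\tfrac{2+\lambda_1}{8}$. However, transforming \eqref{eq:normalform} directly gives $\tfrac12(\dot u_1-\dot u_2)\ni\tfrac{2+\lambda_1}{2}\hat u_1\hat v_2$ and no such term in $\tfrac12(\dot u_1+\dot u_2)$. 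So the term belongs to the $\hat u_2$-equation, where it is equivariant.

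You must therefore choose one of two routes:
\begin{itemize}
\item Work with \eqref{eq:nfsymasym} as printed. Then drop the parity pruning, and you reproduce the printed $h$.
\item Work with the equivariant system. Then parity is valid and those two coefficients vanish. The reduced $\dot u_2$ instead acquires a term $\tfrac{2+\lambda_1}{2}v_1v_2$ not multiplied by $\eps$, which is absent from \eqref{eq:dynamicsonWc}. Under the blow-up weights this term is $\mathcal O(r^3)$ and so harmless later, but it contradicts your claim that parity leaves only the displayed terms.
\end{itemize}
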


\begin{proof}
The fixed point at the origin of the extended system \eqref{eq:nfsymasym} has stable spectrum $\sigma_s = \{ -2 \}$ and center spectrum $\sigma_c = \{ 0,0,0,0 \}$, with corresponding stable and center subspaces, respectively, given by
\[ \mathbb E^s(0) = \operatorname{span} \begin{bmatrix} 1 \\ 0 \\ 0 \\ 0 \\ 0 \end{bmatrix} \quad {\rm and } \quad \mathbb E^c(0) = \operatorname{span}  \left\{ 
\begin{bmatrix} 1 \\ 0 \\ 2 \\ 0 \\ 0 \end{bmatrix} , 
\begin{bmatrix} 0 \\ 1 \\ 0 \\ 0 \\ 0 \end{bmatrix} , 
\begin{bmatrix} 0 \\ 0 \\ 0 \\ 1 \\ 0 \end{bmatrix} ,
\begin{bmatrix} 0 \\ 0 \\ \tfrac{1}{2} \\ 0 \\ 1 \end{bmatrix} 
\right\}. \]
We align the center directions with the coordinate axes via the change of coordinates
\[ \widetilde u_1 = \hat u_1-\tfrac{1}{2} \hat v_1 + \tfrac{1}{4} \eps, \quad \widetilde v_1 = \tfrac{1}{2}\hat v_1 - \tfrac{1}{4} \eps, \quad \widetilde u_2 = \hat u_2, \quad \widetilde v_2 = \hat v_2, \quad \text{ and } \quad \widetilde \eps = \tfrac{1}{2} \eps. \]
Transforming and dropping tildes, the linear part of the vector field has coefficient matrix
%
\[ Df = \begin{bmatrix} -2 & 0 & 0 & 0 & 0 \\ 0 & 0 & 0 & 0 & 1 \\ 0 & 0 & 0 & 1 & 0 \\ 0 & 0 & 0 & 0 & 0 \\ 0 & 0 & 0 & 0 & 0 \end{bmatrix}. \]
%
By the Center Manifold Theorem, there exists a 4D center manifold, $W^c$, of the origin given by 
\[ W^c = \left\{ (u_1,u_2,v_1,v_2,\eps) : u_1 = h(u_2,v_1,v_2,\eps) \right\} \]
where the function $h$ satisfies the invariance equation 
\begin{equation*} 
  \begin{split}
\tfrac{\partial h}{\partial u_2} \dot u_2 + \tfrac{\partial h}{\partial v_1} \dot v_1 + \tfrac{\partial h}{\partial v_2} \dot v_2 = -2h &+ h^2+u_2^2-(1+\lambda_1+4a)v_1^2-(4a+\lambda_1) h v_1  \\ 
& + \tfrac{2+\lambda_1}{2} v_2 \left( h+\tfrac{4a+2+\lambda_1}{2+\lambda_1}u_2 + v_1 \right) + \tfrac{4k-(2+\lambda_1+4a+\lambda_1+\lambda_2)}{2} \eps (h+v_1) + H,
  \end{split}
\end{equation*}
and $H$ denotes the higher-order terms.
By expanding $h$ as a power series of the form $\displaystyle h = \sum_k h_k$, where $h_k$ is a homogeneous polynomial of degree $k$ in the variables $(u_2,v_1,v_2,\eps)$, and equating coefficients of like terms, we obtain the function $h$ given in the statement of the proposition. 
%
%
The reduced dynamics on the center manifold are then obtained by substituting $u_1 = h$ into the ODEs and relabelling the coefficients.
\end{proof}

The reduced flow \eqref{eq:dynamicsonWc} on the center manifold is a slow/fast system in standard form. The critical manifold in this case is a hyperbolic paraboloid to leading order,
\[ S = \left\{ v_2 = \lambda_1 u_2 v_1 - \gamma_2 u_2^3 + \mathcal{O} \left( u_2 v_1^2 \right) \right\}. \]
The Jacobian of the layer problem of \eqref{eq:dynamicsonWc} is 
\[ \tfrac{\partial f}{\partial u_2} = -\lambda_1 v_1  + 3\gamma_2 u_2^2 + \mathcal O(v_1^2). \]
Hence, to leading order, the critical manifold has attracting and repelling sheets, $S_a$ and $S_r$, given by 
\begin{equation*}
  \begin{split}
    S_a &= \left\{ (u_2,v_1,v_2) \in S : v_1 < \tfrac{3\gamma_2}{\lambda_1} u_2^2 + \mathcal O(u_2^4) \right\}, \\ 
    S_r &= \left\{ (u_2,v_1,v_2) \in S : v_1 > \tfrac{3\gamma_2}{\lambda_1} u_2^2 + \mathcal O(u_2^4) \right\}.
  \end{split}
\end{equation*}
Moreover, the fold curve is given by 
\[ S_L = \left\{ (u_2,v_1,v_2) \in S : v_1 = \tfrac{3\gamma_2}{\lambda_1} u_2^2 + \mathcal O(u_2^3) \right\}. \]
Thus, the fold curve is tangent to the $u_2$-axis, which reflects the fact that the fold curve is tangent to the anti-symmetry axis $\mathcal L_{as}$ in the original coordinates \eqref{eq:normalform}.

The desingularized reduced system of \eqref{eq:dynamicsonWc} is 
\begin{equation}
  \begin{split}
    \dot u_2 &= \lambda_2 u_2 + \tfrac{2\beta+4\gamma_1 - \lambda_1(\lambda_1+\lambda_2)}{2} u_2 v_1 + \mathcal O\left( (u_2+v_1)^3 \right), \\
    \dot v_1 &= \lambda_1 v_1 -3\gamma_2 u_2^2 + \tfrac{\lambda_1(\lambda_1+\lambda_2)-2\gamma_1}{2} v_1^2 + \mathcal O\left( (u_2+v_1)^3 \right).
  \end{split}
\end{equation}
As expected, the origin is a SFN with eigenvalues $\lambda_1$ and $\lambda_2$. The eigenvector associated with the eigenvalue $\lambda_1$ is tangent to the $v_1$-axis and the eigenvector associated with the eigenvalue $\lambda_2$ is tangent to the $u_2$-axis. 
The critical manifold and slow flow on it are shown in Fig.~\ref{fig:hyperbolicparaboloid}.
%
%

\begin{figure}[h!]
  \centering
  \includegraphics[width=5in]{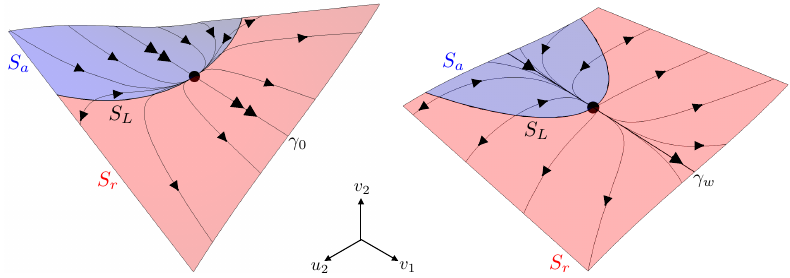}
  \caption{Critical manifold (blue and red surfaces) and reduced flow (black curves with arrows) for the system restricted to the center manifold \eqref{eq:dynamicsonWc} for the (a) strong SFN and (b) weak SFN. The only singular canard solution (the strong canard $\gamma_0$ in (a) and the weak canard $\gamma_w$ in (b)) is aligned with the $v_1$-axis, which corresponds to the axis of symmetry. The funnel of the SFN is (locally) the entire attracting manifold $S_a$.}
  \label{fig:hyperbolicparaboloid}
\end{figure}

From our analysis of the singular limit, we have established that the SFN has significant differences from the classical theory of folded nodes (such as the presence of only one singular canard solution). To determine how these differences in the singular limit translate to the full system dynamics (i.e., for $\eps$ sufficiently small and positive), we now turn our attention to the rigorous analysis of the SFN. See also \cite{Kristiansen2023}.

\section{Geometric Blow-Up}  \label{sec:blowup}
To analyse the dynamics around the symmetric folded node, we use the blow-up technique \cite{Dumortier1996,Krupa2001}, which inflates the SFN to a topological hemisphere. In so doing, the solution curves that approach the SFN asymptotically with differing orders of tangency are teased apart, and enough hyperbolicity is restored that the flow on the hemisphere can be analysed using standard dynamical systems techniques and invariant manifold theory. 

As is usual with the blow-up analysis of folded singularities \cite{Szmolyan2001}, we consider the extended system
\begin{equation} \label{eq:reducednormalform}
  \begin{split}
    \dot x &= \eps \left[ 1+\tfrac{\lambda_1+\lambda_2}{2} x + \alpha_1 x^2 + \alpha_2 x z + \alpha_3 z^2 + \mathcal O(y z, y^2,\eps^2)  \right] ,\\ 
    \dot y &= \eps \left[ (\lambda_1+\lambda_2) z + \beta x z + \mathcal O(xy,\eps^2) \right], \\ 
    \dot z &= y - \lambda_1 x z + \gamma_1 x^2 z + \gamma_2 z^3 + \gamma_3 \eps z + \mathcal O(yz^2,y^2z) + \eps \mathcal O \left( x z, yz, xy \right), \\
    \dot \eps &= 0,
  \end{split}
\end{equation}
in which $\eps$ is included as a dynamic variable, and the mapping between systems \eqref{eq:reducednormalform} and \eqref{eq:dynamicsonWc} is 
\[ (x,y,z) \mapsto (v_1,v_2,u_2). \]
The SFN at the origin of \eqref{eq:reducednormalform} is a nilpotent equilibrium with quadruple zero eigenvalue. To analyse the dynamics near this equilibrium, we introduce the blow-up transformation
\begin{equation} \label{eq:blowup}
  \begin{split}
    x = r \overline x, \quad y = r^2 \overline y, \quad z = r \overline z, \quad \text{ and } \quad \eps = r^2 \overline \eps,
  \end{split}
\end{equation}
where $(\overline x,\overline y,\overline z,\overline \eps) \in \mathbb S^3$, i.e., $\overline x^2+\overline y^2+\overline z^2+\overline \eps^2 = 1$, and $r \in [0,r_0]$, with $r_0$ sufficiently small, is a dynamic variable that measures the distance from the 3-sphere $\mathbb S^3$.

For the analysis of canard solutions, we focus on the following three overlapping coordinate charts:
\begin{itemize}
  \item The entry chart, $K_1 := \left\{ \overline x = -1 \right\}$, describes solutions entering the neighbourhood of the SFN.
  \item The central (or rescaling) chart, $K_2 := \left\{ \overline \eps = 1 \right\}$, tracks solutions over the surface of the hemisphere.  
  \item The exit chart, $K_3 := \left\{ \overline x = 1 \right\}$, describes solutions leaving the neighbourhood of the SFN.
\end{itemize}
Following convention, we use $\square_i$ to denote the blown-up variable $\overline \square$ in chart $K_i$. 
Since the analysis of canards of folded nodes is well-established, we only sketch an outline of the analysis here whilst highlighting the key differences between SFNs and classical folded nodes. 
The blown-up hemisphere, together with the leading order dynamics on it, is shown in Fig.~\ref{fig:blowupschematic}. 

\begin{figure}[h!]
  \centering
  \includegraphics[width=5in]{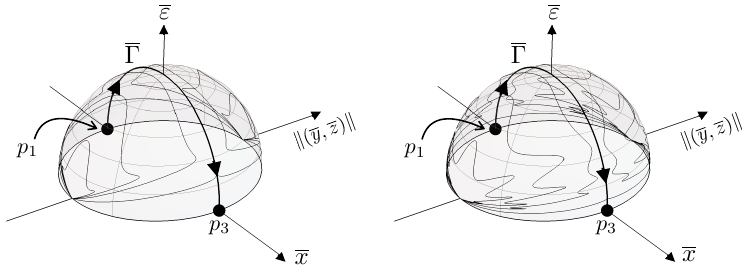}
  \put(-360,120){(a)}
  \put(-180,120){(b)}
  \caption{Dynamics of the symmetric folded node $\mathbb S^3$ in the blown-up coordinates $\left(\overline x, \pm \sqrt{\overline y^2 + \overline z^2}, \overline \eps \right)$ for the (a) strong case and (b) weak case. There is a special solution $\overline \Gamma$ (Section~\ref{subsec:centralchart}) corresponding to the $\overline x$-axis that connects an equilibrium point $p_1$ on the equator of the hemisphere in chart $K_1$ (Section~\ref{subsec:entrychart}) to another equilibrium point $p_3$ on the equator of the hemisphere in chart $K_3$ (Section~\ref{subsec:exitchart}). Solutions (thin black curves) exhibit oscillatory dynamics.}
  \label{fig:blowupschematic}
\end{figure}

\begin{remark}
In the case of FitzHugh-Nagumo oscillators with linear, repulsive coupling \cite{Kristiansen2023}, the weights of the blow-up are different from those used in \eqref{eq:blowup}. This is due to the fact that their system has strong linear coupling instead of strong nonlinear coupling.
\end{remark}

\subsection{Dynamics in the central chart} \label{subsec:centralchart}
We begin our analysis with the dynamics in the central chart, $K_2 = \{ \overline \eps =1\}$, which corresponds to the top of the hemisphere in Fig.~\ref{fig:blowupschematic}.
The blow-up transformation in chart $K_2$ is 
\[ x = r_2 x_2, \quad y = r_2^2 y_2, \quad z = r_2 z_2, \quad \text{ and } \quad \eps = r_2^2, \]
which is an $\eps$-dependent rescaling of the vector field. 
Transforming \eqref{eq:reducednormalform} and desingularizing by a factor of $r_2$ (i.e., setting $dt_2 = r_2 dt$) gives
\begin{equation} \label{eq:blownupK2}
  \begin{split}
    \dot x_2 &= 1+r_2 F_2 (x_2,y_2,z_2,r_2), \\ 
    \dot y_2 &= (\lambda_1+\lambda_2) z_2 + r_2 G_2(x_2,y_2,z_2,r_2), \\ 
    \dot z_2 &= y_2 - \lambda_1 x_2 z_2 + r_2 H_2(x_2,y_2,z_2,r_2),
  \end{split}
\end{equation}
where the overdot now denotes the derivative with respect to $t_2$, and the functions $F_2, G_2,$ and $H_2$ are
\begin{equation*}
  \begin{split}
    F_2(x_2,y_2,z_2,r_2) &= \tfrac{\lambda_1+\lambda_2}{2} x_2 + r_2 (\alpha_1 x_2^2 + \alpha_2 x_2 z_2 + \alpha_3 z_2^2) + r_2^2 \mathcal O(y_2 z_2, r_2), \\ 
    G_2(x_2,y_2,z_2,r_2) &= \beta x_2 z_2 + r_2 \mathcal O(x_2 y_2,r_2) ,\\ 
    H_2(x_2,y_2,z_2,r_2) &= \gamma_1 x_2 z_2 + \gamma_2 z_2^3 + \gamma_3 z_2 +r_2 \mathcal O(x_2 z_2, y_2 z_2^2, r_2x_2y_2,r_2 y_2 z_2).
  \end{split}
\end{equation*}
Thus, in the coordinate chart $K_2$, the blow-up transformation converts the singular perturbation problem \eqref{eq:reducednormalform} into a regular perturbation problem \eqref{eq:blownupK2}. 

The unperturbed problem, obtained by setting $r_2 = 0$ in \eqref{eq:blownupK2}, is given by 
\begin{equation} \label{eq:unperturbed}
  \begin{split}
    \dot x_2 &= 1, \\ 
    \dot y_2 &= (\lambda_1+\lambda_2) z_2, \\ 
    \dot z_2 &= y_2 - \lambda_1 x_2 z_2.
  \end{split}
\end{equation}
Dividing by the $x_2$-equation, we obtain a linear non-autonomous system, which can be solved explicitly. 
%
%
(The general solution is a linear combination of Hermite functions and generalized hypergeometric functions.)
From this, we determine that the system \eqref{eq:unperturbed} possesses exactly one algebraic solution 
\[ \Gamma_2 = \left\{ (x_2,y_2,z_2,r_2) = \left( t_2, 0, 0, 0 \right)  \right\}, \]
which reflects the fact that there is only ever one singular canard. 
If the SFN is strong, i.e., $\lambda_1<\lambda_2<0$, then $\Gamma_2$ corresponds to the strong canard. 
If the SFN is weak, i.e., $\lambda_2<\lambda_1<0$, then $\Gamma_2$ corresponds to the weak canard. 

\medskip

\begin{remark}
In the classical theory of folded nodes \cite{Szmolyan2001,Wex2005}, there are two explicit algebraic solutions for the unperturbed problem in the central chart of the blow-up. These algebraic solutions correspond to the strong and weak canards of the folded node. 
\end{remark}

\medskip

\begin{proposition} \label{prop:rotations}
For the normal form \eqref{eq:normalform}, if the symmetric folded node is
\begin{enumerate}[label=(\roman*)]
  \item Strong ($\lambda_1<\lambda_2<0$): there exists a maximal canard solution, $\gamma_0^{\eps}$, that lies along the axis of symmetry and corresponds to the singular strong canard. Moreover, the slow manifolds $S_{a,\eps}$ and $S_{s,\eps}$ of \eqref{eq:normalform} can exhibit up to two twists about $\gamma_0^\varepsilon$ in a neighbourhood of the fold curve. 
  \item Weak ($\lambda_2<\lambda_1<0$): there exists a maximal canard solution, $\gamma_w^{\eps}$, that lies along the axis of symmetry and corresponds to the singular weak canard. Moreover, if $n-1 < \tfrac{\lambda_2}{\lambda_1} < n$ for some integer $n\geq 2$, then the slow manifolds $S_{a,\eps}$ and $S_{s,\eps}$ of \eqref{eq:normalform} twist up to $n+1$ times about $\gamma_w^{\eps}$ in a neighbourhood of the fold curve.
\end{enumerate}
\end{proposition}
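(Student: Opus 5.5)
The plan has two parts: first produce the maximal canard from the $\mathbb Z_2$ symmetry, then count twists from the variational equation along it in the central chart $K_2$.

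\textbf{Existence of the canard.} In the reduced normal form \eqref{eq:reducednormalform}, the axis of symmetry $\mathcal L_s$ corresponds to the set $\{y=z=0\}$, since $y$ and $z$ are the anti-symmetric coordinates $(\hat v_2,\hat u_2)$. By the equivariance \eqref{eq:symmetry}, this set is invariant for every $\eps\ge 0$; this is also visible from the fact that every term in $\dot y$ and $\dot z$ contains a factor $y$ or $z$. On it the dynamics reduce to $\dot x=\eps[1+\mathcal O(x)]$, so there is a unique trajectory $\gamma^\eps$ moving monotonically in $x$.

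The linearized normal dynamics along it are governed by $\dot z=y-\lambda_1 xz+\dots$ and $\dot y=\eps(\lambda_1+\lambda_2)z+\dots$. Hence $\gamma^\eps$ is normally attracting for $x<0$ and normally repelling for $x>0$, because $\lambda_1<0$. I would choose the Fenichel slow manifolds $S_{a,\eps}$ and $S_{s,\eps}$ to be $\mathcal R$-invariant, which is possible by applying Fenichel theory to the symmetric system and symmetrizing the choice of boundary data. Both then contain the unique invariant curve in $\mathcal L_s$ that is asymptotic to $S_a$ (respectively $S_s$). Since $\gamma^\eps$ is this curve for both, the slow manifolds intersect along $\gamma^\eps$, which is therefore a maximal canard.

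In chart $K_2$ this canard is exactly the perturbation of $\Gamma_2$. This identifies it as the strong canard $\gamma_0^\eps$ when $\lambda_1<\lambda_2$, and as the weak canard $\gamma_w^\eps$ when $\lambda_2<\lambda_1$, by Section~\ref{subsec:normalformGSPT}.

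\textbf{Rotation count.} The twisting of $S_{a,\eps}$ about $\gamma^\eps$ is measured by the rotation of its tangent plane in the normal $(y,z)$ directions. In $K_2$ with $r_2=0$, this rotation is given by the variational equation of \eqref{eq:unperturbed} along $\Gamma_2$. Differentiating $\dot z_2$ and substituting $\dot y_2$ gives
\[ z_2''+\lambda_1 x_2 z_2'-\lambda_2 z_2=0. \]
The substitution $z_2=e^{-\lambda_1 x_2^2/4}w$ followed by the rescaling $s=\sqrt{-\lambda_1}\,x_2$ turns this into a Weber (parabolic cylinder) equation whose index $\nu$ is an affine function of $\mu=\lambda_2/\lambda_1$. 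I would then do two things:
\begin{enumerate}[label=(\alph*)]
\item identify the solution that is subdominant as $x_2\to-\infty$, since $S_{a,\eps}$ is tracked from chart $K_1$ near the equilibrium $p_1$ and corresponds to the attracting solution;
\item count its real zeros using Sturm comparison and the classical zero count of $D_\nu$, which is about $\lfloor\nu\rfloor+1$.
\end{enumerate}
Each zero of $z_2$ is a half rotation of the tangent direction about $\Gamma_2$, i.e.\ one twist. For $0<\mu<1$ (the strong case) this should give at most two twists. For $n-1<\mu<n$ (the weak case) it should give at most $n+1$ twists.

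\textbf{Completing the argument.} I would follow the standard folded node argument of \cite{Szmolyan2001,Wex2005}:
\begin{enumerate}[label=(\alph*)]
\item In the entry chart $K_1$, analyse the equilibria on the equator near $p_1$ and extend $S_a$ into the blow-up as a centre-type manifold attached to $p_1$.
\item Do the same with $S_s$ in the exit chart $K_3$, attached to $p_3$.
\item Use regular perturbation in $r_2$ in $K_2$, so that the zero counts persist for $r_2$ small (i.e.\ $\eps$ small), with all zeros simple.
\end{enumerate}

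\textbf{Main obstacle.} I expect the hardest step to be the matching: showing that the asymptotic behaviour of the Weber solutions as $x_2\to\pm\infty$ is compatible with the invariant manifolds coming from $K_1$ and $K_3$, so that no further zeros are created or lost outside a compact $x_2$-interval. My first approach would be the asymptotics of $D_\nu$, which are monotone for large $|s|$, combined with hyperbolicity of $p_1$ and $p_3$ in the normal directions. This would bound the number of twists by the zero count established on the compact interval.
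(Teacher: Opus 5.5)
Your symmetry argument for the primary canard is sound, and it is a cleaner route than the paper's. Here $\mathcal R$ acts as $(x,y,z)\mapsto(x,-y,-z)$, so $\mathcal R$-invariant slow manifolds $z=h(x,y)$ are odd in $y$, and both therefore contain the single orbit $\{y=z=0\}$. The paper instead gets the canard from $\Gamma_2\subset W^c(S_1)\cap W^c(S_3)$ in the blow-up, plus the classical persistence argument.

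The gap is in the twist count. Your Weber index is exactly $\nu=\mu$. In the paper's scaling $x_2=(-\lambda_1/2)^{-1/2}\tau_2$, your equation reads $z_2''-2\tau_2 z_2'+2\mu z_2=0$. This is precisely the classical folded-node equation in the remark following the paper's proof. The solution that is algebraic as $x_2\to-\infty$ (the one tangent to $S_{a,\varepsilon}$) is, up to a constant, $z_2=e^{s^2/4}D_\mu(-s)$. It has $\lfloor\mu\rfloor+1$ real zeros: one for $0<\mu<1$, and $n$ for $n-1<\mu<n$. With your convention that each zero of $z_2$ is one twist, the plan gives at most one twist in the strong case and at most $n$ in the weak case. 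That is one fewer than the proposition. So ``at most two'' and ``at most $n+1$'' do not follow from what you set up, and in particular you could not show that two (resp.\ $n+1$) twists actually occur.

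The paper's count comes from the other component. Its Riccati substitution $w_2=z_2/y_2=\psi_2'/((\lambda_1+\lambda_2)\psi_2)$ just means $\psi_2\propto y_2$, because $y_2'=(\lambda_1+\lambda_2)z_2$. Then $y_2$ satisfies \eqref{eq:weber}, whose index is $1+\mu$. The paper bounds the zeros of all nontrivial solutions by three (resp.\ $n+2$) and converts using ``two consecutive zeros $=$ one twist''. Since $z_2\propto y_2'$, the two sets of zeros interlace. Along the solution tangent to $S_{a,\varepsilon}$, the component $y_2$ has $\lfloor\mu\rfloor+2$ zeros, i.e.\ $2$ resp.\ $n+1$. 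Geometrically, the tangent direction turns from the $y_2$-axis at $x_2=-\infty$ to the $z_2$-axis at $x_2=+\infty$, through a total angle $(\lfloor\mu\rfloor+1)\pi+\pi/2$. The proposition's numbers count crossings of the $z_2$-axis, not of the $y_2$-axis.

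To repair the proposal, adopt that convention: count zeros of $y_2$, or run the Sturm argument on \eqref{eq:weber} as the paper does, and use it consistently. Note that the escape side after the passage is decided by the sign of $z_2$, so the parity rule for the escape direction depends on which convention you adopt.
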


\begin{proof}
The details of this proof are similar to Lemma 4.4 of \cite{Szmolyan2001}, Section~2.1 of \cite{Mitry2017}, and Section~3 of \cite{Kristiansen2023}. As such, we only sketch an outline of the proof here to highlight the key differences between the SFN and classical folded nodes.

Using $x_2$ as the time variable, the blown-up vector field \eqref{eq:blownupK2} is equivalent to the non-autonomous system
\begin{equation*}
  \begin{split}
    \begin{bmatrix} y_2^\prime \\ z_2^\prime \end{bmatrix} = \begin{bmatrix} 0 & \lambda_1+\lambda_2 \\ 1 & -\lambda_1 x_2 \end{bmatrix} \begin{bmatrix} y_2 \\ z_2 \end{bmatrix} + r_2 \begin{bmatrix} G_2 - (\lambda_1+\lambda_2) z_2 F_2 \\ H_2 - (y_2 - \lambda_1 x_2 z_2) F_2 \end{bmatrix} + \mathcal O(r_2^2),
  \end{split}
\end{equation*}
where the prime denotes the derivative with respect to $x_2$.
The variational equation along $\Gamma_2$ is 
\begin{equation} \label{eq:variationalproof}
  \begin{split}
    \begin{bmatrix} y_2^\prime \\ z_2^\prime \end{bmatrix} = \begin{bmatrix} 0 & \lambda_1+\lambda_2 \\ 1 & -\lambda_1 x_2 \end{bmatrix} \begin{bmatrix} y_2 \\ z_2 \end{bmatrix},
  \end{split}
\end{equation}
which is equivalent to the unperturbed problem \eqref{eq:unperturbed}. 

To measure rotation in the $(y_2,z_2)$-space, we introduce the affine coordinate $w_2 = \frac{z_2}{y_2}$. The variational equation then transforms to 
\[ w_2^\prime = 1-\lambda_1 x_2 w_2 - (\lambda_1+\lambda_2) w_2^2. \]
This Riccati equation can be transformed into a second-order linear Weber equation by first making the coordinate transformation
\[ w_2(x_2) = \frac{1}{\lambda_1+\lambda_2} \frac{\psi_2^{\prime}(x_2)}{\psi_2(x_2)}, \]
followed by the rescaling $x_2 = \left( -\tfrac{1}{2} \lambda_1 \right)^{-1/2} \tau_2$.
The resulting Weber equation is 
\begin{equation}  \label{eq:weber}
  \begin{split}
	\psi_2^{\prime \prime} - 2 \tau_2 \psi_2^\prime + 2 \left( 1+\frac{\lambda_2}{\lambda_1} \right) \psi_2 = 0,
  \end{split}
\end{equation}
where the primes now denote derivatives with respect to $\tau_2$. For this Weber equation, there exists a pair of linearly independent solutions 
\begin{equation}  \label{eq:lisolutions}
  \begin{split}
    \psi_{2,1} &= {}_1 F_1\left( -\tfrac{1}{2}-\tfrac{1}{2} \tfrac{\lambda_2}{\lambda_1};  \tfrac{1}{2}; \tau_2^2 \right) \quad \text{ and } \quad
    \psi_{2,2} = \tau_2 \cdot  {}_1 F_1\left( -\tfrac{1}{2} \tfrac{\lambda_2}{\lambda_1}; \tfrac{3}{2};  \tau_2^2 \right),
  \end{split}
\end{equation}
where $_1 F_1(a;b;\xi)$ denotes the confluent hypergeometric function of the first kind with parameters $a$ and $b$. 
The zeros of $\psi_2$ correspond to poles of $\tfrac{z_2}{y_2}$, which is the tangent of the angle that the solution makes with the $x_2$-axis. Hence, by counting the number of zeros of $\psi_2$, we may infer the number of twists that solutions of \eqref{eq:variationalproof} exhibit. 

For the strong SFN, i.e., for $\lambda_1<\lambda_2<0$, the function $\psi_{2,1}$ has the asymptotic behaviour
\[ \psi_{2,1}(\tau_2) \sim \exp (\tau_2^2) \tau_2^{-(2+\lambda_2/\lambda_1)} \quad {\rm as } \quad \tau_2 \to \pm \infty.  \]
Moreover, the even function $\psi_{2,1}$ possesses two zeros for all $\tfrac{\lambda_2}{\lambda_1} \in (0,1)$.
Similarly, the function $\psi_{2,2}$ has the asymptotic behaviour
\[ \psi_{2,2}(\tau_2) \sim \exp(\tau_2^2) \tau_2^{-(2+\lambda_2/\lambda_1)} \quad {\rm as } \quad \tau_2 \to \pm \infty.  \]
Moreover, the odd function $\psi_{2,2}$ possesses three zeros.
Together, these results imply that for $\lambda_1<\lambda_2<0$, non-trivial solutions of \eqref{eq:weber} possess up to three zeros (Fig.~\ref{fig:weber}(a)). 
Since two consecutive zeros correspond to a twist (i.e., a 180$^{\circ}$-rotation) in the $(y_2,z_2)$ plane, it follows that non-trivial solutions of \eqref{eq:weber} twist up to two times about $\Gamma_2$. 
This completes the proof of part (i). 

\begin{figure}[h!]
   \centering
   \includegraphics[width=5in]{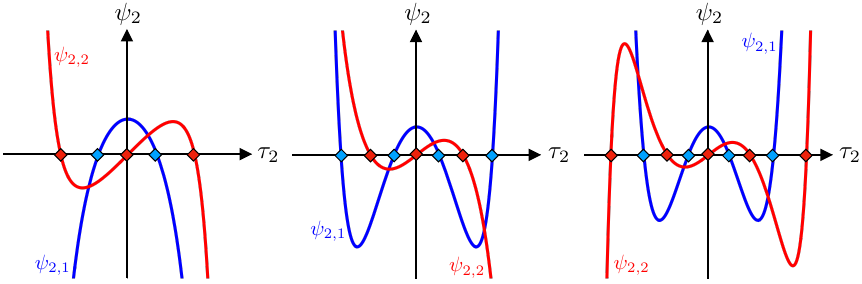}
   \put(-360,110){(a)}
   \put(-240,110){(b)}
   \put(-120,110){(c)}
   \caption{Linearly independent solutions $\psi_{2,1}$ (blue) and $\psi_{2,2}$ (red) for (a) $\mu \in (0,1)$, (b) $\mu \in (1,2)$, and (c) $\mu \in (2,3)$.}
   \label{fig:weber}
\end{figure}

For the weak SFN, i.e., for $\lambda_2 < \lambda_1 < 0$, if $\tfrac{\lambda_2}{\lambda_1}$ is an integer, then one of the solutions in \eqref{eq:lisolutions} simplifies to a Hermite polynomial. More precisely, for $\tfrac{\lambda_2}{\lambda_1} = 2k-1$ with $k \in \mathbb N \setminus \{ 1 \}$, 
\[ \psi_{2,1} = {}_1 F_1(-k,\tfrac{1}{2},\tau_2^2) = (-1)^k \frac{k!}{(2k)!} H_{2k}(\tau_2), \]
where $H_n(\tau_2)$ is a Hermite polynomial of degree $n$.
Similarly, for $\tfrac{\lambda_2}{\lambda_1} = 2k$ with $k \in \mathbb N \setminus \{ 0 \}$, 
\[ \psi_{2,2} = \tau_2 \cdot {}_1 F_1(-k,\tfrac{3}{2},\tau_2^2) = (-1)^k \frac{k!}{2 (2k+1)!} H_{2k+1}(\tau_2). \]
Since a Hermite polynomial of degree $k$ has exactly $k$ zeros, it follows that for $n-1 < \frac{\lambda_2}{\lambda_1} < n$ where $n \in \mathbb N \setminus \{0,1\}$, solutions of the Weber equation possess up to $n+2$ zeros (Fig.~\ref{fig:weber}(b) and (c)). Hence, solutions near $\Gamma_2$ possess at most $n+1$ twists. This completes the proof of part (ii).
\end{proof}

\begin{remark}
The rotational behaviour of solutions and of the slow manifolds is encoded in the Weber equation \eqref{eq:weber}. For the SFN, equation \eqref{eq:weber} has an important difference from the Weber equations in the central charts of classical folded nodes and classical folded saddles. Namely, the Weber equations of classical folded nodes \cite{Szmolyan2001,Wex2005} and folded saddles \cite{Mitry2017} are given by
\[ \psi_2^{\prime \prime} - 2 \tau_2 \psi_2^\prime + 2 \frac{\lambda_2}{\lambda_1}  \psi_2 = 0. \]
Thus, solutions near a SFN can exhibit a greater number of twists than those near a classical folded node. 
\end{remark}

\subsection{Dynamics in the entry chart $\boldsymbol{K_1}$}   \label{subsec:entrychart}
To study the unbounded branch of the special solution $\Gamma_2$ in backward time, we use the directional blow-up defined by $\overline x=-1$. The blow-up transformation is
\[ x = -r_1, \quad y = r_1^2 y_1, \quad z = r_1 z_1, \quad \text{ and } \quad \eps = r_1^2 \eps_1. \]
Transforming \eqref{eq:reducednormalform} and desingularizing by a factor of $r_1$ (i.e., setting $dt_1 = r_1 dt$) gives
\begin{equation}  \label{eq:blownupK1}
  \begin{split}
      \dot r_1 &= -r_1 \eps_1 \left( 1+ r_1 F_1 \right), \\
      \dot y_1 &= \eps_1 \left( 2y_1 + (\lambda_1+\lambda_2)z_1 + r_1  \left( G_1+2 y_1 F_1\right) \right), \\ 
      \dot z_1 &= y_1 + \lambda_1 z_1 + \eps_1 z_1 + r_1 \left(H_1+\eps_1 z_1 F_1\right), \\ 
      \dot \eps_1 &= 2 \eps_1^2 \left( 1 + r_1 F_1 \right).
  \end{split}
\end{equation}
Here, the overdot denotes the derivative with respect to $t_1$, and the functions $F_1, G_1$, and $H_1$ are given by
\begin{equation*}
  \begin{split}
      F_1(r_1,y_1,z_1,\eps_1) &= -\tfrac{\lambda_1+\lambda_2}{2} + r_1 (\alpha_1 - \alpha_2 z_1 + \alpha_3 z_1^2) + r_1^2 \mathcal O \left( y_1z_1,r_1 y_1^2, r_1 \eps_1^2 \right), \\ 
      G_1(r_1,y_1,z_1,\eps_1) &= -\beta z_1 + r_1 \mathcal O(y_1, r_1^2 \eps_1^2), \\ 
      H_1(r_1,y_1,z_1,\eps_1) &= \gamma_1 z_1 + \gamma_2 z_1^3 + \gamma_3 \eps_1 z_1 + r_1 \mathcal O \left( y_1 z_1,  \eps_1 z_1 \right).
  \end{split}    
\end{equation*}

\begin{proposition}  \label{prop:entrychart}
The 4D system \eqref{eq:blownupK1} possesses a surface $S_1 = \left\{ r_1 \geq 0, y_1 \in \mathbb R, z_1 = -\tfrac{y_1}{\lambda_1}+\mathcal O(r_1), \eps_1 = 0 \right\}$ of equilibria with stable eigenvalue given by $\lambda_1 + \mathcal O(r_1)$ and triple zero eigenvalue. Moreover, the following statements hold.
\begin{enumerate}[label=(\roman*)]
  \item There exists an attracting 3D center manifold, $W^c(S_1)$, of $S_1$. The center manifold can be expressed as a graph $z_1 = h_a(y_1,r_1,\eps_1)$ for a sufficiently small neighbourhood of $r_1=0,\eps_1=0$. The branch of $W^c(S_1)$ in $\{ r_1=0, \eps_1>0 \}$ is unique.
  \item There exists a stable invariant foliation, $\mathcal F^s$, with base $W^c(S_1)$ and 1D fibers. For any $k>\lambda_1 + \mathcal O(r_1)$, the contraction along $F^s$ during a time interval $[0,T]$ is stronger than $e^{kT}$.
\end{enumerate} 
\end{proposition}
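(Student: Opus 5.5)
Restricting \eqref{eq:blownupK1} to the invariant hyperplane $\{\eps_1=0\}$ gives $\dot r_1=0$, $\dot y_1=0$ and $\dot z_1 = y_1+\lambda_1 z_1 + r_1 H_1(r_1,y_1,z_1,0)$. The right-hand side of the $z_1$-equation has $z_1$-derivative $\lambda_1+\mathcal O(r_1)\neq 0$, since $\lambda_1<0$ by Assumption~3. The implicit function theorem therefore yields a unique smooth solution $z_1 = -y_1/\lambda_1 + \mathcal O(r_1)$ for $r_1$ small, with $y_1$ in any fixed compact set. This solution is the equilibrium surface $S_1$; every point of $\{\eps_1=0\}$ on it is stationary because $\dot r_1,\dot y_1,\dot\eps_1$ all carry a factor $\eps_1$. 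Next I will compute the Jacobian at a point of $S_1$. The rows for $\dot r_1$, $\dot y_1$ and $\dot\eps_1$ vanish identically at $\eps_1=0$, except for their $\partial/\partial\eps_1$ entries, which leave the matrix in block-triangular form. The spectrum is therefore $\{0,0,0,\lambda_1+\mathcal O(r_1)\}$. The three zero eigenvalues come from the two tangent directions of $S_1$ together with the $\eps_1$-direction, which is transverse to $S_1$ but nilpotent. The stable eigenvector is $\partial_{z_1}$.

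For part (i), I will apply the center manifold theorem at each point of $S_1$, or better, globally along the compact piece of $S_1$ with $|y_1|\le Y$ and $r_1\le r_0$, using the standard version for a manifold of equilibria with one uniformly hyperbolic normal direction (as in Fenichel theory or Krupa--Szmolyan). This gives a 3D attracting center manifold $W^c(S_1)$ tangent to $\operatorname{span}\{T S_1,\partial_{\eps_1}\text{-direction}\}$. Because the $z_1$-direction is uniformly contracting, $W^c(S_1)$ is a graph $z_1=h_a(y_1,r_1,\eps_1)$. The uniqueness claim for the branch in $\{r_1=0,\eps_1>0\}$ needs a separate argument. On $\{r_1=0\}$ the system reduces to $\dot\eps_1 = 2\eps_1^2(1-\tfrac{\lambda_1+\lambda_2}{2}\cdot 0)$, that is, $\dot\eps_1 = 2\eps_1^2$, so $\eps_1$ increases for $\eps_1>0$. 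Center manifolds are nonunique only through solutions that approach the equilibrium set in backward time along the center direction. Here, for $\eps_1>0$, trajectories move away from $\eps_1=0$ in forward time, so their backward orbits tend to $S_1$. Any two center manifolds must then contain the same backward orbits, which gives uniqueness on the side $\eps_1>0$. This is the standard argument from Lemma~2.6 of Krupa--Szmolyan (2001), and I will invoke it.

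For part (ii), the existence of the stable foliation $\mathcal F^s$ with 1D fibers over $W^c(S_1)$ follows from Fenichel's fibration theorem in its center-manifold form. The fibers are tangent to $\partial_{z_1}$ at $S_1$. The contraction estimate comes from the spectral gap: along the fibers the rate is governed by $\partial \dot z_1/\partial z_1 = \lambda_1+\eps_1+\mathcal O(r_1)$, which stays arbitrarily close to $\lambda_1+\mathcal O(r_1)$ in a small neighbourhood of $\eps_1=0$. Hence for any $k>\lambda_1+\mathcal O(r_1)$, the fiber contraction over $[0,T]$ beats $e^{kT}$, provided the neighbourhood is shrunk so that $\eps_1$ and $r_1$ are small relative to $k-\lambda_1$.

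I expect the main obstacle to be making these results uniform in $y_1$. The surface $S_1$ is noncompact in $y_1$, and the coupling terms such as $\gamma_2 z_1^3$ grow with $y_1$. I will handle this by restricting to $|y_1|\le Y$ for an arbitrary fixed $Y$ and choosing $r_0$ and the $\eps_1$-range depending on $Y$. This suffices for tracking $\Gamma_2$ and the nearby slow manifolds, which enter chart $K_1$ with $y_1$ bounded.
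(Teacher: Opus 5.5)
Your proposal is correct and follows essentially the paper's route. The paper builds up the spectrum through the invariant subspaces $\{r_1=\eps_1=0\}$, $\{r_1=0\}$ and $\{\eps_1=0\}$ before treating the full 4D system, whereas you read it directly off the block-triangular 4D Jacobian, and you also spell out the uniqueness, graph and fibre-contraction arguments that the paper leaves to the standard references. One wording slip: center manifolds are non-unique on the side where center orbits approach the equilibria in forward time (here $\eps_1<0$), so your sentence saying non-uniqueness comes from orbits approaching in backward time has the direction reversed. On $\eps_1>0$, uniqueness holds because in backward time $\eps_1\to 0$ only algebraically, while deviations in the attracting $z_1$-direction grow like $e^{\lambda_1 t}$ (equivalently like $\exp(|\lambda_1|/(2\eps_1))$). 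Your conclusion is therefore right, but that justification should replace the claim that backward orbits tend to $S_1$, which also fails for $\lambda_2/\lambda_1>1$, where $y_1$ drifts to large values.
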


We give details of the proof in Appendix~\ref{appendix:entrychart}.

\subsection{Dynamics in the exit chart $\boldsymbol{K_3}$}  \label{subsec:exitchart}
To study the unbounded branch of the special solution $\Gamma_2$ in forward time, we use the directional blow-up defined by $\overline x = 1$. The blow-up transformation in this case is 
\[ x = r_3, \quad y = r_3^2 y_3, \quad z = r_3 z_3, \quad \text{ and } \quad \eps = r_3^2 \eps_3. \]
Transforming \eqref{eq:reducednormalform} and desingularizing by a factor of $r_3$, i.e., setting $dt_3 = r_3 dt$, we obtain 
\begin{equation} \label{eq:blownupK3}
  \begin{split}
    \dot r_3 &= r_3 \eps_3 \left( 1+r_3 F_3 \right), \\ 
    \dot y_3 &= \eps_3 \left( - 2y_3 + \left( \lambda_1+\lambda_2 \right) z_3 + r_3 (G_3 - 2y_3 F_3) \right), \\ 
    \dot z_3 &= y_3 - \lambda_1 z_3 - \eps_3 z_3 + r_3 H_3, \\ 
    \dot \eps_3 &= -2\eps_3^2 \left( 1 + r_3 F_3 \right).
  \end{split}
\end{equation}
Here, the overdot denotes the derivative with respect to $t_3$, and the functions $F_3, G_3$, and $H_3$ are given by
\begin{equation*}
  \begin{split}
    F_3 (r_3,y_3,z_3,\eps_3) &= \tfrac{\lambda_1+\lambda_2}{2} + r_3 (\alpha_1+\alpha_2 z_3 + \alpha_3 z_3^2) + r_3^2 \mathcal O(y_3z_3,r_3y_3^2,r_3 \eps_3^2), \\ 
    G_3 (r_3,y_3,z_3,\eps_3) &= \beta z_3 + r_3 \mathcal O(y_3,r_3^2 \eps_3^2), \\ 
    H_3 (r_3,y_3,z_3,\eps_3) &= \gamma_1 z_3 + \gamma_2 z_3^2 + \gamma_3 \eps_3 z_3 + r_3 \mathcal O (\eps_3z_3,y_3z_3^2,r_3y_3z_3).
  \end{split}
\end{equation*}

\begin{proposition}  \label{prop:exitchart}
The 4D system \eqref{eq:blownupK3} possesses a surface, $S_3 = \left\{ r_3 \geq 0, y_3 \in \mathbb R, z_1 = \tfrac{y_3}{\lambda_1}+\mathcal O(r_3), \eps_3 = 0 \right\}$, of equilibria with unstable eigenvalue given by $-\lambda_1 + \mathcal O(r_3)$ and triple zero eigenvalue. Moreover, the following statements hold. 
\begin{enumerate}[label=(\roman*)]
  \item There exists a repelling 3D center manifold, $W^c(S_3)$, of $S_3$. The center manifold can be expressed as a graph $z_3 = h_3(y_3,r_3,\eps_3)$ for a sufficiently small neighbourhood of $r_3=0,\eps_3=0$. The branch of $W^c(S_3)$ in $\left\{ r_3 =0, \eps_3>0 \right\}$ is unique. 
  \item There exists an unstable invariant foliation, $\mathcal F^u$, with base $W^c(S_3)$ and 1D fibers. For any $k < -\lambda_1 + \mathcal O(r_3)$, the expansion along $\mathcal F^u$ during a time interval $[0,T]$ is stronger than $e^{kT}$.
\end{enumerate}
\end{proposition}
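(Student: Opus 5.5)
The plan is to mirror the entry chart analysis of Proposition~\ref{prop:entrychart} (Appendix~\ref{appendix:entrychart}), with the direction of time reversed. The key observation is that after the substitution $t_3 \mapsto -t_3$, system \eqref{eq:blownupK3} has the same structure as \eqref{eq:blownupK1}. The $z_3$-equation is then the unique fast, hyperbolic direction, and $(r_3,y_3,\eps_3)$ are center directions whose vector field vanishes on $\{\eps_3=0\}$.

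\textbf{Step 1: the surface $S_3$ and its spectrum.} Set $\eps_3 = 0$ in \eqref{eq:blownupK3}. The $r_3$-, $y_3$- and $\eps_3$-equations vanish identically, so equilibria are given by $y_3 - \lambda_1 z_3 + r_3 H_3(r_3,y_3,z_3,0) = 0$. Since $\partial_{z_3}(y_3-\lambda_1 z_3 + r_3H_3) = -\lambda_1 + \mathcal O(r_3) \neq 0$ (because $\lambda_1<0$ by Assumption~3), the implicit function theorem gives locally $z_3 = y_3/\lambda_1 + \mathcal O(r_3)$; this is $S_3$. At a point of $S_3$ the Jacobian has:
\begin{itemize}
\item a $z_3$-row with diagonal entry $-\lambda_1 + \mathcal O(r_3) > 0$;
\item zero rows for $\dot r_3$ and $\dot\eps_3$, since each carries a factor $\eps_3$, respectively $\eps_3^2$;
\item a $y_3$-row proportional to $\eps_3$, hence zero on $S_3$.
\end{itemize}
The spectrum is therefore $\{-\lambda_1+\mathcal O(r_3),0,0,0\}$, with the zero eigenspace tangent to $S_3$ together with the $\eps_3$-direction.

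\textbf{Step 2: the center manifold and its uniqueness (part (i)).} Reverse time. Then $S_3$ is a 2D family of equilibria with one uniformly attracting direction, and the center manifold theorem, applied uniformly along compact pieces of $S_3$, yields a 3D center manifold $W^c(S_3)$. Because $\partial_{z_3}\dot z_3 \neq 0$, it is a graph $z_3 = h_3(y_3,r_3,\eps_3)$, and it is repelling in the original time.

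For uniqueness on $\{r_3=0,\eps_3>0\}$, I follow the argument in \cite{Szmolyan2001} (see also the entry chart). In reversed time $\dot\eps_3 = 2\eps_3^2>0$, so on the invariant plane $r_3=0$ the center flow for $\eps_3>0$ moves away from $S_3$. Two candidate center manifolds then differ by a function that is exponentially contracted in the normal direction while being transported away from $\eps_3=0$. Standard estimates force the difference to vanish, so the branch is unique. I expect this to be the main obstacle. The difficulty is that the center direction is only algebraically (not exponentially) repelling, so the estimate must carefully compare the exponential normal contraction with the algebraic growth $\eps_3(t) = \eps_3(0)/(1-2\eps_3(0) t)$. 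A further technical point is verifying that the $\mathcal O(r_3)$ and $\eps_3 z_3$ terms do not spoil uniformity for small $r_3,\eps_3$. I would handle both by restricting to a small box and using a Lyapunov--Perron or graph-transform argument with explicit bounds.

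\textbf{Step 3: the unstable foliation (part (ii)).} With $W^c(S_3)$ in hand, Fenichel's fibering theorem for the center-unstable splitting (in reversed time, the stable fibering) gives an invariant foliation $\mathcal F^u$ with base $W^c(S_3)$ and 1D fibers transverse to it, tangent along $S_3$ to the $z_3$-direction. The rate estimate follows from the spectral gap. Linearizing the $z_3$-equation along a fiber gives the rate $-\lambda_1 - \eps_3 + \mathcal O(r_3)$. Hence, on a sufficiently small neighbourhood, for any $k < -\lambda_1 + \mathcal O(r_3)$ the expansion over $[0,T]$ dominates $e^{kT}$, by a Gronwall-type comparison after shrinking the neighbourhood so that $\eps_3$ and $r_3$ are small enough to absorb the perturbation into the gap.
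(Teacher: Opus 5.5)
Your route is the paper's. Appendix~\ref{appendix:exitchart} only records the linearization of \eqref{eq:blownupK3}, first on the nested invariant sets $\{r_3=\eps_3=0\}$, $\{r_3=0\}$, $\{\eps_3=0\}$ and then on the full system. This gives the unstable eigenvalue $-\lambda_1+\mathcal O(r_3)$ and a 3D center generalized eigenspace that contains a generalized eigenvector. The appendix then invokes center manifold theory as in the entry chart. You do the 4D linearization in one step, and you add arguments for uniqueness and for the fibre rate that the paper does not give; your Step~3 is the standard fibering argument and is fine. Two points need repair.

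First, in Step~1 the $r_3$- and $y_3$-rows of the Jacobian do not vanish on $S_3$. Their $\eps_3$-entries are $r_3(1+r_3F_3)$ and $-2y_3+(\lambda_1+\lambda_2)z_3+\mathcal O(r_3)=(\mu-1)y_3+\mathcal O(r_3)$, where $\mu=\lambda_2/\lambda_1$. Your stated spectrum is still correct, because the Jacobian is lower triangular in the ordering $(\eps_3,r_3,y_3,z_3)$. However, the zero eigenvalue is not semisimple. Away from $p_3$ the kernel is only $TS_3$, and the third center direction is a generalized eigenvector with nonzero $\eps_3$-component (the one listed in the paper's appendix), not an eigenvector. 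The center manifold theorem only uses the spectral splitting, so nothing downstream breaks.

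Second, the uniqueness estimate in Step~2 does not close in general, and the obstruction is not merely technical. Your comparison needs orbits on one center manifold, followed in original forward time (backward in your reversed time), to stay in the common domain while $\eps_3\to0$. On $\{r_3=0\}$ the center flow is, to leading order, $\dot y_3=(\mu-1)\eps_3 y_3$ and $\dot\eps_3=-2\eps_3^2$, so $y_3\,\eps_3^{(\mu-1)/2}$ is approximately conserved.

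For the strong SFN ($\mu<1$), these orbits converge to $p_3$, and your exponential-versus-algebraic comparison does force the difference to vanish. For the weak SFN ($\mu>1$), every orbit with $y_3\neq0$ leaves $\{|y_3|\le\delta\}$ after a time of order $1/\eps_3$. The comparison then only shows that two center manifolds agree up to $\mathcal O(e^{-c(y_3)/\eps_3})$, with $c(y_3)>0$ for $|y_3|<\delta$.

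This cannot be sharpened. In reversed time, $\{|y_3|=\delta\}$ is an inflow boundary for a normally attracting manifold. Flowing in different data from it, differing by functions flat at $\eps_3=0$, produces distinct center manifolds. So in the weak case the uniqueness claim must be weakened, for example to uniqueness modulo exponentially small terms. Alternatively, the branch can be singled out canonically as the image under $\kappa_{23}$ of the solutions of \eqref{eq:unperturbed} with at most algebraic growth as $x_2\to+\infty$. The paper's sketch is silent on this point, and the same caveat applies to $W^c(S_1)$ in the entry chart.
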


We give details of the proof in Appendix~\ref{appendix:exitchart}.

\subsection{Intersection of the center-stable and center-unstable manifolds}   \label{subsec:transitionmaps}
We now track solutions as they move over the topological hemisphere (chart $K_2$) and approach the equator of the hemisphere (charts $K_1$ and $K_3$). 

\begin{lemma}  \label{lemma:transitionmaps}
The transition map, $\kappa_{21}$, from $K_2$ to $K_1$ is given by 
\[ \kappa_{21}\left( x_2,y_2,z_2,r_2 \right) = (r_1,y_1,z_1,\eps_1) = \left( -r_2 x_2, x_2^{-2} y_2, -x_2^{-1}z_2, x_2^{-2} \right), \]
where $x_2<0$.  The transition map, $\kappa_{23}$, from $K_2$ to $K_3$ is given by
\[ \kappa_{23}\left( x_2,y_2,z_2,r_2 \right) = (r_3,y_3,z_3,\eps_3) = \left( r_2 x_2, x_2^{-2}y_2, x_2^{-1}z_2, x_2^{-2} \right),   \]
where $x_2 > 0$.
\end{lemma}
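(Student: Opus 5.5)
The transition maps follow by composing the chart maps. Each chart is a directional chart of the single blow-up \eqref{eq:blowup}, so a point in the overlap of two charts corresponds to the same original point $(x,y,z,\eps)$. I will therefore write down the relations $(x,y,z,\eps)$ in terms of each chart's variables, equate them, and solve for the target chart's coordinates. No information from the dynamics is needed; only the weights $(1,2,1,2)$ of the blow-up matter.

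For $\kappa_{21}$, I equate the chart $K_2$ relations
\[ x = r_2 x_2,\quad y = r_2^2 y_2,\quad z = r_2 z_2,\quad \eps = r_2^2 \]
with the chart $K_1$ relations
\[ x = -r_1,\quad y = r_1^2 y_1,\quad z = r_1 z_1,\quad \eps = r_1^2\eps_1 . \]
The first components give $r_1 = -r_2 x_2$. This value is nonnegative exactly when $x_2<0$, which is why the overlap of $K_1$ and $K_2$ is the half-space $\{x_2<0\}$ (with $\eps_1>0$ and $\overline x<0$).

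The remaining components then give:
\begin{itemize}
\item $y_1 = y/r_1^2 = r_2^2 y_2/(r_2^2 x_2^2) = x_2^{-2} y_2$;
\item $z_1 = z/r_1 = r_2 z_2/(-r_2 x_2) = -x_2^{-1} z_2$;
\item $\eps_1 = \eps/r_1^2 = r_2^2/(r_2^2 x_2^2) = x_2^{-2}$.
\end{itemize}
In each case the factor $r_2$ cancels. Hence the formula remains valid on the invariant set $r_2=0$ by continuity, equivalently by working directly with $(\overline x,\overline y,\overline z,\overline\eps)$ on $\mathbb S^3$ and rescaling.

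For $\kappa_{23}$, I repeat the computation with $x = r_3$ in place of $x=-r_1$. This gives $r_3 = r_2 x_2$, which requires $x_2>0$. The other components become $y_3 = x_2^{-2}y_2$, $z_3 = x_2^{-1}z_2$ and $\eps_3 = x_2^{-2}$; the sign of the $z$-component flips relative to $\kappa_{21}$ because $r_3 = +r_2x_2$.

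To confirm that these maps are the correct identifications, I check that they are diffeomorphisms on their overlaps. Their inverses are explicit:
\[ x_2 = -\eps_1^{-1/2},\quad r_2 = r_1\eps_1^{1/2},\quad y_2 = y_1/\eps_1,\quad z_2 = z_1\eps_1^{-1/2}, \]
and analogously for $K_3$.

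There is no genuine obstacle here. The only point needing care is the sign bookkeeping in the $x$-components, which fixes the domains $x_2<0$ and $x_2>0$ and the sign of $z_1$. A secondary point is justifying the formulas at $r_2=0$ via the cancellation of $r_2$ noted above.
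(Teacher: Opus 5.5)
Your proposal is correct: equating the $K_2$ parametrization $(r_2x_2,\,r_2^2y_2,\,r_2z_2,\,r_2^2)$ with the $K_1$ (resp.\ $K_3$) parametrization and solving is exactly the routine computation the paper relies on, since it states this lemma without proof. Your sign bookkeeping, which forces the domains $x_2<0$ and $x_2>0$, and your inverse maps on the overlaps $\{\eps_1>0\}$ and $\{\eps_3>0\}$ all check out.
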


By Lemma~\ref{lemma:transitionmaps}, the special solution $\Gamma_2$ in the entry chart $K_1$ for $t_2 \in (-\infty,-T)$, with $T$ sufficiently large and positive, is $\kappa_{21} \left( \Gamma_2 \right) = \left( 0,0,0,t_2^{-2} \right)$. It follows then that
\[ \lim_{t_2 \to -\infty} \kappa_{21} \left( \Gamma_2 \right) = \left( 0,0,0,0 \right) =: p_1.  \]
Thus, the special solution $\Gamma_2$ limits on the equilibrium point $p_1 \in S_1$ in chart $K_1$ (see Fig.~\ref{fig:blowupschematic}). 
Moreover, the tangent vector of $\Gamma_2$ in chart $K_1$ at the equilibrium point $p_1$ is given by
\[ \lim_{t_2 \to -\infty} \frac{\tfrac{d}{dt_2} \kappa_{21} \left( \Gamma_2 \right)}{\lVert \tfrac{d}{dt_2} \kappa_{21} \left( \Gamma_2 \right) \rVert} = \lim_{t_2 \to -\infty} \frac{(0,0,0,-2t_2^{-3})}{\lVert (0,0,0,-2t_2^{-3}) \rVert} = (0,0,0,1). \]
Hence, the solution $\Gamma_2$ approaches the point $p_1$ tangent $W^c(S_1)$ in the entry chart $K_1$ (see Appendix~\ref{appendix:entrychart} for the generalized eigenspaces of $S_1$). That is, the part of $\Gamma_2(t_2)$ corresponding to $t_2 \in (-\infty,-T)$ is part of the unique branch of the center manifold $W^c(S_1)$ in the subspace $\{ r_1 = 0, \eps_1 >0 \}$ in chart $K_1$.

Similarly, using Lemma~\ref{lemma:transitionmaps}, we find that the special solution $\Gamma_2$ in the exit chart $K_3$ for $t_2 \in (T,\infty)$, with $T$ sufficiently large and positive, is $\kappa_{23} \left( \Gamma \right) = \left( 0,0,0,t_2^{-2} \right)$. It follows then that
\[ \lim_{t_2 \to \infty} \kappa_{23} \left( \Gamma \right) = \left( 0,0,0,0 \right) =: p_3.  \]
Thus, the special solution $\Gamma_2$ limits on the equilibrium point $p_3 \in S_3$ in chart $K_3$ (see Fig.~\ref{fig:blowupschematic}).
Moreover, the tangent vector of $\Gamma_2$ in chart $K_3$ at the equilibrium point $p_3\in S_3$ is given by
\[ \lim_{t_2 \to \infty} \frac{\tfrac{d}{dt_2} \kappa_{23} \left( \Gamma_2 \right)}{\lVert \tfrac{d}{dt_3} \kappa_{23} \left( \Gamma_2 \right) \rVert} = \lim_{t_2 \to \infty} \frac{(0,0,0,-2t_2^{-3})}{\lVert (0,0,0,-2t_2^{-3}) \rVert} = (0,0,0,1). \]
Hence, the solution $\Gamma_2$ approaches the point $p_3$ tangent to $W^c(S_3)$ in the exit chart $K_3$ (see Appendix~\ref{appendix:exitchart} for the generalized eigenspaces of $S_3$). That is, the part of $\Gamma_2(t_2)$ corresponding to $t_2 \in (T,\infty)$ is part of the unique branch of the center manifold $W^c(S_3)$ in the subspace $\{ r_3 = 0, \eps_3 >0 \}$ in chart $K_3$. 

By the above, the center manifolds $W^c(S_1)$ and $W^c(S_3)$ intersect along the special solution $\Gamma_2$ on the sphere $\mathbb S^3$. 
Hence, as in the classical folded node, we have established the persistence of the maximal canard solution $\gamma_0$ (for the strong SFN) or $\gamma_w$ (for the weak SFN). 
We refer the reader to \cite{Mitry2017,Szmolyan2001,Wex2005} for further details.

\medskip
\begin{remark}
The intersection between $W^c(S_1)$ and $W^c(S_3)$ can be shown to be transverse provided the eigenvalue ratio $\tfrac{\lambda_2}{\lambda_1}$ is not an integer. Conversely, when $\tfrac{\lambda_2}{\lambda_1}$ is an integer, the tangent spaces of $W^c(S_1)$ and $W^c(S_3)$ coincide and these manifolds form a single twisted band. This tangency of $W^c(S_1)$ and $W^c(S_3)$ at integral values may then break under perturbations in $\tfrac{\lambda_2}{\lambda_1}$, which can lead to the creation of additional intersections, i.e., secondary canards.
\end{remark}

\section{Twisted slow manifolds, ribbons, \& symmetry-breaking} \label{sec:ribbons}
Having established the properties of the strong and weak SFNs, we now examine their effect on symmetry-breaking in the 4D symmetrically coupled oscillator system. Recall by Fenichel theory that the normally hyperbolic segments, $S_a$ and $S_s$, of the critical manifold persist for sufficiently small $\eps$ as invariant slow manifolds, $S_{a,\eps}$ and $S_{s,\eps}$, respectively. The invariant slow manifolds near fold and cusp singularities (where normal hyperbolicity breaks down) can become complicated \cite{Broer2013,Kristiansen2023,Szmolyan2001}, often exhibiting localized twisting behaviour. Our analysis shows that these slow manifolds are guaranteed to twist in the neighbourhood of an SFN around the persistent primary maximal canard (which is aligned with the axis of symmetry). 

\subsection{The strong symmetric folded node has two types of rotation}
We now compute the attracting slow manifold, $S_{a,\eps}$, of the strong SFN in the 4D coupled oscillator normal form \eqref{eq:nfsymasym}. 
To do this, we implement the two-point boundary value problem setup developed in \cite{Desroches2008,Hasan2017} and outlined in Appendix~\ref{appendix:bvp}. A representative example is shown in Fig.~\ref{fig:ribbonsstrong}. The main differences here from classical folded node theory are (i) there is no weak canard, (ii) the strong canard plays the role of the axis of rotation, and (iii) the funnel region is (locally) the entire attracting manifold $S_{a,\eps}$, so that every solution on $S_{a,\eps}$ exhibits at least one twist. 

\begin{figure}[h!]
  \centering
  \includegraphics[width=5in]{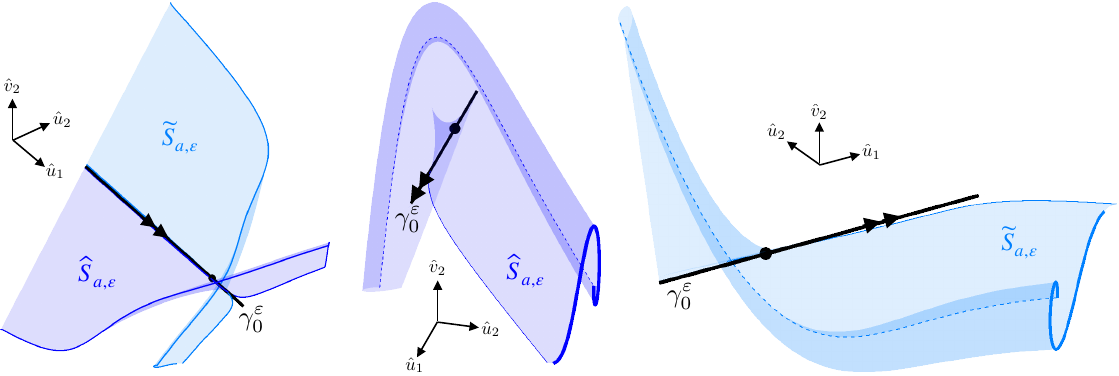}
  \put(-360,112){(a)}
  \put(-258,112){(b)}
  \put(-174,112){(c)}
  \caption{Attracting slow manifold near a strong SFN (black marker) in the 4D coupled oscillator normal form \eqref{eq:nfsymasym} for $\lambda_1 = -2, \lambda_2 = -0.5, a=0.1, b=-0.2$, and $\eps = 0.025$ with all other parameters set to zero. 
  (a) The transition past the strong SFN causes the two halves of the slow manifold, $\widehat{S}_{a,\eps}$ and $\widetilde{S}_{a,\eps}$, to reverse their `polarities' relative to the strong canard $\gamma_0^{\eps}$. 
  (b) Zoom of $\widehat{S}_{a,\eps}$ near the SFN. 
  (c) Zoom of $\widetilde{S}_{a,\eps}$ near the SFN. 
  After the primary rotation about $\gamma_0^{\eps}$, there is additional twisting of the slow manifolds about a secondary axis of rotation, approximated here by the dashed curves in (b) and (c).}
  \label{fig:ribbonsstrong}
\end{figure}

Now, since there is a reflection symmetry $\mathcal R$ in the system, we distinguish the two halves of the slow manifold $S_{a,\eps}$ as follows. Let $\widehat{S}_{a,\eps}$ be the subset of $S_{a,\eps}$ for which $\hat u_2 <0$ (corresponding to $u_2 < u_1$ in the original coordinates of \eqref{eq:normalform}). These are the points on the slow manifold that, in the absence of coupling, encounter the fold curve of oscillator 1 before that of oscillator 2. Hence, the points on $\widehat{S}_{a,\eps}$ are those that favour a fast jump in oscillator 1. By the reflection symmetry, the manifold $\widetilde{S}_{a,\eps} := \mathcal R \widehat{S}_{a,\eps}$ is the subset of $S_{a,\eps}$ for which $\hat u_2 >0$ (corresponding to $u_2>u_1$ in the original $(u_1,v_1,u_2,v_2)$ coordinates). These are the points on the slow manifold that, in the absence of coupling, encounter the fold curve of oscillator 2 before that of oscillator 1 and hence favour fast jumps in oscillator 2.

The overall structure of $S_{a,\eps} = \widehat{S}_{a,\eps} \cup \widetilde{S}_{a,\eps}$ is shown in Fig.~\ref{fig:ribbonsstrong}(a). Since the strong canard $\gamma_0^{\eps}$ lies along the axis of symmetry, the solutions on $\widehat{S}_{a,\eps}$ (which start with $\hat u_2<0$) twist around $\gamma_0^{\eps}$ and end in a region of phase space where $\hat u_2>0$. Similarly, the solutions on $\widetilde{S}_{a,\eps}$ (which start with $\hat u_2>0$) twist around $\gamma_0^{\eps}$ and end in a region of phase space where $\hat u_2<0$. That is, the local passage past the strong SFN causes solutions that initially favour a fast jump in oscillator $i$ for $i=1,2$ to switch to a region that favours a fast jump in the other oscillator.

The two halves of $S_{a,\eps}$ are shown in greater detail in Figs.~\ref{fig:ribbonsstrong}(b) and (c). In panel (b) we see that, in addition to the twist (i.e., half-rotation) about $\gamma_0^{\eps}$, the lower half of the attracting slow manifold, $\widehat{S}_{a,\eps}$, winds around a secondary axis of rotation (dashed blue curve). Similarly, $\widetilde{S}_{a,\eps}$ winds around a secondary axis of rotation (Fig.~\ref{fig:ribbonsstrong}(c); dashed light blue curve). We conjecture that this secondary axis of rotation corresponds to the solution $\gamma_w$ which is tangent in the singular limit to the weak eigendirection of the strong SFN. The rigorous analysis of this secondary rotation is technical and challenging, and we leave it to future work. 

\medskip
\begin{remark}
Here, we sketch an outline of the approach needed to rigorously determine the properties of the secondary rotations, i.e., of solutions that escape the neighbourhood of the SFN along its weak eigendirection rather than its strong eigendirection. 
In the first step, we require information from the other coordinate charts in the blow-up analysis. 
In the coordinate chart defined by $K_4 := \left\{ \overline{z} = 1 \right\}$ for instance, the vector field, after transformation and desingularization by a factor of $r_4$, is given by
\begin{equation} \label{eq:chart4}
  \begin{split}
    \dot x_4 &= \eps_4 - x_4 \left( y_4 - \lambda_1 x_4 + r_4 R_4 \right) + r_4 \eps_4 X_4, \\ 
    \dot y_4 &= (\lambda_1+\lambda_2) \eps_4 - 2y_4 \left( y_4 - \lambda_1 x_4 + r_4 R_4 \right) + r_4 \eps_4 Y_4, \\ 
    \dot r_4 &= r_4 \left( y_4 - \lambda_1 x_4 + r_4 R_4 \right), \\ 
    \dot \eps_4 &= -2\eps_4 \left( y_4 - \lambda_1 x_4 + r_4 R_4 \right),
  \end{split}
\end{equation}
where the overdot denotes derivatives with respect to $t_4$, and the functions $R_4, X_4,$ and $Y_4$ are 
\begin{equation*}
  \begin{split}
    R_4 &= \gamma_1 x_4^2 + \gamma_2 + \gamma_3 \eps_4 + \phi r_4 y_4 + \mathcal O \left( r_4 \eps_4 x_4 \right), \\ 
    X_4 &= \tfrac{\lambda_1+\lambda_2}{2} x_4 + r_4 \left( \alpha_1 x_4^2 + \alpha_2 x_4 + \alpha_3 \right) + \mathcal O \left( r_4^2 y_4 \right), \\
    Y_4 &= \beta x_4 + r_4 \mathcal O \left( x_4 y_4, r_4 \eps_4^2 \right).
  \end{split}
\end{equation*}
The system \eqref{eq:chart4} possesses a 2D manifold of fixed points,
\[ S_4(x_4,r_4) = \left\{ x_4 \in \mathbb R, y_4 - \lambda_1 x_4 + r_4 R_4 =0, r_4 \in \mathbb R, \eps_4 =0 \right\}, \]
which corresponds to the critical manifold. For most points in $S_4$, the spectrum consists of a single hyperbolic eigenvalue (which can take either sign depending on $x_4$ and $r_4$) and a triple zero eigenvalue. Moreover, there is a 1D curve, $x_4 = \hat x_4(r_4)$, along which the spectrum of $\mathcal C_4(r_4) := S_4(\hat x_4(r_4), r_4)$ consists of a quadruple zero eigenvalue. To study the dynamics around this 1D nilpotent curve, we would need to perform a cylindrical blow-up in which $\mathcal C_4$ is transformed to a topological cylinder. A suitable atlas of overlapping coordinate charts would then be needed to analyse the dynamics. Since, as best as we can determine, there is no special algebraic solution to follow, the analysis of the behaviour around $\mathcal C_4$ is substantially different from the analysis of the behaviour around the special solution $\Gamma_2$ in chart $K_2$, and is beyond the scope of the current work. 
\end{remark}

\subsection{Ribbons of the weak symmetric folded node and symmetry-breaking}
As in the previous subsection, we compute the attracting slow manifold in the case of the weak SFN. A representative example is shown in Fig.~\ref{fig:ribbons}. Since the structure of $S_{a,\eps}$ is much more intricate here compared to the strong SFN, we only show $\widehat{S}_{a,\eps}$ and recall that the other half $\widetilde{S}_{a,\eps}$ can be obtained by applying the reflection symmetry $\mathcal R$. 

\begin{figure}[h!]
  \centering
  \includegraphics[width=5in]{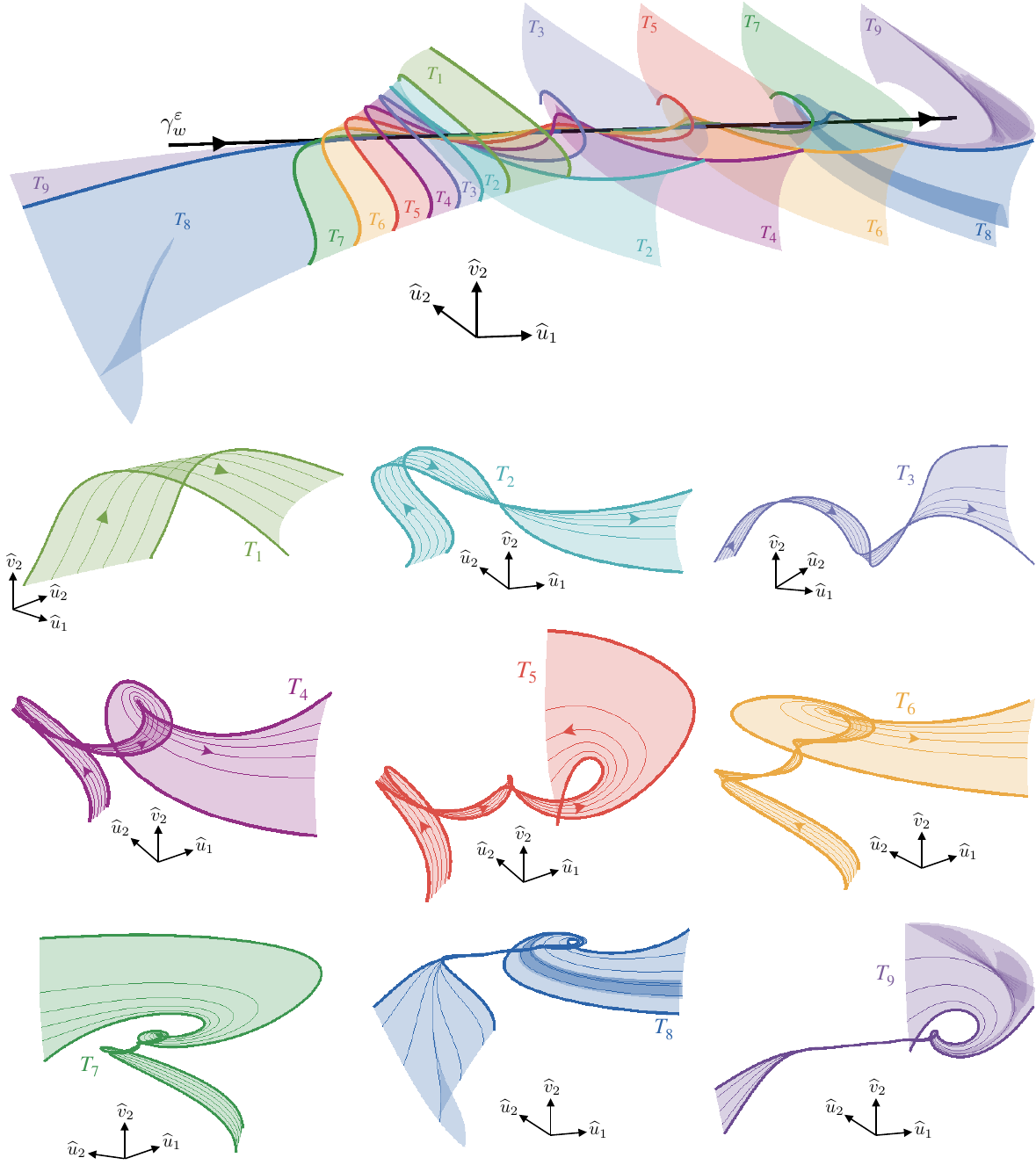}
  \put(-360,396){(a)}
  \put(-360,248){(b)}
  \put(-236,248){(c)}
  \put(-116,248){(d)}
  \put(-360,172){(e)}
  \put(-236,172){(f)}
  \put(-116,172){(g)}
  \put(-360,80){(h)}
  \put(-236,80){(i)}
  \put(-116,80){(j)}
  \caption{Attracting slow manifold, $\widetilde{S}_{a,\eps} = \cup_{i} T_i$, of the normal form \eqref{eq:nfsymasym} near the weak SFN. Here, $T_i$ denotes the subset of $\widehat{S}_{a,\eps}$ that exhibits $i$ twists about $\gamma_w^{\eps}$. The bold curves that separate the twist regions are the maximal canards, and were computed using the methods in Appendix~\ref{appendix:bvp}. The parameters are $\lambda_1=-0.275, \lambda_2 = -2$, and $\eps=0.001$ with all other parameters set to zero. Since $\mu = \tfrac{\lambda_2}{\lambda_1} \approx 7.273 \in (7,8)$, there are 9 twist sectors. Magnified views of (b) $T_1$, (c) $T_2$, (d) $T_3$, (e) $T_4$, (f) $T_5$, (g) $T_6$, (h) $T_7$, (i) $T_8$, and (j) $T_9$.}
  \label{fig:ribbons}
\end{figure}

For the attracting slow manifold shown in Fig.~\ref{fig:ribbons}, $\lambda_1 = -0.275$ and $\lambda_2 = -2$, so that the eigenvalue ratio is $\mu = \tfrac{\lambda_2}{\lambda_1} \approx 7.273 \in (7,8)$. By Proposition~\ref{prop:rotations} and as confirmed numerically in Fig.~\ref{fig:ribbons}, we have that solutions of \eqref{eq:nfsymasym} exhibit up to $9$ twists about the weak canard (which is aligned with the $\hat u_1$ axis in these coordinates). The key differences here from the classical folded node theory are (i) there is no strong canard, and (ii) the funnel region is (locally) the entire attracting manifold $S_a^{\eps}$, so that every solution on $S_{a,\eps}$ exhibits at least one twist about $\gamma_w^{\eps}$.

As is usual with slow manifolds near folded singularities, the twist number changes at the intersections of $S_{a,\eps}$ and $S_{s,\eps}$. These 1D intersection curves are solutions known as secondary maximal canards, $\gamma_i^{\eps}$, for $i=1,2,\ldots, n+1$. 
The numerical method for computing the maximal canards is given in Appendix~\ref{appendix:bvp}.
The maximal canards are ordered in the sense that solutions in the subsets of $S_{a,\eps}$ enclosed by $\gamma_{k-1}^{\eps}$ and $\gamma_k^{\eps}$ make $k$ twists about $\gamma_w^{\eps}$ in an $\mathcal O(\sqrt{\eps})$ neighbourhood of the weak SFN. We label the sector consisting of orbits with $i$ twists by $T_i$.  

The sectors of $\widehat{S}_{a,\eps}$ with odd parity (i.e., $T_1,T_3,\ldots$) leave the neighbourhood of the weak SFN with $\hat u_2 >0$, i.e., they escape in directions that favour fast jumps in oscillator 2. Conversely, the sectors of $\widehat{S}_{a,\eps}$ with even parity (i.e., $T_{2}, T_4, \ldots$) leave the neighbourhood of the weak SFN with $\hat u_2 <0$, i.e., they escape in directions that favour fast jumps in oscillator 1. 
Thus, the parity of the sector $T_i$ (also called a {\em ribbon} in \cite{Hasan2017}) is a key diagnostic for the local symmetry-breaking.

\section{Application to a model of the Eukaryotic Cell Cycle} \label{sec:modelgspt}
In this section, we apply our results to a canonical model of cell cycle regulation \cite{Dragoi2024}.
We describe the model in Section~\ref{subsec:modelconstruction}.  
Then, we examine the critical manifold and associated reduced flow in Section~\ref{subsec:modelS}. 
We study the bifurcation structure of the folded singularities in Section~\ref{subsec:strongweaksfns} and demonstrate that both strong and weak SFNs occur robustly.
We then discuss in Section~\ref{subsec:SFSNs} the geometric mechanisms by which bifurcations of the symmetric folded singularities may occur.

\subsection{Coupled oscillator model of the eukaryotic cell cycle} \label{subsec:modelconstruction}
The eukaryotic cell cycle is a sequence of complex processes through which a cell grows, replicates its DNA, and divides into two daughter cells. 
It consists of four phases \cite{Cooper2000,Morgan2007}: 
\begin{enumerate}[label=(\roman*)]
\setlength{\itemsep}{0pt}
\item G1 (gap 1): the cell grows and synthesizes proteins necessary for DNA replication. 
\item S (DNA synthesis): DNA replication occurs.
\item G2 (gap 2): proteins are synthesized in preparation for mitosis. 
\item M (mitosis and cell division): the replicated chromosomes are separated, followed by cell division into two daughter cells.
\end{enumerate}
One model that has been proposed to understand how the events of the cell cycle are controlled treats the cell cycle as a pair of arrestable and mutually-inhibiting, doubly amplified, negative feedback oscillators that control chromosome replication and segregation \cite{Dragoi2024}.

The construction of the model is as follows.
The basic oscillator, $\Omega$, consists of an activator, $A$, and an inhibitor, $I$, locked in a doubly amplified negative feedback loop. 
The governing equations are 
\begin{equation} \label{eq:single}
  \begin{split}
    \frac{dA}{dt} &= \alpha + \beta \frac{A^3}{\gamma(1+r I)+A^3}-\delta A =: f(A,I), \\ 
    \frac{dI}{dt} &= \eps \left( A + \kappa \frac{I^3}{\ell+I^3} - \rho I \right) =: \eps g(A,I), 
  \end{split}
\end{equation}
where $0< \eps \ll 1$ measures the timescale separation, and the parameters $\alpha, \beta, \gamma, \delta, \kappa, \rho, r$ and $\ell$ are all positive. Typical values for the parameter are provided in Table~\ref{tab:params}. 
Both activactor and inhibitor components have autocatalytic synthesis as described by the third-order Hill functions. However, the autocatalytic synthesis of $A$ is inhibited by $I$ via increases in the effective binding-constant term $\gamma(1+r I)$. Also, both activator and inhibitor components have constant degradation rates, $\delta$ and $\rho$. 

\begin{table}[h]
   \centering
   \begin{tabular}{|r|l|r|l|r|l|r|l|r|l|} 
   \hline
   $\alpha$ & $0.015$ & $\beta$ & $0.16$ & $\gamma$ & $0.01$ & $\delta$ & $0.2$ & $l$ & 0.07 \\ 
   \hline
   $\kappa$ & $1.95$ & $\rho$ & $3$ & $r$ & $25$ & $\eps$ & $0.01$ & {c} & {0.5} \\ 
   \hline
   \end{tabular}
   \caption{Standard parameter values taken from \cite{Dragoi2024}, except for $\eps$ and $c$, which have been modified from their original values of $\eps = 0.05$ and $c=0.25$.}
   \label{tab:params}
\end{table}

The full cell cycle model is constructed by taking two copies, $\Omega_1$ and $\Omega_2$, of the basic oscillator and coupling them by mutual inhibition. 
The model equations then are
\begin{equation} \label{eq:coupled}
  \begin{split}
    \dot{A}_1 &=  f_1(A_1,I_1) - c A_1 A_2, \\ 
    \dot{I}_1 &= \eps g_1(A_1,I_1), \\
    \dot{A}_2 &= f_2(A_2,I_2) - c A_1 A_2, \\ 
    \dot{I}_2 &= \eps g_2(A_2,I_2), 
  \end{split}
\end{equation}
which is of the form \eqref{eq:coupled_general}. 
Here, $(A_k,I_k)$ denotes the activator-inhibitor pair of oscillator $\Omega_k$, and $f_k(A_k,I_k) = f(A_k,I_k)$ and $g_k(A_k,I_k) = g(A_k,I_k)$ for $k=1,2$.  
The coupling can be viewed as each activator promoting the other's degradation. 
We take $\alpha$ (the constitutive synthesis rate of $A_i$) to be the main bifurcation parameter, whilst keeping the other parameters fixed at the values listed in Table~\ref{tab:params}.

\subsection{Critical manifold, reduced flow, and singularities} \label{subsec:modelS}
We now apply geometric singular perturbation theory to the cell cycle model \eqref{eq:coupled}. 
Since these methods have already been detailed in Section~\ref{subsec:normalformGSPT}, we will simply quote the results of their application to \eqref{eq:coupled} here.

The critical manifold, $S$, of the layer problem is the 2D surface given by
\[ S = \left\{ (A_1,I_1,A_2,I_2) : I_1 = I_{1S}(A_1,A_2) \,\, \text{ and } \,\, I_2 = I_{2S}(A_1,A_2)  \right\}, \]
where the function $I_{1S}$ is given by 
\begin{align*} 
I_{1S}(A_1,A_2) &= \frac{A_1^3(\alpha+\beta-\delta A_1)-\gamma(\delta A_1-\alpha)-c_1 A_1 A_2(\gamma+A_1^3)}{r \gamma \left( \delta A_1 - \alpha+c_1 A_1 A_2 \right)}, 
\end{align*}
and $I_{2S}(A_1,A_2) = I_{1S}(A_2,A_1)$.
For the range of parameters we consider, the layer problem possesses fold curves, $L$, that partition $S$ into attracting and saddle sheets, $S_a$ and $S_s$, respectively. 
In order to track physically relevant solutions, some care must be taken to avoid poles of the vector field and poles of the critical manifold. 
By imposing this restriction, the attracting sheet $S_a$ splits into 4 subsheets, $S_a = S_a^u \cup S_a^d \cup S_a^{\ell} \cup S_a^r$, where the superscripts indicate the sheet is {\em up} along, {\em down} along, {\em left} of, or {\em right} of the axis of symmetry $\mathcal L_s$ (Fig.~\ref{fig:modelcriticalmanifold}(a)), relative to the origin. 
Similarly, the restriction of $L$ to the physically relevant domain partitions the fold set into 4 disjoint curves, $L = L^u \cup L^d \cup L^{\ell} \cup L^r$.

\begin{figure}[h!]
  \centering
  \includegraphics[width=5in]{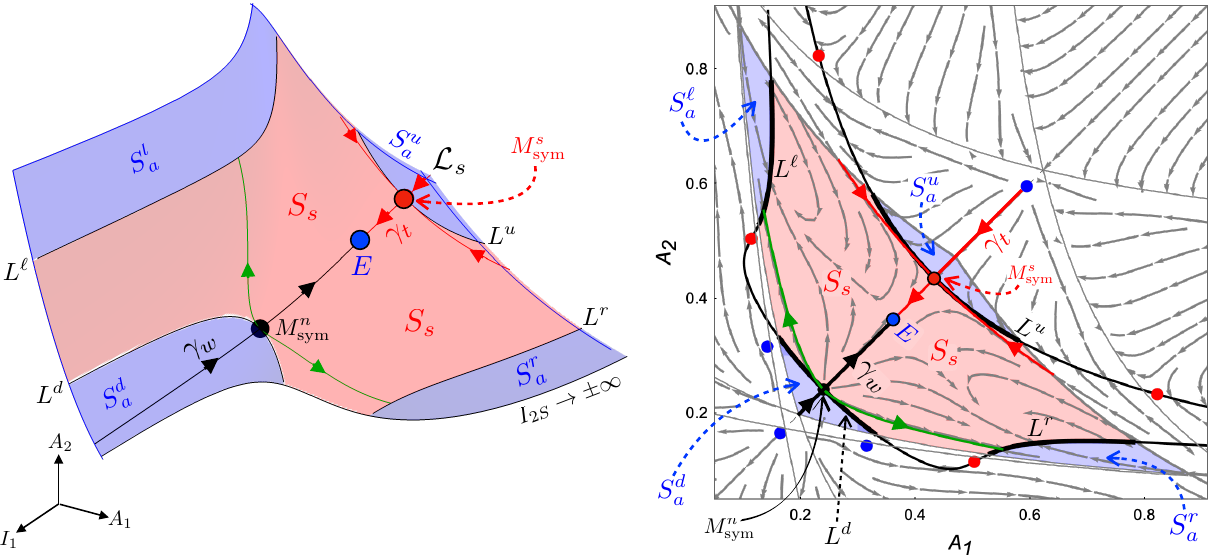}
  \put(-360,155){(a)}
  \put(-180,155){(b)}
  \caption{Geometric structure of the cell cycle model for the standard parameters with $\alpha = 0.0592$. (a) Projection of $S$ (red and blue surfaces) into the positive $(A_1,A_2,I_1)$ orthant. The fold curves ($L^u,  L^d, L^{\ell}, L^r$) separate the attracting sheets ($S_a^u, S_a^d, S_a^{\ell}, S_a^r$) from the saddle sheet $S_s$.  There is a weak SFN (black marker, $M_{\rm sym}^n$) on $L^d$, a symmetric saddle equilibrium (blue marker, $E$) on $S_s$, and a symmetric folded saddle (red marker, $M_{\rm sym}^s$) on $L^u$. 
Note: $L^u$ and $L^d$ are orthogonal to $\mathcal L_s$ at $M_{\rm sym}^s$ and $M_{\rm sym}^n$, respectively. 
  The lower thin black boundary of $S$ is the curve along which there is a pole in $I_{2S}$. (b) Projection of the reduced flow (grey arrows) onto the $(A_1,A_2)$ plane. The blue and red shaded regions are the physically relevant parts of $S$ in which $I_{1S}$ and $I_{2S}$ are both finite and positive (corresponding to the blue and red surfaces in (a)). 
The weak canard $\gamma_w$ of $M_{\rm sym}^n$ is a heteroclinic from $M_{\rm sym}^n$ to $E$.   
Similarly, the true canard $\gamma_t$ of $M_{\rm sym}^s$ is a heteroclinic from $M_{\rm sym}^s$ to $E$. 
Also shown are the ordinary and folded saddle singularities that lie outside the physically meaningful region (unlabelled blue and red markers, respectively).}
  \label{fig:modelcriticalmanifold}
\end{figure}

The desingularized reduced flow on the critical manifold is given by 
\begin{equation} \label{eq:modeldesingularized}
  \begin{split}
    \dot A_1 &= F_1(A_1,A_2) := \left( c A_1 - \tfrac{\partial f_2}{\partial A_2} \right) \tfrac{\partial f_1}{\partial I_1} g_1 - c \tfrac{\partial f_2}{\partial I_2} A_1 g_2, \\
    \dot A_2 &= F_2(A_1,A_2) := \left( c A_2 - \tfrac{\partial f_1}{\partial A_1} \right) \tfrac{\partial f_2}{\partial I_2} g_2 - c \tfrac{\partial f_1}{\partial I_1} A_2 g_1,
  \end{split}
\end{equation}
where all functions and derivatives are evaluated along $S$. 
Recall that the desingularized reduced system \eqref{eq:modeldesingularized} is topologically equivalent to the reduced flow on $S_a$, but has opposite orientation to the reduced flow on $S_s$. The projection of the reduced flow into the $(A_1,A_2)$ plane is shown in Fig.~\ref{fig:modelcriticalmanifold}(b). 

The desingularized system \eqref{eq:modeldesingularized} possesses 3 symmetric ordinary singularities and 2 asymmetric ordinary singularities.
Of these, only one of the symmetric ordinary singularities lies in the physically relevant part of phase space (Fig.~\ref{fig:modelcriticalmanifold}, blue marker $E$). For the parameters we consider, $E$ is a saddle equilibrium on $S_s$ with stable manifold along $\mathcal L_s$ and unstable manifold orthogonal to $\mathcal L_s$. 

The desingularized system \eqref{eq:modeldesingularized} possesses 2 symmetric folded singularities and 4 asymmetric folded singularities. 
In Fig.~\ref{fig:modelcriticalmanifold}, only the symmetric folded singularities lie in the physically relevant part of phase space. 
The symmetric folded singularity (black marker, $M_{\rm sym}^n$) at the cusp of $L^d$ is a weak SFN. 
(Recall that if the strong/weak eigendirection of the SFN is aligned with $\mathcal L_s$, then it is strong/weak type.)
Thus, $M_{\rm sym}^n$ has a single weak canard solution ($\gamma_w$, black arrows), which is contained in $\mathcal L_s$. 
The weak canard intersects the stable manifold of $E$ and forms a heteroclinic connection between $M_{\rm sym}^n$ and $E$. 
The solution of \eqref{eq:modeldesingularized} tangent to the strong eigendirection at $M_{\rm sym}^n$ stays entirely on $S_s$ and is not a canard (green curve in Fig.~\ref{fig:modelcriticalmanifold}).

The symmetric folded singularity (red marker, $M_{\rm sym}^s$) at the cusp of $L^u$ is a symmetric folded saddle (SFS). 
It has a single true canard solution $\gamma_t$, which lies along $\mathcal L_s$ and connects $S_a^u$ to $S_s$.
Moreover, $\gamma_t$ is a heteroclinic orbit that joins $E$ to $M_{\rm sym}^s$. 
The solution of \eqref{eq:modeldesingularized} tangent to the unstable eigendirection at $M_{\rm sym}^s$ stays entirely on $S_s$ and is not a faux canard (red curve orthogonal to $\mathcal L_s$ at $M_{\rm sym}^s$ in Fig.~\ref{fig:modelcriticalmanifold}).  

By Fenichel's theorems, we have that the attracting sheets of $S$ persist as attracting invariant slow manifolds $S_{a,\eps}^u \cup S_{a,\eps}^d \cup S_{a,\eps}^{\ell} \cup S_{a,\eps}^r$ for $\eps$ sufficiently small and positive. Similarly, the saddle sheet persists as an invariant saddle slow manifold $S_{s,\eps}$. Moreover, the local stable and unstable manifolds of $S$ perturb to nearby local stable and unstable manifolds of $S_{\eps}$. The ordinary singularity $E$ persists as a saddle equilibrium of the full 4D cell cycle model, and the slow flow on the invariant slow manifolds is an $\mathcal O(\eps)$ perturbation of the reduced flow on $S$. The dynamics in the non-hyperbolic region containing the SFN are described by canard theory (Section~\ref{sec:blowup} and \cite{Kristiansen2023,Szmolyan2001,Wex2005}). 

\subsection{Strong and weak symmetric folded nodes}  \label{subsec:strongweaksfns}
We now study the bifurcations of the folded singularities. 
To this end, we construct a bifurcation diagram in which the eigenvalue ratios, $\mu$, of the folded singularities are used as the main diagnostic (Fig.~\ref{fig:modelevalueratio}). 
More precisely, let $\lambda_1$ and $\lambda_2$ denote the eigenvalues of a folded singularity $M$ of \eqref{eq:modeldesingularized}, where $\left| \lambda_1 \right| \geq \left| \lambda_2 \right|$.  
Then, we define $\mu := \tfrac{\lambda_2}{\lambda_1}$, so that $M$ is a folded node for $\mu \in (0,1]$, folded saddle for $\mu <0$, and folded focus for $\mu \in \mathbb C$. 
For completeness, we show all folded singularities including those that may not be physically relevant. 

\begin{figure}[h!]
    \centering
    \includegraphics[width=5in]{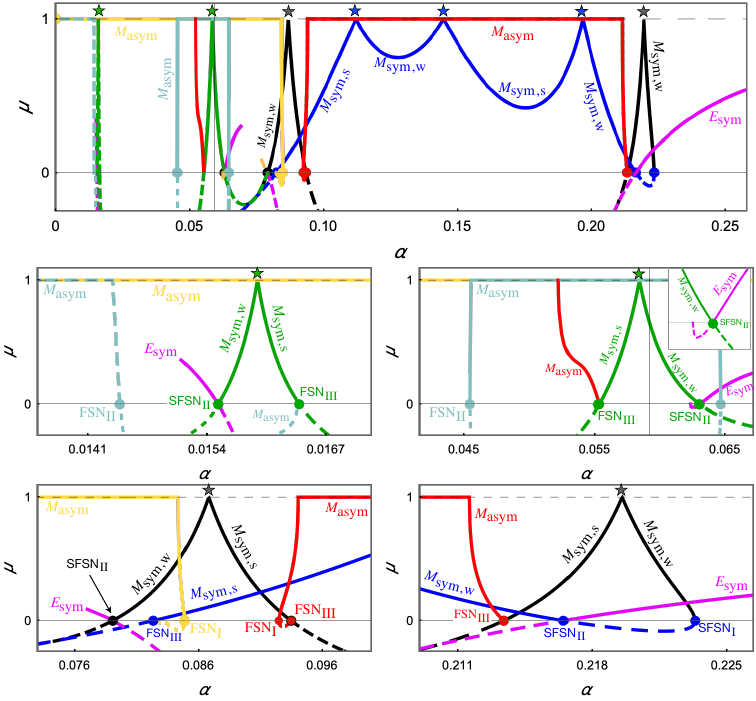}
    \put(-362,326){(a)}
    \put(-362,200){(b)}
    \put(-181,200){(c)}
    \put(-362,96){(d)}
    \put(-181,96){(e)}
    \caption{Classification of the folded singularities of \eqref{eq:modeldesingularized}. 
    (a) Eigenvalue ratio, $\mu$, of the symmetric and asymmetric folded singularities as functions of $\alpha$. 
    Where appropriate, segments of the branches of symmetric ordinary singularities (purple curves) have been included to show the transcritical and pitchfork structures of the folded saddle-node II and III bifurcations, respectively. 
    The symmetric folded singularities (green, black, and blue curves) are SFNs for $\mu \in (0,1)$, and have resonances at $\mu=1$ (star markers) in which they change from strong SFNs ($M_{\rm sym,s}$) to weak SFNs ($M_{\rm sym,w}$). 
    The asymmetric folded singularities (yellow, steel, and red curves) may be folded nodes, folded saddles, or folded foci. (To indicate folded foci, we use horizontal line segments along $\mu=1$ with the understanding that $\mu$ is actually complex.)
    Solid curves indicate that the eigenvalues have negative real parts and dashed curves indicate that at least one eigenvalue has positive real part.
    The remaining panels show the neighbourhoods of the
    (b) leftmost green SFN branch;
    (c) rightmost green SFN branch; 
    (d) leftmost black SFN branch;
    (e) rightmost black SFN branch. 
    Note: the thin black vertical lines along $\alpha = 0.0592$ in panels (a) and (c) indicate the $\alpha$ value used in Figs.~\ref{fig:modelcriticalmanifold}, \ref{fig:coupledbifnasymmetric}(b), \ref{fig:modelsectors}, and \ref{fig:deconperiodicmmos}.
    }
    \label{fig:modelevalueratio}
\end{figure}

In Fig.~\ref{fig:modelevalueratio}, the symmetric folded singularities are shown as the green, black, and blue curves (denoted $M_{\rm sym})$, whereas the asymmetric folded singularities are shown as the yellow, steel, and red curves (denoted $M_{\rm asym}$). 
The green, black, and blue $M_{\rm sym}$ branches form a single continuously connected curve that exists for $0 \leq \alpha \lesssim 0.22333$. 
That is, all of the symmetric folded singularities belong to a single self-intersecting branch, and the green, black, and blue colour coding has been introduced to make the discussion easier to follow. 
The symmetric folded singularities are SFSs for $\alpha=0$ up to $\alpha \approx 0.0155$ where there is a folded saddle-node (FSN) bifurcation of type II \cite{Krupa2010} (Fig.~\ref{fig:modelevalueratio}(b), SFSN$_{\rm II}$ point).  
This FSN II bifurcation has $\mathbb Z_2$ symmetry.  
Note: here, and for other FSN bifurcations, we include segments of the symmetric equilibrium branches to demonstrate the transcritical and pitchfork natures of the FSN bifurcations. (See also Fig.~\ref{fig:desingbifn} in Appendix~\ref{subsec:desingbifn}.)

The symmetric folded singularities are SFNs for 
\begin{itemize}
\setlength{\itemsep}{0pt}
\item $\alpha \in (0.01550, 0.01641)$, i.e., the green branch between the $\mathbb Z_2$-symmetric FSN II and FSN III bifurcations in Fig.~\ref{fig:modelevalueratio}(b). 
(See Section~\ref{subsec:SFSNs} for detailed discussion of these FSN bifurcations.) 
The SFNs are weak for $\alpha \in (0.01550, 0.01595)$ and strong for $\alpha \in (0.01595, 0.01641)$. 
The transition between weak and strong SFN occurs at $\alpha \approx 0.01595$ (Fig.~\ref{fig:modelevalueratio}(b), green star) where $\mu = 1$ and the SFN is degenerate. 
A segment of the (purple) symmetric equilibrium branch has been included to show that the $\mathbb Z_2$-symmetric FSN II is a transcritical bifurcation of a symmetric ordinary singularity and symmetric folded singularity. 
The FSN III in this case is a pitchfork bifurcation in which a pair of asymmetric folded saddles (steel $M_{\rm asym}$ branch) bifurcate from a SFN\footnote{Only one asymmetric branch can be seen emanating from the FSN III. This is because the pair of asymmetric folded singularities that emerge from the FSN III have the same eigenvalues and hence produce identical curves in the $(\alpha,\mu)$ plane.}.
\item $\alpha \in (0.05523,0.06303)$, i.e., the green branch between the FSN III and $\mathbb Z_2$-symmetric FSN II bifurcations in Fig.~\ref{fig:modelevalueratio}(c). 
They are strong for $\alpha \in (0.05523,0.05837)$, weak for $\alpha \in (0.05837,0.06303)$, and degenerate (green star) at $\alpha \approx 0.05837$.
The FSN III in this case is a pitchfork bifurcation in which a pair of asymmetric folded nodes (red $M_{\rm asym}$ branch) bifurcate from a SFS. 
The $\mathbb Z_2$-symmetric FSN II is again a transcritical bifurcation in which a symmetric ordinary singularity and a symmetric folded singularity pass through each other and exchange types (Fig.~\ref{fig:modelevalueratio}(c), inset). Note: the thin black vertical line in Fig.~\ref{fig:modelevalueratio}(c) indicates the $\alpha$ value used in Fig.~\ref{fig:modelcriticalmanifold}. At this value, the system exhibits a SFN on $L^d$ (green branch) and a SFS on $L^u$ (blue branch; outside the plot range of Fig.~\ref{fig:modelevalueratio}(c)).
\item $\alpha \in (0.07904,0.09341)$, i.e., the black branch between the black $\mathbb Z_2$-symmetric FSN II and red FSN III bifurcations in Fig.~\ref{fig:modelevalueratio}(d). 
These are weak for $\alpha \in (0.07904,0.08680)$, strong for $\alpha \in (0.08680,0.09341)$, and degenerate (black star) at $\alpha \approx 0.08680$.
Moreover, on the sub-interval $\alpha \in (0.08228,0.09341)$, i.e., between the blue FSN III and red FSN III bifurcations, the system possesses two SFNs (black and blue branches), one on $L^d$ and the other on $L^u$. The blue branch consists of strong SFNs. 
The black $\mathbb Z_2$-symmetric FSN II in this case is a transcritical bifurcation in which a symmetric node equilibrium (purple $E_{\rm sym}$ branch) and a SFS pass through each other and swap types. 
The red FSN III is a pitchfork bifurcation in which a pair of asymmetric folded saddles (red $M_{\rm asym}$ branch) emerge from a SFN. 
\item $\alpha \in (0.08228,0.21641)$, i.e., the blue branch between the FSN III bifurcation in Fig.~\ref{fig:modelevalueratio}(d) and the blue $\mathbb Z_2$-symmetric FSN II bifurcation in Fig.~\ref{fig:modelevalueratio}(e). 
As $\alpha$ is increased, the blue SFNs switch from strong to weak at $\alpha \approx 0.11180$ (Fig.~\ref{fig:modelevalueratio}(a), leftmost blue star), weak to strong at $\alpha \approx 0.14473$ (Fig.~\ref{fig:modelevalueratio}(a), middle blue star), and then strong to weak again at $\alpha \approx 0.19677$ (Fig.~\ref{fig:modelevalueratio}(a), rightmost blue star). 
The blue FSN III in Fig.~\ref{fig:modelevalueratio}(d) is a pitchfork bifurcation in which a pair of asymmetric folded saddles (yellow $M_{\rm asym}$ branch) bifurcate from a SFN. 
The blue $\mathbb Z_2$-symmetric FSN II bifurcation in Fig.~\ref{fig:modelevalueratio}(e) is another transcritical bifurcation involving a purple branch of symmetric equilibria and the blue branch of symmetric folded singularities. 
\item $\alpha \in (0.21335,0.22334)$, i.e., the black branch between the red FSN III bifurcation and the blue $\mathbb Z_2$-symmetric FSN I bifurcation in Fig.~\ref{fig:modelevalueratio}(e). These are strong for $\alpha \in (0.21335,0.21952)$, weak for $\alpha \in (0.21952,0.22334)$, and degenerate at $\alpha \approx 0.21952$.
Moreover, on the sub-interval $(0.21333,0.21645)$ enclosed by the red FSN III and blue $\mathbb Z_2$-symmetric FSN II points, the system has two SFNs (black and blue branches). The blue branch consists of weak SFNs. 
The red FSN III in this case is a pitchfork bifurcation in which a pair of asymmetric folded nodes (red $M_{\rm asym}$ branch) emerge from a SFS.
The $\mathbb Z_2$-symmetric FSN I is a saddle-node bifurcation in which a SFN and a SFS collide and annihilate each other. 
\end{itemize}
For all other $\alpha$ shown in Fig.~\ref{fig:modelevalueratio}, the symmetric folded singularities are SFSs. 

The asymmetric folded singularities (yellow, steel, and red curves) may be folded nodes, folded saddles, or folded foci depending on the value of $\alpha$. 
These asymmetric folded singularities change types at classical FSN I \cite{Vo2015}, FSN II \cite{Krupa2010}, and FSN III \cite{Roberts2015,RobertsThesis} bifurcations. 
There is direct interaction between the symmetric and asymmetric folded singularities at the various FSN III bifurcations shown in Fig.~\ref{fig:modelevalueratio}. 

\medskip
\begin{remark}
The full bifurcation structure of the desingularized system \eqref{eq:modeldesingularized} is extremely rich and intricate. 
For completeness, a more detailed discussion of this bifurcation structure is provided in  Appendix~\ref{subsec:desingbifn}. 
\end{remark}

\subsection{The $\mathbb Z_2$-symmetric folded saddle-node bifurcations}  \label{subsec:SFSNs}
As demonstrated in Fig.~\ref{fig:modelevalueratio}, the symmetric folded singularities undergo three main types of $\mathbb Z_2$-symmetric bifurcations: FSN I, FSN II, and FSN III.
Here, we examine the geometric mechanisms by which these bifurcations occur. 
To do this, we identify the singularities of the desingularized system \eqref{eq:modeldesingularized} in the $(A_1,A_2)$ plane, and track how these objects (and the nullclines that determine them) change with parameters (Fig.~\ref{fig:modelsfsns}). 
Recall that a folded singularity is a point on the (black) fold curve $L$ along which the right-hand-sides, $F_1$ and $F_2$, of the $A_1$- and $A_2$-equations in \eqref{eq:modeldesingularized} vanish (red and green curves, respectively). 
An ordinary singularity is a point at which the (red) $A_1$-, (green) $A_2$-, (cyan) $I_1$-, and (magenta) $I_2$-nullclines intersect. 

\begin{figure}[h!]
   \centering
   \includegraphics[width=5in]{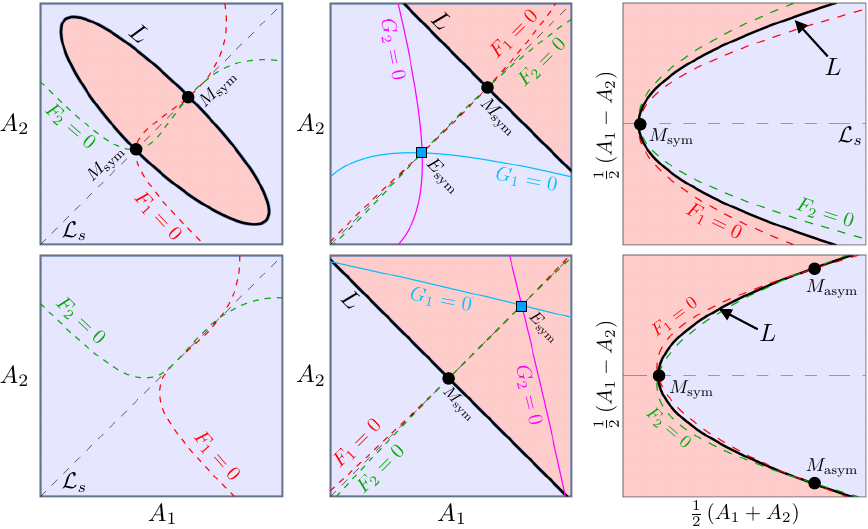}
   \put(-362,212){(a$_{\rm i}$)}
   \put(-363,108){(a$_{\rm ii}$)}
   \put(-241,212){(b$_{\rm i}$)}
   \put(-242,108){(b$_{\rm ii}$)}
   \put(-118,212){(c$_{\rm i}$)}
   \put(-120,108){(c$_{\rm ii}$)}
   \caption{Local geometry of \eqref{eq:modeldesingularized} for the $\mathbb Z_2$-symmetric FSN bifurcations. 
   The blue and red regions correspond to $S_a$ and $S_s$, respectively. 
   Folded singularities (black markers) lie in the intersections of the black fold curve $L$, dashed red $\{F_1=0\}$ contour, and dashed green $\{F_2=0\}$ contour. 
   Ordinary singularities (blue squares) lie in $\{F_1=0\} \cap \{F_2=0\} \cap \{G_1=0\} \cap \{G_2=0\}$, where $G_k := g_k(A_k,I_{kS})$ for $k=1,2$, and $\{G_1=0\}$ and $\{G_2=0\}$ are shown as cyan and magenta curves, respectively. 
   Left column: transition through the $\mathbb Z_2$-symmetric FSN I from Fig.~\ref{fig:modelevalueratio}(e). 
   (a$_{\rm i}$) Before the FSN I ($\alpha = 0.22$), $L$ is a closed curve with a SFN and SFS ($M_{\rm sym}$ points). 
   (a$_{\rm ii}$) After the FSN I ($\alpha = 0.2233$), $L$ and $M_{\rm sym}$ cease to exist. 
   Middle column: transition through the rightmost $\mathbb Z_2$-symmetric FSN II from Fig.~\ref{fig:modelevalueratio}(c).
   (b$_{\rm i}$) Before the FSN II ($\alpha = 0.0626$), $E_{\rm sym}$ is a saddle and $M_{\rm sym}$ is a SFN.
   (b$_{\rm ii}$) After the FSN II ($\alpha = 0.0632$), $E_{\rm sym}$ is a node and $M_{\rm sym}$ a SFS.
   Right column: transition through the FSN III from Fig.~\ref{fig:modelevalueratio}(d) shown in the $\left(\tfrac{A_1+A_2}{2}, \tfrac{A_1-A_2}{2} \right)$ plane.
   (c$_{\rm i}$) Before the FSN III ($\alpha = 0.082$), the $A_1$- and $A_2$-nullclines intersect $L$ once at $M_{\rm sym}$.
   (c$_{\rm ii}$) After the FSN III ($\alpha = 0.0831$), there is an extra pair of intersections away from $\mathcal L_s$ at $M_{\rm asym}$.
   }
   \label{fig:modelsfsns}
\end{figure}

The $\mathbb Z_2$-symmetric FSN I bifurcation occurs when a SFN and a SFS collide and annihilate each other. 
Geometrically, for $\alpha$ close to the $\mathbb Z_2$-symmetric FSN I value, the fold $L$ is an oval-shaped curve that encloses $S_s$ (Fig.~\ref{fig:modelsfsns}(a$_{\rm i}$)). 
A SFN lies at one extreme of $L$ and a SFS lies at the other. 
As $\alpha$ approaches the $\mathbb Z_2$-symmetric FSN I, the fold set (and $S_s$) shrinks and the symmetric folded singularities converge to a common point. 
Then, at the $\mathbb Z_2$-symmetric FSN I, the fold set and the two symmetric folded singularities coincide at a single point on $\mathcal L_s$. 
For $\alpha$ values beyond the $\mathbb Z_2$-symmetric FSN I, the fold set and the symmetric folded singularities cease to exist (Fig.~\ref{fig:modelsfsns}(a$_{\rm ii}$)).  

\medskip
\begin{remark}
The $\mathbb Z_2$-symmetric FSN I is more complicated than the classical FSN I because a sheet of $S$ is created/destroyed at the bifurcation. Whilst examples of the $\mathbb Z_2$-symmetric FSN I have been reported in other case studies (e.g., Fig.~7(b) and (c) of \cite{Awal2024}), rigorous analysis of this bifurcation has yet to be done. 
\end{remark}

The $\mathbb Z_2$-symmetric FSN II bifurcation occurs when a symmetric folded singularity $M_{\rm sym}$ and a symmetric ordinary singularity $E_{\rm sym}$ coalesce and change types in a hybrid transcritical bifurcation. On one side of the bifurcation, $E_{\rm sym}$ is a node or saddle on $S_a$ (blue region), whilst $M_{\rm sym}$ is a SFS or SFN. 
At the $\mathbb Z_2$-symmetric FSN II, $E_{\rm sym}$ and $M_{\rm sym}$ coincide. 
Then, on the other side of the bifurcation, $E_{\rm sym}$ lies on $S_s$ (red region) and switches type from node to saddle (or vice versa), whilst $M_{\rm sym}$ changes type from SFS to SFN (or vice versa). 
For the example shown in Fig.~\ref{fig:modelsfsns}(b$_{\rm i}$), $E_{\rm sym}$ is a saddle on $S_a$ and $M_{\rm sym}$ is a SFN before the bifurcation.
Then, as $\alpha$ is increased through the bifurcation, $E_{\rm sym}$ is a node on $S_s$ and $M_{\rm sym}$ is a SFS (Fig.~\ref{fig:modelsfsns}(b$_{\rm ii}$)). 

\medskip
\begin{remark}
Rigorous analysis of the $\mathbb Z_2$-symmetric FSN II was carefully performed using blow-up methods in \cite{Kristiansen2023} in a pair of linearly coupled FitzHugh-Nagumo oscillators. These results were then extended in \cite{Pedersen2026} to the class of coupled oscillators with any type of mutually inhibitory coupling.
\end{remark}

The FSN III bifurcations we find in Fig.~\ref{fig:modelevalueratio} are pitchfork bifurcations involving branches of symmetric folded singularities and asymmetric folded singularities. 
More precisely, they possess a persistent branch of symmetric folded singularities $M_{\rm sym}$ from which a pair of asymmetric folded singularities bifurcate. 
There are 2 basic sub-types:
\begin{enumerate}[label=(\roman*)]
\item Supercritical: $M_{\rm sym}$ changes from SFN to SFS, and a pair of asymmetric folded nodes bifurcates from $M_{\rm sym}$. These asymmetric folded nodes exist for the $\alpha$ values where $M_{\rm sym}$ is a SFS.
\item Subcritical: $M_{\rm sym}$ changes from SFN to SFS, and a pair of asymmetric folded saddles bifurcates from $M_{\rm sym}$. These asymmetric folded saddles exist for the $\alpha$ values where $M_{\rm sym}$ is a SFN. 
\end{enumerate} 
The green and red FSN III bifurcations in Fig.~\ref{fig:modelevalueratio}(c) and (e) are both supercritical. 
The green, blue, and red FSN III bifurcations in Fig.~\ref{fig:modelevalueratio}(b) and (d) are subcritical. 
Rigorous blow-up analysis of this class of FSN III bifurcations, in which asymmetric folded singularities bifurcate from a symmetric folded singularity, has yet to be done.

\medskip
\begin{remark}
Other types of FSN III bifurcations exist. The first instance was in a model of synaptically coupled respiratory neurons in the pre-B\"otzinger complex \cite{Best2005,Roberts2015}. There, the FSN III was a pitchfork bifurcation in which a pair of asymmetric folded singularities bifurcated from a persistent symmetric ordinary singularity $E_{\rm sym}$. In the 
\begin{enumerate}[label=(\roman*)]
\item Supercritical case: $E_{\rm sym}$ changes from stable node to saddle, and a pair of asymmetric folded nodes bifurcates from $E_{\rm sym}$. The asymmetric folded nodes exist for parameters where $E_{\rm sym}$ is a saddle. 
\item Subcritical case: $E_{\rm sym}$ changes from stable node to saddle, and a pair of asymmetric folded saddles bifurcates from $E_{\rm sym}$. The asymmetric folded saddles exist for parameters where $E_{\rm sym}$ is a stable node. 
\end{enumerate}
Further subdivisions of these FSN III bifurcations are made based on whether $E_{\rm sym}$ is a stable node on $S_a$ or $S_s$, see \cite{Roberts2015,RobertsThesis}. 
Blow-up analysis of these FSN III bifurcations was done in \cite{RobertsThesis}. 
\end{remark}

\section{Symmetry-Breaking Rhythms in the Cell Cycle Model} \label{sec:cellmodelsymmbreaking}
With the results of Section~\ref{sec:modelgspt} in hand, we now examine how the SFNs induce symmetry-breaking in the cell cycle model \eqref{eq:coupled}. We describe the dynamics in Section~\ref{subsec:modelmmos}, with particular emphasis on the symmetry-breaking attractors. Then, we combine our analysis of SFNs with the theory of canard-induced mixed-mode oscillations \cite{Brons2006} to demonstrate the origin and properties of the symmetry-breaking in Sections~\ref{subsec:modeldecon} and \ref{subsec:modelMMOdecons}.

\subsection{Broken-symmetry mixed-mode oscillations} \label{subsec:modelmmos}
The bifurcation structure of \eqref{eq:coupled} with respect to $\alpha$ is shown in Fig.~\ref{fig:coupledbifnasymmetric}. There are 3 main types of attractors: symmetric equilibria, $E_{\rm sym}$, anti-phase (AP) limit cycles in which the two oscillators $\Omega_1$ and $\Omega_2$ generate identical rhythms in phase space but half a period out of phase with each other, and broken-symmetry rhythms in which $\Omega_1$ and $\Omega_2$ exhibit qualitatively different oscillatory dynamics. 
The $\alpha$ interval shown has been restricted to focus on the broken-symmetry rhythms. A more complete account of the bifurcation structure of the cell cycle model is provided in Appendix~\ref{subsec:modeldynamics}.

\begin{figure}[h!]
  \centering
  \includegraphics[width=5in]{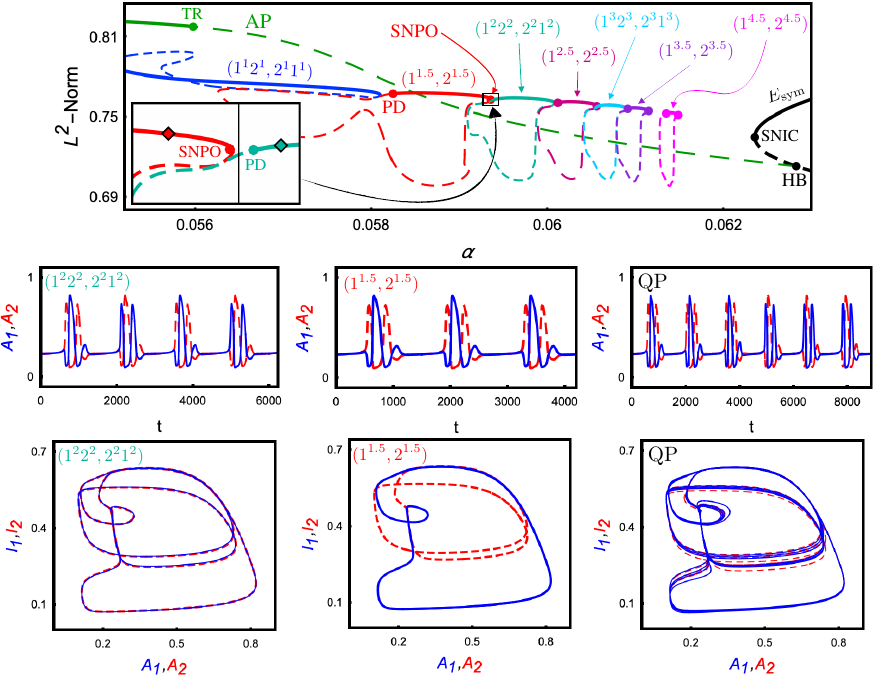}
  \put(-340,271){(a)}
  \put(-364,168){(b$_{\rm i}$)}
  \put(-242,168){(c$_{\rm i}$)}
  \put(-121,168){(d$_{\rm i}$)}
  \put(-366,96){(b$_{\rm ii}$)}
  \put(-244,96){(c$_{\rm ii}$)}
  \put(-123,96){(d$_{\rm ii}$)}
  \caption{Dynamics of system \eqref{eq:coupled}. 
  (a) Bifurcation diagram with respect to $\alpha$. The symmetric equilibria, $E_{\rm sym}$ (black curves), exist to the right of the SNIC bifurcation. The lower $E_{\rm sym}$ branch undergoes a subcritical HB, from which a (green) branch of AP limit cycles emerges. 
  The AP branch is unstable between the HB and TR bifurcations, and stable to the left of the TR bifurcation.
  Between the TR and SNIC points, the attractor is a periodic AP MMO (blue, turquoise, and cyan), periodic SSB MMO (red, maroon, purple, and pink), or a quasi-periodic SSB MMO (not shown). 
  Each MMO attractor exists on an isola, the width of which becomes increasingly narrow as $\alpha$ approaches the SNIC bifurcation. 
  Inset: zoom on a neighbourhood of the thin $\alpha$-interval between the stable plateaus of the red $(1^{1.5},2^{1.5})$ and turquoise $(1^22^2,2^2 2^2)$ branches, where the attractor is quasi-periodic. 
  The remaining panels show representative time series (middle row) and $(A,I)$ projections (bottom row) of  
  (b) a periodic AP MMO with $\alpha = 0.05946$ (turquoise diamond in (a) inset); 
  (c) a periodic SSB MMO with $\alpha = 0.0592$ (red diamond in (a) inset); and 
  (d) a quasi-periodic SSB MMO with $\alpha = 0.05937$ ($\alpha$ value indicated by the vertical black line in (a) inset).
  Blue curves correspond to oscillator $\Omega_1$ and dashed red curves correspond to oscillator $\Omega_2$.
}
  \label{fig:coupledbifnasymmetric}
\end{figure}

Starting at the right edge of Fig.~\ref{fig:coupledbifnasymmetric}, there is a pair of symmetric equilibria, one stable and the other unstable. 
As $\alpha$ is decreased, the symmetric equilibria collide and annihilate each other at a saddle-node on invariant circle (SNIC) bifurcation at $\alpha_{\rm SNIC} \approx 0.06235$. (The family of periodic orbits that terminate at this SNIC bifurcation exist over an extremely thin $\alpha$ interval and are not shown.)
Moreover, there is a subcritical Hopf bifurcation (HB) at $\alpha \approx 0.06282$ along the lower $E_{\rm sym}$ branch.

Emanating from the subcritical HB is a family of AP limit cycles, which are unstable between the HB and the torus (TR) bifurcation at $\alpha_{\rm TR} \approx 0.05598$. 
For $\alpha < \alpha_{\rm TR}$, the AP branch consists of AP relaxation oscillations with segments of slow variation interspersed by fast transitions (see Appendix~\ref{subsec:modeldynamics}).

For the $\alpha$ values in the interval $I_{\alpha} = (\alpha_{\rm TR},\alpha_{\rm SNIC}) = (0.05598,0.06235)$ enclosed by the TR and SNIC bifurcations, the attractor of the system is a mixed-mode oscillation (MMO).
There are three main types:
(i) periodic anti-phase (Fig.~\ref{fig:coupledbifnasymmetric}(b)),
(ii) periodic strong-symmetry breaking (Fig.~\ref{fig:coupledbifnasymmetric}(c)), and
(iii) quasi-periodic strong-symmetry breaking (Fig.~\ref{fig:coupledbifnasymmetric}(d)). 
We will show in Section~\ref{subsec:modeldecon} that these MMOs are canard-induced. 
  
The first type of MMO in $I_{\alpha}$ is the periodic AP MMO, which is a periodic solution of \eqref{eq:coupled} in which the oscillator $\Omega_1$ has two MMO events per period with signature\footnote{The Farey notation $L^s$ denotes an MMO with $L$ large-amplitude oscillations followed by $s$ small-amplitude oscillations; $(L_1^{s_1}, L_2^{s_2})$ denotes a periodic orbit in which $\Omega_1$ exhibits an $L_1^{s_1}$ MMO and $\Omega_2$ exhibits an $L_2^{s_2}$ MMO. Half-integer values of $s$ indicate the MMO has an odd number of twists, i.e., $s=k+\tfrac{1}{2}$ indicates the MMO exhibits $2k+1$ twists.} $1^{s} 2^{s}$ and the oscillator $\Omega_2$ has MMO signature $2^{s} 1^{s}$. 
A representative is shown for $s=2$ in Fig.~\ref{fig:coupledbifnasymmetric}(b), corresponding to the turquoise diamond in the inset of Fig.~\ref{fig:coupledbifnasymmetric}(a). 
The time series shows that when $\Omega_1$ exhibits its $1^s$ MMO, $\Omega_2$ exhibits its $2^s$ MMO, and vice versa. 
Since $\Omega_1$ and $\Omega_2$ are in anti-phase, they trace out the same paths in the $(A,I)$ phase plane. 

\medskip
\begin{remark}
The number of twists that the MMOs exhibit is difficult to discern from the time series. 
The geometric deconstruction (Section~\ref{subsec:modeldecon}) will provide a systematic method for determining the twist number. 
\end{remark}

In the interval $I_{\alpha}$ and for each $s \in \left\{ 1,2,\ldots \right\}$, the family of AP $(1^{s} 2^{s},2^{s} 1^{s})$ MMOs forms an isola.  
The isola has a stable plateau delimited by a period-doubling (PD) bifurcation on its left edge and a saddle-node of periodics (SNPO) bifurcation on its right edge. 
Outside of the stable plateau, the rest of the isola is unstable. 
The $s=1,2$, and $3$ isolas are shown in Fig.~\ref{fig:coupledbifnasymmetric}(a) in blue, turquoise, and cyan, respectively. 
Additional AP MMO isolas with $s>3$ exist in $I_{\alpha}$ to the right of the cyan $s=3$ isola (not shown). 
The AP MMOs are the most robust of the three MMO types. 
That is, the stable plateaus of the AP MMO isolas occupy the greatest fraction of $I_{\alpha}$. 
The widths of these AP MMO isolas shrink as $\alpha$ approaches $\alpha_{\rm SNIC}$. 

The second type of MMO in $I_{\alpha}$ is the periodic strong-symmetry breaking (SSB) MMO, which is a periodic solution of \eqref{eq:coupled} in which the oscillator $\Omega_1$ exhibits a $1^{s+1/2}$ MMO whereas $\Omega_2$ exhibits a $2^{s+1/2}$ MMO in each period. 
A representative is shown for $s=1$ in Fig.~\ref{fig:coupledbifnasymmetric}(b), corresponding to the red diamond in the inset of Fig.~\ref{fig:coupledbifnasymmetric}(a). 
The projection of this SSB MMO into the $(A,I)$ plane shows that the MMOs of $\Omega_1$ and $\Omega_2$ are structurally different since they trace out distinct paths in the $(A,I)$ plane.
As $\alpha$ is increased, the value of $s$ increases as does the duration of the interspike interval. 
In the limit as $\alpha$ approaches the SNIC bifurcation, the interspike interval becomes arbitrarily long and the state values of $\Omega_1$ and $\Omega_2$ converge to a symmetric equilibrium. 

In the interval $I_{\alpha}$ and for each $s \in \left\{ 1,2,\ldots \right\}$, the family of $(1^{s+1/2},2^{s+1/2})$ SSB MMOs forms an isola. 
These isolas have a stable segment and an unstable segment, with the stable segment delimited by a PD bifurcation on its left edge and by a SNPO bifurcation on its right edge. 
The $s=1,2,3$, and $4$ isolas are shown in Fig.~\ref{fig:coupledbifnasymmetric}(a) in red, maroon, purple, and pink, respectively. 
Additional SSB MMO isolas with $s>4$ exist in $I_{\alpha}$ to the right of the pink $s=4$ isola (not shown). 
The widths of these isolas become increasingly small as $\alpha$ is increased towards $\alpha_{\rm SNIC}$. 
Like the AP MMOs, the SSB MMOs occur robustly in $I_{\alpha}$ as their stable plateaus also occupy a substantial fraction of $I_{\alpha}$. 

Whilst the stable plateaus of the periodic AP MMOs and periodic SSB MMOs occupy most of $I_{\alpha}$, we observe that there are thin subsets of $I_{\alpha}$ in which neither is stable. 
For instance, in the thin $\alpha$-interval enclosed by the right SNPO point of the $(1^12^1,2^11^1)$ branch and the left PD point of the $(1^{1.5},2^{1.5})$ branch, the attractor is a quasi-periodic SSB MMO in which the MMOs can have 2 or 3 twists, i.e., $\Omega_1$ and $\Omega_2$ exhibit a mixture of $1^1, 2^1, 1^{1.5}$, and $2^{1.5}$ MMOs.
More generally, for each $\alpha$ interval delimited by the SNPO of the $(1^s2^s,2^s1^s)$ branch and the PD of the $(1^{s+1/2},2^{s+1/2})$ branch, the attractor of the system is quasi-periodic and features a mixture of $1^s, 2^s, 1^{s+1/2}$, and $2^{s+1/2}$ MMOs in its time series. 

For each $\alpha$ interval enclosed by the SNPO of the $(1^{s+1/2},2^{s+1/2})$ SSB MMO branch and the PD of the $(1^{s+1}2^{s+1},2^{s+1}1^{s+1})$ AP MMO branch, the attractor of the system is quasi-periodic and features a mixture of $1^{s+1/2}, 2^{s+1/2}, 1^{s+1}$, and $2^{s+1}$ MMOs in its time series.  For example, in the $\alpha$ interval enclosed by the SNPO of the $(1^{1.5},2^{1.5})$ SSB MMO branch and the PD of the $(1^22^2,2^21^2)$ AP MMO branch, the attractor is quasi-periodic (Fig.~\ref{fig:coupledbifnasymmetric}(c)) and its MMOs vary between exhibiting 3 or 4 twists, i.e., $\Omega_1$ and $\Omega_2$ exhibit a mixture of $1^{1.5},2^{1.5},1^2$, and $2^2$ MMOs in their time series.

Hence, as $\alpha$ is increased, the quasi-periodic MMOs mediate the transitions from one stable periodic AP MMO branch to the next stable periodic SSB MMO branch by increasing the twist number by one. Similarly, the transition from one stable periodic SSB MMO branch to the next stable AP MMO branch is mediated by a family of quasi-periodic MMOs that increment the twist number by one.

\subsection{Geometric deconstruction of a representative periodic SSB MMO}\label{subsec:modeldecon}
We now perform a detailed geometric deconstruction of the representative periodic SSB MMO from Fig.~\ref{fig:coupledbifnasymmetric}(c). 
Recall from Fig.~\ref{fig:modelcriticalmanifold} that the critical manifold restricted to the physically meaningful domain partitions as 
\[ S = S_a \cup L \cup S_s = S_a^d \cup L^d \cup S_a^{\ell} \cup L^{\ell} \cup S_a^u \cup L^u \cup S_a^r \cup  L^r \cup S_s. \]
Moreover, there are only three physically relevant singularities: a symmetric saddle equilibrium on $S_s$, a SFN on $L^d$, and a SFS on $L^u$. 
For the parameter values in Fig.~\ref{fig:coupledbifnasymmetric}(c), the SFN is weak with eigenvalues $\lambda_1 \approx -0.0053$ and $\lambda_2 \approx -0.0035$. 
Thus, the eigenvalue ratio is $\mu \approx 1.52$. 
By Proposition~\ref{prop:rotations}, solutions passing near the weak SFN can exhibit up to three twists, where a twist corresponds to half a rotation. 

\begin{figure}[h!]
   \centering
   \includegraphics[width=5in]{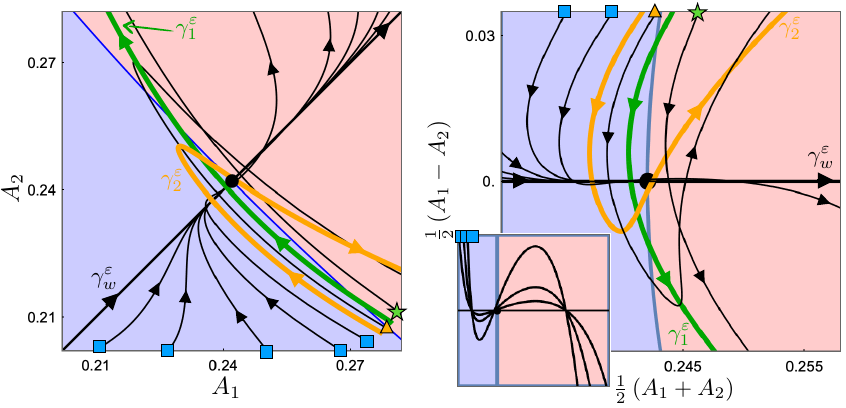}
   \put(-360,165){(a)}
   \put(-180,165){(b)}
   \caption{Local twisting of solutions about $\gamma_w^{\eps}$ for $\alpha = 0.0592$. Since the SFN (black marker) is weak with $\mu \approx 1.52$, there are up to $3$ twists. The secondary canards $\gamma_1^{\eps}$ (green) and $\gamma_2^{\eps}$ (orange) separate the subsets of $S_{a,\eps}^{d}$ with different numbers of twists. The star, triangle, and square markers correspond to initial conditions in the different twist ribbons. (a) Projection into the $(A_1,A_2)$ plane. (b) Projection into the $\left( \tfrac{A_1+A_2}{2}, \tfrac{A_1-A_2}{2} \right)$ plane. Inset: zoom on a small tube of size $6.5 \times 10^{-5}$ around $\gamma_w^{\eps}$.}
   \label{fig:modelsectors}
\end{figure}

We demonstrate the local twisting behaviour in Fig.~\ref{fig:modelsectors} for the half of $S_{a,\eps}^d$ below the axis of symmetry $\mathcal L_s$.
First, by Proposition~\ref{prop:rotations}, the weak canard persists as a maximal canard $\gamma_w^{\eps} \subset \mathcal L_s$. 
In addition to $\gamma_w^{\eps}$, there are two secondary maximal canards $\gamma_1^{\eps}$ and $\gamma_2^{\eps}$ (green and orange curves, respectively).
Solutions in the funnel of the weak SFN with initial conditions that lie to the right of $\gamma_1^{\eps}$ exhibit one twist around $\gamma_w^{\eps}$ (Fig.~\ref{fig:modelsectors}, solution with initial condition at the green star).
Solutions on $S_{a,\eps}^{d}$ enclosed by $\gamma_1^{\eps}$ and $\gamma_2^{\eps}$ exhibit two twists around $\gamma_w^{\eps}$ (Fig.~\ref{fig:modelsectors}, solution with initial condition at the orange triangle).
Solutions on $S_{a,\eps}^{d}$ enclosed by $\gamma_2^{\eps}$ and $\gamma_w^{\eps}$ exhibit three twists around $\gamma_w^{\eps}$ (Fig.~\ref{fig:modelsectors}, solutions with initial conditions at the blue squares).
We observe that most solutions in this maximal twist sector rapidly converge to $\gamma_w^{\eps}$ and follow $\gamma_w^{\eps}$ closely until they escape. 
That is, for most solutions in the maximal twist sector, the twists are not visible. (The vertical scale in the inset of Fig.~\ref{fig:modelsectors}(b) is $[-6.5 \times 10^{-5},6.5\times 10^{-5}]$, which shows that whilst the twists are present, they occur on a very small scale.)

\begin{figure}[h!]
    \centering
    \includegraphics[width=5in]{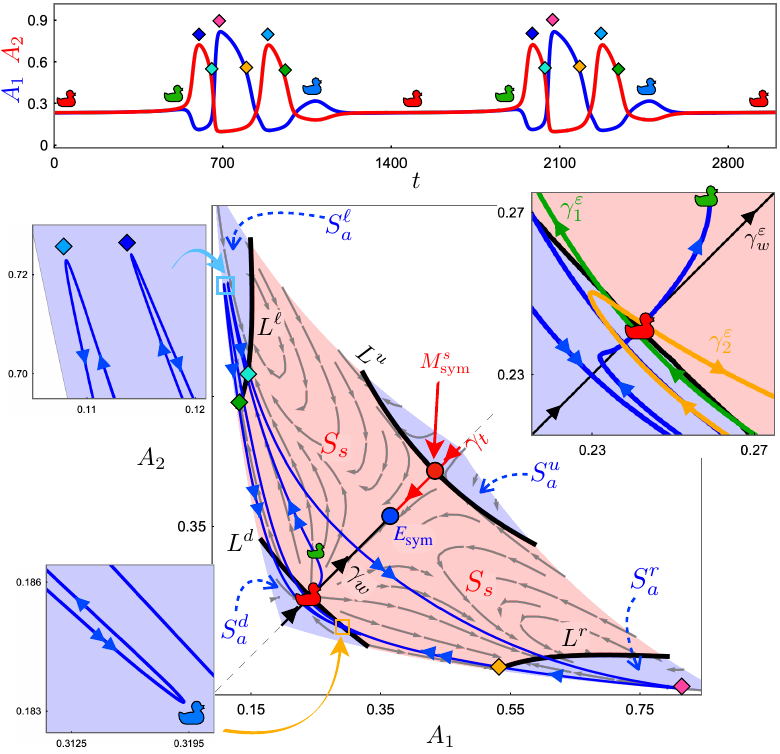}
    \put(-362,340){(a)}
    \put(-362,250){(b)}
    \caption{Geometric deconstruction of the periodic SSB $(1^{1.5},2^{1.5})$ MMO for $\alpha = 0.0592$. (a) Time series showing two full periods together with markers corresponding to key points in the geometric deconstruction. (b) Projection of the dynamics, restricted to the physically meaningful domain, into the $(A_1,A_2)$ plane together with the SSB MMO attractor (blue trajectory). 
    The geometry of the critical manifold and the reduced flow are the same here as in Fig.~\ref{fig:modelcriticalmanifold}.
    Upper right inset: zoom on a small neighbourhood of the weak SFN (red ducky). 
    Upper left inset: zoom on the neighbourhood of $S_a^{\ell}$ where the solution is projected to (diamond markers) after its fast jump from either the yellow diamond on $L^r$ or from the green duck on $S_s$.
    Lower left inset: zoom on the neighbourhood of $S_a^d$ where the solution is projected to after its fast jump from the green diamond on $L^{\ell}$.}
    \label{fig:deconperiodicmmos}
\end{figure}

With the twisting behaviour of $S_{a,\eps}^d$ near the weak SFN resolved, we proceed now to deconstruct the representative SSB MMO. It consists of 9 distinct segments (Fig.~\ref{fig:deconperiodicmmos}).
\begin{enumerate}[label=(\roman*)]
\setlength{\itemsep}{0pt}
\item Starting at the weak SFN (red ducky), there is a slow drift (Fig.~\ref{fig:deconperiodicmmos}(b), upper right inset, single blue arrow) along $\gamma_w^{\eps}$, corresponding to a portion of the interspike interval. 
\item When the orbit is a sufficient distance from the SFN, it peels off $\mathcal L_s$ and initiates a fast jump at the green ducky on $S_s$ and lands at the dark blue diamond on $S_a^{\ell}$ (Fig.~\ref{fig:deconperiodicmmos}(b), upper left inset). This fast jump corresponds to a fast up-jump in oscillator $\Omega_2$.
\item Then, the solution slowly drifts along $S_a^{\ell}$ from the dark blue diamond to the cyan diamond on $L^{\ell}$.  
\item Subsequently, there is a fast jump from the cyan diamond to the magenta diamond on $S_a^r$, corresponding to a fast up-jump in oscillator $\Omega_1$ and a fast down-jump in oscillator $\Omega_2$. 
\item From there, the solution drifts along $S_a^r$ until it reaches the yellow diamond on $L^r$. 
\item Next, the solution has a fast jump to the light blue diamond on $S_a^{\ell}$ (Fig.~\ref{fig:deconperiodicmmos}(b), upper left inset), corresponding to a fast down-jump in $\Omega_1$ and a fast up-jump in $\Omega_2$.
\item Then, the solution slowly drifts along $S_a^{\ell}$ until it reaches the fold curve $L^{\ell}$ at the green diamond. 
\item The solution then executes a fast jump (corresponding to a fast down-jump in $\Omega_2$) that lands at a point in the funnel of the weak SFN on $S_a^d$ (Fig.~\ref{fig:deconperiodicmmos}(b), lower left inset, blue ducky). In fact, the solution lands inside the maximal twist sector, i.e., below the maximal canard $\gamma_2^{\eps}$. Since the solution is in the portion of the funnel that belongs to the lower half of $S_a^d$ and the twist number is 3 (i.e., is odd), the solution must eventually escape $\gamma_w^{\eps}$ in the direction of $S_a^{\ell}$ (which we have already confirmed in segment (ii) at the green ducky).  
\item The solution converges to $\gamma_w^{\eps}$ (Fig.~\ref{fig:deconperiodicmmos}(b), upper right inset, single blue arrow) and closely follows it, corresponding to the initiation of the interspike interval. The solution closely tracks $\gamma_w^{\eps}$ until it returns to the SFN (red ducky), which completes the cycle. 
\end{enumerate}
Thus, we have shown that the SSB MMO consists of a local mechanism (the weak SFN) which is responsible for the (very) small-amplitude oscillations around $\gamma_w^{\eps}$ and interspike intervals, combined with a global return mechanism which is responsible for re-injecting orbits into the funnel of the weak SFN. 
Thus, the SSB MMO is canard-induced \cite{Brons2006}.
Moreover, the symmetry breaking is due to the local twisting of the slow manifolds in the neighbourhood of the weak SFN. 
Whether the fast jump away from $\gamma_w^{\eps}$ is in the direction of $S_a^{\ell}$ or $S_a^r$ (i.e., which of $\Omega_1$ and $\Omega_2$ will execute the fast jump) is determined by the parity of the twist number and by whether the solution is in the portion of the funnel above or below the axis of symmetry $\mathcal L_s$.

\subsection{Deconstructions for the other MMO types}\label{subsec:modelMMOdecons}
Having carefully presented the geometric deconstruction of the periodic SSB $(1^{1.5},2^{1.5})$ MMO, we now examine the deconstructions for the other MMOs in the interval $I_{\alpha} = (\alpha_{\rm TR}, \alpha_{\rm SNIC}) = (0.05598,0.06235)$. 
We first recall from Section~\ref{subsec:strongweaksfns} that SFNs exist for all $\alpha \in (0.05523,0.06303) \supset I_{\alpha}$.
Thus, since an SFN exists for all $\alpha \in I_{\alpha}$, the three classes of MMOs in $I_{\alpha}$ can all be shown to be canard-induced. 

For each $s \in \left\{ 2,3,\ldots \right\}$, the deconstruction of the periodic SSB $(1^{s+1/2},2^{s+1/2})$ MMO is similar to the one presented for the $s=1$ case in Section~\ref{subsec:modeldecon}. 
That is, each of the periodic SSB $(1^{s+1/2},2^{s+1/2})$ MMOs is canard-induced with a SFN that provides the local twisting and symmetry-breaking mechanism. 
The main difference between the different families of periodic SSB MMOs is that as $\alpha$ is increased, the eigenvalue ratio of the SFN decreases (recall Fig.~\ref{fig:modelevalueratio}(c)) and the maximal number of twists increases. 
Since the global return injects solutions into the maximal twist sector, the magnitudes of the small-amplitude oscillations around the weak canard are usually too small to be observed. 
However, the impact of having more and more twists around $\gamma_w^{\eps}$ is that the solution spends longer and longer times closely following $\gamma_w^{\eps}$ on $\mathcal L_s$. 
Consequently, we observe longer interspike intervals in the time series of the solution as $\alpha$ is increased. 
%

For each $s \in \left\{ 1,2,\ldots \right\}$, the deconstruction of the periodic AP $(1^s2^s,2^s 1^s)$ MMO also consists of a local symmetry-breaking twisting mechanism (the SFN on $L^d$) together with a global return mechanism. 
For the AP MMO attractor, the global return alternately projects obits to a point $p$ on the lower half of $S_a^d$ (below $\mathcal L_s$) in the funnel of the SFN or to its reflection $\mathcal R p$ (above $\mathcal L_s$), which is also in the funnel of the SFN. 
We show the projection of the periodic AP $(1^22^2,2^2 1^2)$ MMO in the $(A_1,A_2)$ plane in Fig.~\ref{fig:deconapqp}(a) but refrain from listing out the segments of the deconstruction, since the details are very similar to the deconstruction of the periodic SSB MMO from Section~\ref{subsec:modeldecon}. 

\begin{figure}[h!]
  \centering
  \includegraphics[width=5in]{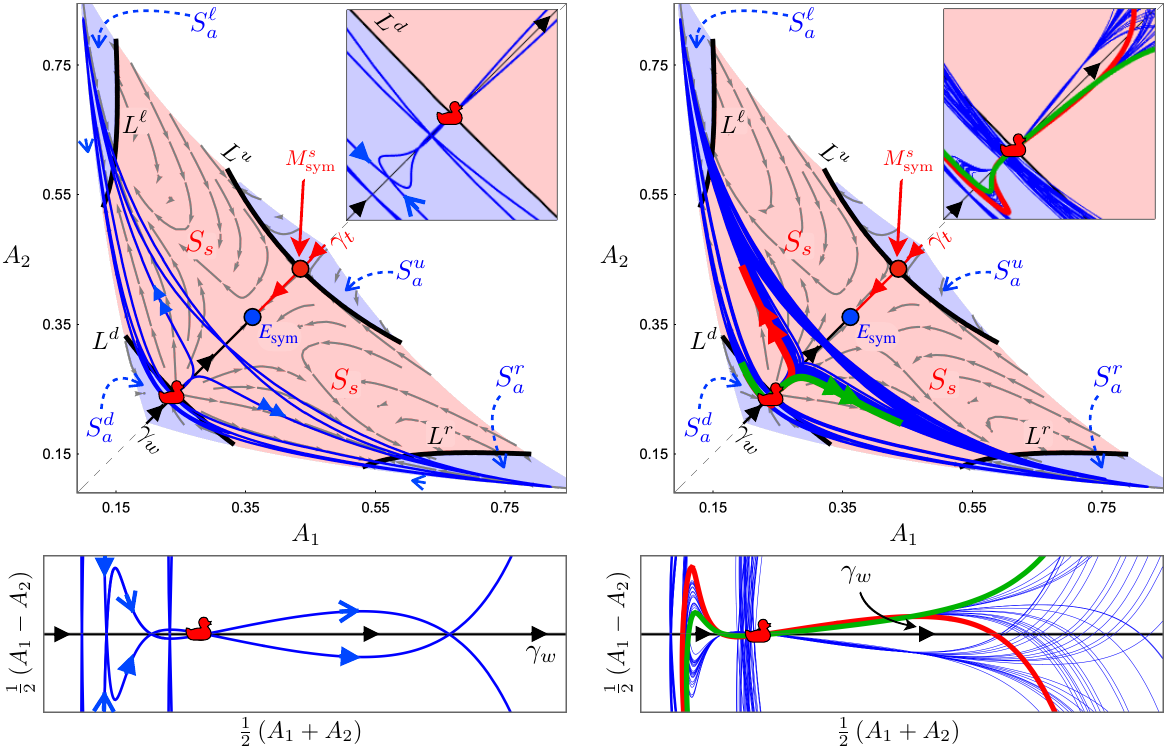}
  \put(-362,226){(a$_{\rm i}$)}
  \put(-362,62){(a$_{\rm ii}$)}
  \put(-182,226){(b$_{\rm i}$)}
  \put(-184,62){(b$_{\rm ii}$)}
  \caption{Deconstructions of a representative periodic AP MMO (left column) for $\alpha = 0.05946$ and a representative quasi-periodic SSB MMO for $\alpha = 0.05937$ (right column). 
  Top row: projections into the $(A_1,A_2)$ plane.
  Bottom row: projections into the $(\tfrac{A_1+A_2}{2},\tfrac{A_1-A_2}{2})$ plane in a small neighbourhood of $\gamma_w$. 
  (a) The global return alternately projects orbits to a point $p$ (resp., $\mathcal R p$) in the funnel in the lower (resp., upper) half of $S_a^d$.
  (b) The global return projects solutions to points in the funnel that are very close to, and straddle, a maximal canard. Because of the exponential sensitivity of solutions near maximal canards, the escape of solutions may be in either the direction of $S_a^{\ell}$ (red orbit segment) or in the direction of $S_a^r$ (green orbit segment). This also implies that solutions that escape in different directions have different numbers of twists, since they land on different sides of a maximal canard.
  }
  \label{fig:deconapqp}
\end{figure}

\medskip
\begin{remark}
From Fig.~\ref{fig:coupledbifnasymmetric}, the AP $(1^12^1,2^11^1)$ MMO family exists for $0.05505 \lesssim \alpha \lesssim 0.05810$. 
We also observe that the SFN on $L^d$ is strong type for $\alpha_{\rm FSN\, III} \lesssim \alpha \lesssim \alpha_{\rm DSFN}$ (Section~\ref{subsec:strongweaksfns}), where $\alpha_{\rm FSN\, III} = 0.05523$ denotes a FSN III bifurcation of the desingularized system and $\alpha_{\rm DSFN} = 0.05837$ denotes a degenerate SFN, where the SFN changes type from strong to weak.
Thus, the family of AP $(1^12^1,2^11^1)$ MMOs are examples of rhythms with strong SFNs that exhibit two twists around the strong canard. 
Moreover, the rightmost boundary of the $(1^12^1,2^11^1)$ MMO isola occurs at $\alpha \approx 0.05810 = \alpha_{\rm DSFN} + \mathcal O(\eps^2)$. 
Similarly, the leftmost boundary of the $(1^12^1,2^11^1)$ MMO isola occurs at $\alpha \approx 0.05505 = \alpha_{\rm FSN \, III} + \mathcal O(\eps^2)$. 
\end{remark}

The deconstruction of the quasi-periodic SSB MMO is similar to that of the periodic SSB MMO and of the periodic AP MMO. 
The main difference to the other MMO types is that the global return reinjects solutions into the funnel at points that are very close to, and straddle, a maximal canard $\gamma_n^{\eps}$. 
Consequently, the solution can exhibit MMOs with either $n$ or $n+1$ twists, depending on which side of $\gamma_n^{\eps}$ the global return projects the solution to.
An example is shown in Fig.~\ref{fig:deconapqp}(b). 
The green orbit segment corresponds to a portion of the quasi-periodic solution that lands close to, but just to the right of, the maximal canard $\gamma_3^{\eps}$ (not shown) and hence exhibits $3$ twists. 
Since the orbit approaches $\gamma_w^{\eps}$ from the upper half of $S_a^d$ (i.e., above $\mathcal L_s$) and the parity of the twist is odd, the subsequent fast jump is in the direction of $S_a^r$. 
The red orbit segment corresponds to a portion of the solution that lands close to, but just left of, the maximal canard $\gamma_3^{\eps}$ and hence exhibits $4$ twists. 
Since the orbit approaches $\gamma_w^{\eps}$ from the upper half of $S_a^d$ and the parity of the twist is even, the subsequent fast jump is in the direction of $S_a^{\ell}$.

Finally, we observe that the SNIC bifurcation at $\alpha_{\rm SNIC} \approx 0.06235$ where the MMO families terminate occurs at the same $\alpha$ value as a saddle-node bifurcation of ordinary singularities of the desingularized system \eqref{eq:modeldesingularized}.
That is, for $\alpha < \alpha_{\rm SNIC}$ the desingularized system has no ordinary singularities near the SFN (Fig.~\ref{fig:modelsnic}(a)).  
At $\alpha = \alpha_{\rm SNIC}$, the slow nullclines $\{ G_1=0 \}$ and $\{ G_2 = 0 \}$ touch at a single symmetric equilibrium point on the axis of symmetry within $S_a$.
Then, for $\alpha > \alpha_{\rm SNIC}$ the intersection $\{ G_1=0 \} \cap \{ G_2 = 0 \}$ perturbs to a pair of symmetric ordinary singularities on $S_a$ near the SFN (Fig.~\ref{fig:modelsnic}(b)). One of these is a saddle (red diamond) and the other is a stable node (blue square). 
The stable (red arrows) and unstable manifolds of the saddle are orthogonal and parallel to $\mathcal L_s$, respectively. 
The strong stable (blue, double arrows) and weak stable (blue, single arrows) manifolds of the stable node are orthogonal and parallel to $\mathcal L_s$, respectively. 

\begin{figure}[h!]
   \centering
   \includegraphics[width=5in]{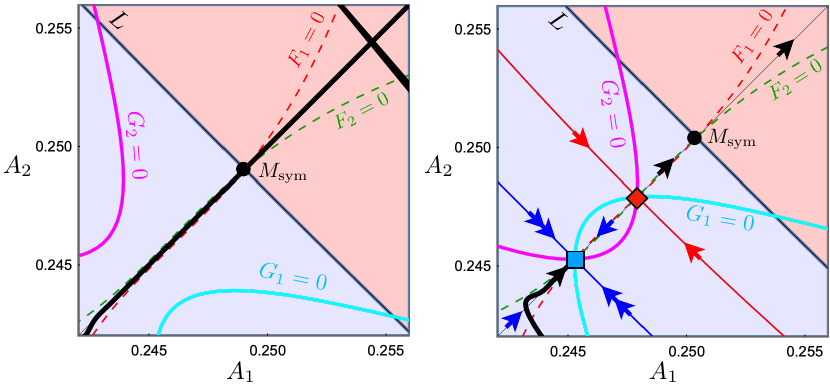}
   \put(-360,160){(a)}
   \put(-180,160){(b)}
   \caption{Dynamics near the SFN for $\alpha \approx \alpha_{\rm SNIC}$. (a) For $\alpha = 0.062 < \alpha_{\rm SNIC}$, the slow nullclines (cyan and magenta curves) have no intersections. As such, there is nothing to prevent solutions from tunnelling through the SFN along the weak canard $\gamma_w^{\eps} \subset \mathcal L_s$. The attractor (thick black curve) is a quasi-periodic SSB MMO with small-amplitude oscillations and interspike intervals generated by the canard dynamics around the weak SFN. (b) For $\alpha = 0.0625 > \alpha_{\rm SNIC}$, there is a symmetric saddle (red diamond) and symmetric stable node (blue square) on $S_a$. The funnel of the SFN is enclosed by the fold curve $L$ and by the (red) stable manifold of the saddle. Most solutions are projected into the basin of attraction of the stable node and hence the attractor is a symmetric equilibrium.}
   \label{fig:modelsnic}
\end{figure}

Thus, the mechanism by which the SNIC bifurcation occurs is as follows. 
Since there are no ordinary singularities near the SFN for $\alpha<\alpha_{\rm SNIC}$, solutions can tunnel through the weak SFN region and the geometric deconstructions discussed above continue to hold.
As such, the attractor of the coupled system for $\alpha<\alpha_{\rm SNIC}$ is a quasi-periodic SSB MMO. 
Then, for $\alpha>\alpha_{\rm SNIC}$, the stable manifold of the saddle equilibrium creates a boundary that severely limits the width of the funnel of the SFN.
Thus, any solution that lands on $S_a^d$ to the left of the stable manifold of the saddle lies in the basin of attraction of the stable node equilibrium. 
Due to this geometry, solutions may only transiently exhibit symmetry-breaking MMOs if they are initialized in the funnel of the SFN. 
Otherwise, the global return mechanism eventually projects solutions directly into the basin of attraction of the stable node and solutions converge to the symmetric stable node equilibrium.

\section{Discussion}  \label{sec:discussion}
In this article, we combined techniques from geometric singular perturbation theory and blow-up methods to analyse a novel symmetry-breaking mechanism --the $\mathbb Z_2$-symmetric folded node-- in coupled identical slow/fast oscillator networks with strong nonlinear mutually inhibitory coupling. 
We showed that the SFN is a folded node singularity that occurs at a cusp bifurcation of the layer problem, cf. cusped node singularities in  \cite{Kristiansen2023}. 
We established that at the SFN, the fold curve of the critical manifold is orthogonal to the axis of symmetry $\mathcal L_s$. 
We also showed that the SFN has two types: strong and weak. 
In both cases, the eigenvectors (of the desingularized system) at the SFN are parallel and orthogonal to $\mathcal L_s$. 
The SFN is strong if the strong eigendirection is the one aligned with $\mathcal L_s$ and it is weak if the weak eigendirection is the one aligned with $\mathcal L_s$. 
These distinctions are important since only the solution corresponding to the eigendirection aligned with $\mathcal L_s$ is a canard solution. 

For the strong SFN, the only singular canard is the strong canard $\gamma_0$.
The strong canard persists as a maximal canard $\gamma_0^{\eps}$ and, in contrast to classical folded nodes, it serves as the axis of rotation for the twisting of solutions. 
Moreover, we established that all solutions on the subset of the attracting sheet $S_a$ of the critical manifold bounded by the fold curve $L$ belong to  the funnel of the SFN. 
As such, every solution on $S_a$ bounded by $L$ exhibits at least one twist about $\gamma_0^{\eps}$.
We also demonstrated numerically that, in addition to the twisting about $\gamma_0^{\eps}$, there is a secondary twisting of solutions about an as yet undetermined secondary axis of rotation. 
We conjecture that this secondary axis of rotation is the solution that corresponds in the singular limit to the solution that is tangent at the SFN to the weak eigendirection (and hence is orthogonal to $\mathcal L_s$). 
We provided preliminary blow-up analysis that indicates that there is a distinguished curve of nilpotent fixed points in the blown-up vector field.  
We postulate that a cylindrical blow-up of this distinguished nilpotent curve will allow us to study the dynamics of the secondary rotations.

For the weak SFN, we showed that the only singular canard is the weak canard $\gamma_w$.
The weak canard persists as a maximal canard $\gamma_w^{\eps}$ and it serves as the axis of rotation for the twisting of solutions.  
We showed that all solutions on the subset of $S_a$ enclosed by $L$ are part of the funnel of the weak SFN. 
As such, every solution on $S_a$ bounded by $L$ exhibits at least one twist about $\gamma_w^{\eps}$.
In addition to the primary weak canard, there exist $n$ secondary maximal canards $\gamma_i^{\eps}$ that partition the slow manifolds into ribbons, where solutions in the ribbon enclosed by $\gamma_{i}^{\eps}$ and $\gamma_{i+1}^{\eps}$ exhibit $i$ twists about $\gamma_w^{\eps}$. 
The parity of the ribbon, i.e., the number of twists that it exhibits, determines the direction of the subsequent fast jump, and plays an important role in symmetry-breaking.

We then demonstrated our theoretical results in a model of the eukaryotic cell cycle \eqref{eq:coupled}. The model consists of two identical oscillators, each with slow/fast dynamics, coupled by strong nonlinear mutual inhibition. 
The attractors of \eqref{eq:coupled} may be symmetric equilibrium states, anti-phase limit cycles, or symmetry-breaking mixed-mode oscillations. 
By applying the theory of canard-induced mixed mode oscillations together with our theory for SFNs, we showed that the MMOs can be decomposed into a local mechanism and a global return mechanism. 
The local mechanism consists of the SFN on the fold curve $L^d$, which induces local twisting and ribboning of the slow manifolds. 
The parity of the twists/ribbons determines the direction of the subsequent fast jump and plays a key role in the symmetry-breaking. 
The global return mechanism consists of the fast jumps from the fold curves $L^{\ell}$ and $L^r$ that project orbits into the funnel of the SFN on $S_a^d$.
If the global return projects orbits to points in the funnel far from a maximal canard, then the attractor is a periodic AP MMO or a periodic SSB MMO. 
However, if the global return projects points into the funnel very close to a maximal canard, then the exponential sensitivity of maximal canards alters the landing point of the global return in each MMO event and the resulting attractor is quasi-periodic. 
Hence, the crossing of the solution of \eqref{eq:coupled} over a maximal canard is the mechanism by which the system may transition from a periodic AP MMO attractor to a periodic SSB MMO attractor, via a quasi-periodic MMO attractor. 

In the course of our analysis of the coupled cell model, we showed that the symmetric folded singularities may be symmetric folded nodes, symmetric folded saddles, or symmetric folded saddle-nodes, depending on parameters. 
In particular, the symmetric folded saddle-nodes were the $\mathbb Z_2$-symmetric analogues of the classical FSN I, FSN II, and FSN III bifurcations. 
Whilst the $\mathbb Z_2$-symmetric FSNs bear many similarities to their classical counterparts, they do possess distinct features worth examining in future work. 
For instance, in the $\mathbb Z_2$-symmetric FSN I bifurcation, an entire sheet of the critical manifold vanishes at the same time as the symmetric folded singularities. 
The $\mathbb Z_2$-symmetirc FSN II bifurcation was carefully analysed in \cite{Kristiansen2023} in the context of linearly coupled FitzHugh-Nagumo oscillators. 
Additionally, the class of FSN III bifurcations that we reported on here involve pitchfork bifurcations of the desingularized system in which pairs of asymmetric folded singularities bifurcated from a symmetric folded singularity. To the best of our knowledge, this class of FSN III bifurcations has not been reported before and their rigorous analysis has yet to be performed.

Symmetrically coupled identical oscillators were also used in \cite{gandhi2025conceptual} to investigate SSB dynamics in the context of eukaryotic cell cycle regulation. In that work, SSB was shown to be governed by a homoclinic bifurcation associated with a symmetric equilibrium. A key difference is that the uncoupled oscillator in \cite{gandhi2025conceptual} is intrinsically excitable, whereas the oscillators considered here are intrinsically oscillatory. Whether this distinction is responsible for the two mathematically different SSB mechanisms remains an interesting direction for future work. Whilst we have provided analysis of the symmetrically coupled model \eqref{eq:coupled}, many studies of the eukaryotic cell cycle induce symmetry-breaking by introducing asymmetry into the vector field itself. For instance, the bifurcation analyses in \cite{Dragoi2024} examine the states that may arise by varying a parameter in oscillator $\Omega_1$ but not $\Omega_2$, or by allowing the coupling strengths between the two oscillators to be different. How the geometric structures and mechanisms that we have identified here would persist and change with these symmetry-breaking changes to the vector field is left to future work.



\appendix
\section{Dynamics in the entry chart $\boldsymbol K_1$} \label{appendix:entrychart}
In this Appendix, we outline the proof of Proposition~\ref{prop:entrychart}. The system \eqref{eq:blownupK1} possesses invariant hyperplanes $\left\{ r_1 = 0 \right\}$ and $\left\{ \eps_1=0 \right\}$. To study the full dynamics in $K_1$, we examine the dynamics in these lower-dimensional invariant subspaces, and systematically build up to the full 4D phase space. 

In the invariant subspace $\left\{ r_1=0, \eps_1=0 \right\}$, the dynamics are given by
\begin{equation}  \label{eq:blownupK1r1eps1}
  \begin{split}
      \dot y_1 &= 0, \\
      \dot z_1 &= y_1 + \lambda_1 z_1.
  \end{split}
\end{equation}
This system possesses a line, $\ell_1 = \left\{ r_1 = 0, y_1 \in \mathbb R, z_1 = -\tfrac{y_1}{\lambda_1}, \eps_1 = 0 \right\}$, of equilibria. The eigenvalues at any point in $\ell_1$ are $\lambda_1$ and $0$. The corresponding stable and center subspaces are 
\[ \mathbb E^s(\ell_1) = {\rm span} \begin{bmatrix} 0 \\ 1 \end{bmatrix} \quad {\rm and } \quad \mathbb E^c(\ell_1) = {\rm span} \begin{bmatrix} -\lambda_1 \\ 1 \end{bmatrix}. \]
Thus, the system \eqref{eq:blownupK1r1eps1} has a 1D center manifold in the invariant hyperplane $\left\{ r_1=0, \eps_1=0 \right\}$, with transverse stable fibers.

In the invariant subspace $\left\{ r_1 = 0 \right\}$, the dynamics are given by 
\begin{equation}
  \begin{split}
  	\dot y_1 &= \eps_1 \left( 2y_1 + \left( \lambda_1+\lambda_2 \right) z_1 \right), \\ 
	\dot z_1 &= y_1 + \lambda_1 z_1 + \eps_1 z_1, \\ 
	\dot \eps_1 &= 2 \eps_1^2.
  \end{split}
\end{equation}
Once again, the line $\ell_1$ is a set of equilibria. The stable spectrum consists of the eigenvalue $\lambda_1$ and the center spectrum consists of two zero eigenvalues. The corresponding linear subspaces are 
\[ \mathbb E^s\left( \ell_1 \right) = {\rm span} \begin{bmatrix} 0 \\ 1 \\ 0 \end{bmatrix}, \quad \text{ and } \quad  \mathbb E^c \left( \ell_1 \right) = \operatorname{span} \left\{ \begin{bmatrix} -\lambda_1 \\ 1 \\ 0 \end{bmatrix}, \begin{bmatrix} 0 \\ -\lambda_2 y_1 \\ -\lambda_1^3 \end{bmatrix} \right\}, \]
where the second vector in $\mathbb E^c(\ell_1)$ is a generalized eigenvector. Thus, by the center manifold theorem, there exists a 2D center manifold in the invariant subspace $\left\{ r_1 = 0 \right\}$, with transverse stable fibers. 

Next, the dynamics in the invariant subspace $\{ \eps_1 = 0 \}$ are governed by
\begin{equation}
  \begin{split}
      \dot r_1 &= 0, \\
      \dot y_1 &= 0, \\ 
      \dot z_1 &= y_1 + \lambda_1 z_1 + r_1 H_1.
  \end{split}
\end{equation}
This system possesses a 2D surface of equilibria, $S_1 = \left\{ r_1 \geq 0, y_1 \in \mathbb R, z_1 = -\tfrac{y_1}{\lambda_1} + \mathcal O(r_1), \eps_1 = 0 \right\}$, which contains the line $\ell_1$ of equilibria. The stable spectrum of $S_1$ is $\sigma_s(S_1) = \{ \lambda_1+\mathcal O(r_1) \}$, and the center spectrum of $S_1$ is $\sigma_c(S_1) = \{ 0,0 \}$. The corresponding (generalized) eigenspaces are
\[ \mathbb E^s(S_1) = {\rm span} \begin{bmatrix} 0 \\ 0 \\ 1 \end{bmatrix} \quad {\rm and } \quad \mathbb E^c(S_1) = {\rm span} \left\{ \begin{bmatrix} -\lambda_1+\mathcal O(r_1) \\ 0 \\ -\tfrac{\gamma_1}{\lambda_1} y_1 + \mathcal O(r_1, y_1^2) \end{bmatrix}, \begin{bmatrix} -1+\mathcal O(r_1) \\ -\tfrac{\gamma_1}{\lambda_1} y_1 + \mathcal O(r_1, y_1^2) \\ 0 \end{bmatrix} \right\}. \]
Thus, there is a 2D center manifold of the surface of equilibria, which emanates from $\ell_1$.

For the full 4D system \eqref{eq:blownupK1}, the surface $S_1$ is a manifold of equilibria with negative eigenvalue $\lambda_1 + \mathcal O(r_1)$ and triple zero eigenvalue. The stable and center subspaces are given by  
\[ \mathbb E^s(S_1) = {\rm span}\begin{bmatrix} 0\\0\\1\\0 \end{bmatrix} \quad {\rm and } \quad \mathbb E^c (S_1) = {\rm span} \left\{ \begin{bmatrix} -\lambda_1+\mathcal O(r_1) \\ 0 \\ -\tfrac{\gamma_1}{\lambda_1} y_1 + \mathcal O(r_1, y_1^2) \\ 0 \end{bmatrix}, \begin{bmatrix} -1+\mathcal O(r_1) \\ -\tfrac{\gamma_1}{\lambda_1} y_1 + \mathcal O(r_1, y_1^2) \\ 0 \\ 0 \end{bmatrix}, \begin{bmatrix} 0 \\ \lambda_2 y_1 + \mathcal O(r_1) \\ 0 \\ \lambda_1^3 + \mathcal O(r_1) \end{bmatrix}\right\}. \]
%

\section{Dynamics in the exit chart $\boldsymbol K_3$}  \label{appendix:exitchart}
In this Appendix, we outline the proof of Proposition~\ref{prop:exitchart}.  As with the entry chart, we examine the dynamics of the blown-up system \eqref{eq:blownupK3} in a sequence of lower-dimensional invariant subspaces and systematically build up to the dynamics of the full 4D system. 

In the invariant subspace $\left\{ r_3 = 0, \eps_3 = 0 \right\}$, the dynamics of \eqref{eq:blownupK3} reduce to 
\begin{equation} 
  \begin{split}
    \dot y_3 &= 0, \\ 
    \dot z_3 &= y_3 - \lambda_1 z_3.  
  \end{split}
\end{equation}
This system possesses a line, $\ell_3 = \left\{ r_3=0, y_3 \in \mathbb R, z_3 = \tfrac{y_3}{\lambda_1}, \eps_3 = 0 \right\}$, of equilibria with eigenvalues given by $\lambda = -\lambda_1$ and $\lambda=0$. Recall that $\lambda_1<0$, so that $\ell_3$ has a 1D unstable subspace and 1D center subspace. These subspaces are given by
\[ \mathbb E^u \left( \ell_3 \right) = \operatorname{span} \begin{bmatrix} 0 \\ 1 \end{bmatrix}  \quad \text{ and } \quad 
\mathbb E^c \left( \ell_3 \right) = \operatorname{span} \begin{bmatrix} \lambda_1 \\ 1 \end{bmatrix}.
\]
Thus, there exists a 1D center manifold of the line of equilibria $\ell_3$ in the subspace $\left\{ r_3 = 0, \eps_3 = 0 \right\}$.

In the invariant subspace $\left\{ r_3 = 0 \right\}$, the dynamics of \eqref{eq:blownupK3} simplify to 
\begin{equation}
  \begin{split}
    \dot y_3 &= \eps_3 \left(- 2y_3 +  \left( \lambda_1+\lambda_2 \right) z_3  \right), \\ 
    \dot z_3 &= y_3 - \lambda_1 z_3 - \eps_3 z_3, \\ 
    \dot \eps_3 &= -2\eps_3^2.
  \end{split}
\end{equation}
The equilibria of this system are given by the line $\ell_3$. For a point in $\ell_3$, the eigenvalues are $-\lambda_1,0$, and $0$, and the corresponding subspaces are 
\[ \mathbb E^u\left( \ell_3 \right) = \operatorname{span} \begin{bmatrix} 0 \\ 1 \\ 0 \end{bmatrix} \quad \text{ and } \quad 
\mathbb E^c\left( \ell_3 \right) = \operatorname{span} \left\{ \begin{bmatrix} \lambda_1 \\ 1 \\ 0 \end{bmatrix}, \begin{bmatrix} 0 \\ \lambda_2 y_3 \\ -\lambda_1^3 \end{bmatrix}  \right\},
\]
where the second vector in $\mathbb E^c(\ell_3)$ is a generalized eigenvector. By center manifold theory, we have that there exists a 2D center manifold of the line $\ell_3$ in the hyperplane $\{ r_3=0 \}$.

Next, we examine the invariant subspace $\{ \eps_3 = 0 \}$. The dynamics are given by
\begin{equation}
  \begin{split}
    \dot r_3 &= 0, \\ 
    \dot y_3 &= 0, \\ 
    \dot z_3 &= y_3 - \lambda_1 z_3 + r_3 H_3.
  \end{split}
\end{equation}
This system possesses a surface, $S_3 = \left\{ r_3 \geq 0, y_3 \in \mathbb R, z_3 = \tfrac{y_3}{\lambda_1} + \mathcal O(r_3), \eps_3 = 0 \right\}$, of equilibria. It has a positive eigenvalue $-\lambda_1 + \mathcal O(r_3)$ and a double zero eigenvalue. The associated subspaces are 
\[ \mathbb E^u (S_3) = \operatorname{span} \begin{bmatrix} 0 \\ 0 \\1 \end{bmatrix} \quad \text{ and } \quad
\mathbb E^c (S_3) = \operatorname{span} \left\{ \begin{bmatrix} \lambda_1 + \mathcal O(r_3) \\ 0 \\ \tfrac{\gamma_1}{\lambda_1} y_3 + \mathcal O(r_3,y_3^2) \end{bmatrix}, \begin{bmatrix} -1+\mathcal O(r_3) \\ \tfrac{\gamma_1}{\lambda_1} y_3 + \mathcal O(r_3,y_3^2) \\ 0 \end{bmatrix} \right\}.
\]
Hence, for each point of $S_3$, there exists a 2D center manifold which emanates from the line $\ell_3$.

For the full 4D system \eqref{eq:blownupK3}, the surface $S_3$ is a manifold of equilibria with positive eigenvalue $-\lambda_1 + \mathcal O(r_1)$ and triple zero eigenvalue. The unstable and center subspaces are given by  
\[ \mathbb E^u(S_3) = {\rm span}\begin{bmatrix} 0\\0\\1\\0 \end{bmatrix} \quad {\rm and } \quad \mathbb E^c (S_3) = {\rm span} \left\{ \begin{bmatrix} \lambda_1+\mathcal O(r_3) \\ 0 \\ \tfrac{\gamma_1}{\lambda_1} y_3 + \mathcal O(r_3,y_3^2) \\ 0 \end{bmatrix}, \begin{bmatrix} -1+\mathcal O(r_3) \\ \tfrac{\gamma_1}{\lambda_1} y_3 + \mathcal O(r_3,y_3^2) \\ 0 \\ 0 \end{bmatrix}, \begin{bmatrix} 0 \\ \lambda_2 y_3 + \mathcal O(r_3) \\ 0 \\ \lambda_1^3 + \mathcal O(r_3) \end{bmatrix}\right\}. \]
%

\section{Numerical computation of maximal canards}  \label{appendix:bvp}
In this appendix, we briefly outline the set-up of the two-point boundary value problem used to compute maximal canard solutions of the normal form \eqref{eq:normalform}. The method here closely follows that of \cite{Hasan2017}. Note that the computation of the slow manifolds follows a very similar procedure.

As is usual in the numerical computation of slow manifolds and canard orbits, we first rescale time to the unit interval so that the total integration time appears as an explicit control parameter: 
\begin{equation} \label{eq:rescaledvf}
  \begin{split}
    \dot{\boldsymbol u} &= T \boldsymbol f (\boldsymbol u),
  \end{split}
\end{equation}
where $\boldsymbol u = (u_1,u_2,v_1,v_2)^T$, $\boldsymbol f = (f_1,f_2,\eps g_1, \eps g_2)^T$ is the right-hand-side of \eqref{eq:normalform}, $T$ is the total integration time, and the overdot denotes the derivative with respect to the rescaled time. 
We compute canard orbits as solutions of \eqref{eq:rescaledvf} that satisfy boundary conditions of the form
\begin{equation*}
  \begin{split}
    \boldsymbol u(0) \in \Sigma_0 \quad \text{ and } \quad \boldsymbol u(1) \in \Sigma_1,
  \end{split}
\end{equation*}
where $\Sigma_0$ is a 1D curve on $S_a$ and $\Sigma_1$ is a 1D curve chosen to be transverse to the unstable manifold of the saddle slow manifold $S_{s,\eps}$. For the normal form \eqref{eq:normalform}, we choose the left endpoint data to satisfy
\[ \Sigma_0 = S_a \cap \left\{ u_2-u_1 = 0.3 \right\}, \]
which is a 1D curve on $S_a$ parallel to $\mathcal L_s$ and sufficiently far from the SFN. For the right endpoint, we choose 
\[ \Sigma_1 = \left\{ f_1 = 0 \right\} \cap \left\{ f_2 = \mathcal O(\eps) \right\} \cap \left\{ u_2 = \sigma : \sigma = {\rm constant} \right\}. \]
The first two conditions in $\Sigma_1$ select a particular slow manifold $S_{s,\eps}$ from the exponentially close family of saddle slow manifolds, and the third condition is chosen with $\sigma$ sufficiently far from the SFN.

\section{Bifurcations of the desingularized system} \label{subsec:desingbifn}
As demonstrated in Section~\ref{subsec:strongweaksfns}, the desingularized system \eqref{eq:modeldesingularized} has a rich bifurcation structure. It consists of branches of symmetric ordinary singularities, $E_{\rm sym}$, asymmetric ordinary singularities, $E_{\rm asym}$, symmetric folded singularities, $M_{\rm sym}$, and asymmetric folded singularities, $M_{\rm asym}$. These branches connect, interact, and create/annihilate each other in myriad ways. In this appendix, we give a detailed account of the bifurcation structure that incorporates information about both the ordinary and folded singularities (Fig.~\ref{fig:desingbifn}).

\begin{figure}[h!]
    \centering
    \includegraphics[width=5in]{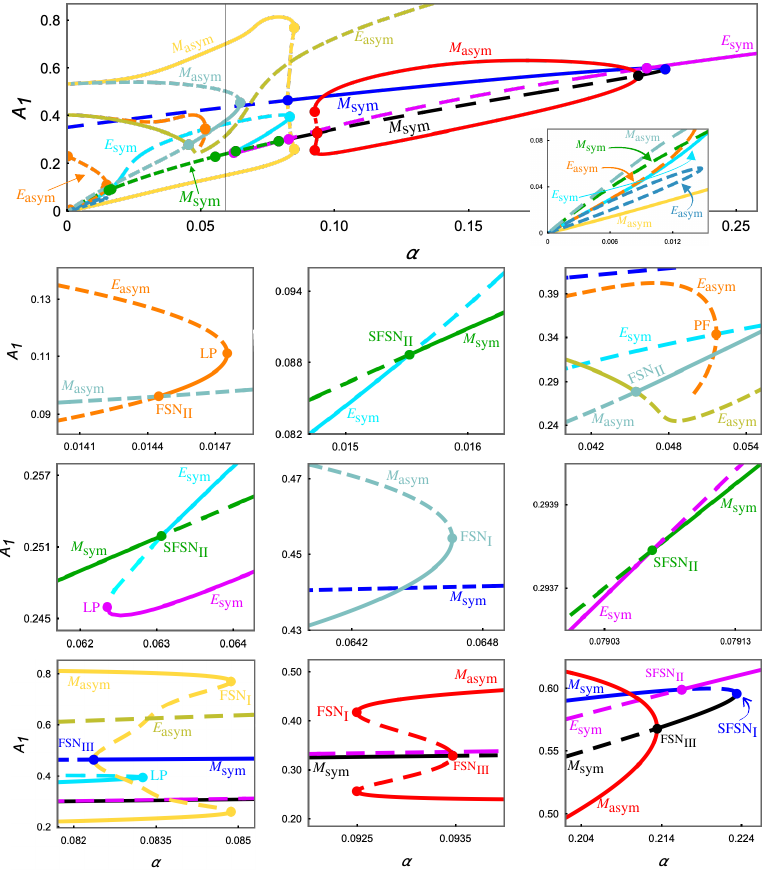}
    \put(-361,404){(a)}
    \put(-361,280){(b)}
    \put(-240,280){(c)}
    \put(-120,280){(d)}
    \put(-361,188){(e)}
    \put(-240,188){(f)}
    \put(-120,188){(g)}
    \put(-361,93){(h)}
    \put(-240,93){(i)}
    \put(-120,93){(j)}
    \caption{Bifurcations of the ordinary and folded singularities of the desingularized system \eqref{eq:modeldesingularized}. 
    The thin black vertical line along $\alpha = 0.0592$ indicates the $\alpha$ value used in Figs.~\ref{fig:modelcriticalmanifold}, \ref{fig:coupledbifnasymmetric}(b), \ref{fig:modelsectors}, and \ref{fig:deconperiodicmmos}.
    There are branches of symmetric and asymmetric ordinary singularities, $E_{\rm sym}$ and $E_{\rm asym}$, respectively, as well as branches of symmetric and asymmetric folded singularities, $M_{\rm sym}$ and $M_{\rm asym}$, respectively. These branches undergo saddle-node, transcritical, and pitchfork bifurcations (in the case of ordinary singularities), and FSN bifurcations of types I, II, and III (in the case of folded singularities). We denote the $\mathbb Z_2$-symmetric FSN bifurcations of types I and II by SFSN$_{\rm I}$ and SFSN$_{\rm II}$, respectively.}
    \label{fig:desingbifn}
\end{figure}

At $\alpha = 0$, there is a degenerate bifurcation in which numerous distinct branches of ordinary and folded singularities appear to emerge from the origin (Fig.~\ref{fig:desingbifn}(a), inset). There are also 6 non-trivial singularities. In order of increasing $A_1$, the non-trivial singularities consist of
(i) an asymmetric equilibrium $E_{\rm asym}$ (orange), 
(ii) a symmetric folded singularity $M_{\rm sym}$ (blue), 
(iii)-(iv) a pair of asymmetric equilibria (olive and orange) which almost coincide, and 
(v)-(vi) a pair of asymmetric folded singularities (steel and yellow) which also almost coincide. 

As the parameter $\alpha$ is increased, the steel blue curve of asymmetric folded singularities $M_{\rm asym}$ crosses the orange curve of asymmetric equilibria $E_{\rm asym}$ in a FSN II bifurcation (Fig.~\ref{fig:desingbifn}(b)). The steel branch transitions from folded saddles to faux folded nodes, and the orange branch transitions from saddles to stable nodes. Moreover, the orange branch of asymmetric equilibria eventually collides with an unstable branch of asymmetric equilibria at a saddle-node bifurcation (Fig.~\ref{fig:desingbifn}(b), LP), after which the orange branch (locally) ceases to exist. 

For larger values of $\alpha$, there are 9 branches of singularities (6 folded and 3 ordinary). Increases in $\alpha$ cause the green $M_{\rm sym}$ branch to pass through the cyan $E_{\rm sym}$ branch in a $\mathbb Z_2$-symmetric FSN II bifurcation (Fig.~\ref{fig:desingbifn}(c), SFSN$_{\rm II}$). The symmetric folded singularities transition from SFSs to SFNs, and the symmetric equilibria transition from stable nodes to saddles. 

Next, the steel $M_{\rm asym}$ and olive $E_{\rm asym}$ branches undergo a FSN II bifurcation (Fig.~\ref{fig:desingbifn}(d)). The asymmetric folded singularities are folded saddles/folded nodes to the left/right of the FSN II, respectively.
Similarly, the asymmetric equilibria are stable nodes/saddles to the left/right of the FSN II. 
Subsequently, with further increases in $\alpha$, the orange $E_{\rm asym}$ branches terminate on the cyan $E_{\rm sym}$ branch at a pitchfork bifurcation. (The lower portion of the orange $E_{\rm asym}$ branch may extend to smaller $\alpha$, however, our numerical continuation codes were unable to compute beyond the values shown.) 

For even larger $\alpha$, there are 8 singularities (6 folded and 2 ordinary). The green branch of symmetric folded singularities and cyan branch of symmetric ordinary singularities undergo a $\mathbb Z_2$-symmetric FSN II bifurcation (Fig.~\ref{fig:desingbifn}(e), SFSN$_{\rm II}$). There, $M_{\rm sym}$ changes from a SFN to a SFS as $\alpha$ is increased, whilst $E_{\rm sym}$ changes from a saddle to a stable node. There is also a nearby saddle-node (LP) bifurcation of the symmetric equilibria where an additional magenta branch is created. 

By increasing $\alpha$ further, we encounter a region where the asymmetric folded singularities undergo a FSN I bifurcation (Fig.~\ref{fig:desingbifn}(f)). More specifically, the upper steel $M_{\rm asym}$ branch of folded saddles and the lower steel $M_{\rm asym}$ branch of folded nodes collide and annihilate each other in a genuine saddle-node bifurcation. 

For larger $\alpha$ still, we find that the green branch of symmetric folded singularities undergoes yet another $\mathbb Z_2$-symmetric FSN II bifurcation (Fig.~\ref{fig:desingbifn}(g), SFSN$_{\rm II}$). This time, the green branch of SFSs coalesces with the magenta branch of stable node equilibria at the SFSN II point. Subsequently, the green branch consists of SFNs and the magenta branch consists of symmetric saddles. 

Further increases in $\alpha$ lead us to a region of the parameter space where several branches of singularities are created and annihilated (Fig.~\ref{fig:desingbifn}(h)). First, the blue $M_{\rm sym}$ branch undergoes a FSN III bifurcation in which the symmetric folded singularity changes type from SFS to SFN, and a pair of (yellow) asymmetric folded saddles bifurcate off the symmetric branch. 
As $\alpha$ is increased, the dashed yellow $M_{\rm asym}$ folded saddles collide with solid yellow $M_{\rm asym}$ folded nodes at a FSN I bifurcation. 
In this region, we also find that the cyan $E_{\rm sym}$ branch undergoes a saddle-node bifurcation. 

In the next $\alpha$ interval (Fig.~\ref{fig:desingbifn}(i)), increases in $\alpha$ result in a pair of FSN I bifurcations. Each FSN I gives rise to a branch of asymmetric folded nodes and asymmetric folded saddles (solid and dashed red branches). The (dashed red) asymmetric folded saddles terminate at a FSN III bifurcation on the black $M_{\rm sym}$ branch. 

The outer red $M_{\rm asym}$ branches of folded nodes terminate at a FSN III bifurcation (Fig.~\ref{fig:desingbifn}(j)), where they merge with the black $M_{\rm sym}$ branch. 
The black SFNs that exist to the right of the FSN III collide with the dashed blue SFSs at a $\mathbb Z_2$-symmetric FSN I bifurcation. 
These blue SFSs also undergo a $\mathbb Z_2$-symmetric FSN II bifurcation with the magenta $E_{\rm sym}$ branch. 
For $\alpha$ values to the left/right of the $\mathbb Z_2$-symmetric FSN II, $M_{\rm sym}$ is a SFN/SFS and $E_{\rm sym}$ is a saddle/stable node. 
Finally, for all $\alpha$ values beyond the FSN I, there is only a single stable symmetric equilibrium.

\section{Dynamics of the symmetrically coupled model} \label{subsec:modeldynamics}
In this appendix, we provide a more global overview of the bifurcation structure of the symmetrically coupled cell model \eqref{eq:coupled} with respect to $\alpha$ (Fig.~\ref{fig:coupledbifn}(a)). The system possesses four distinct types of states: symmetric equilibria, $E_{\rm sym}$, asymmetric equilibria, $E_{\rm asym}$, anti-phase (AP) limit cycles, and broken-symmetry rhythms. 

\begin{figure}[h!]
   \centering
   \includegraphics[width=5in]{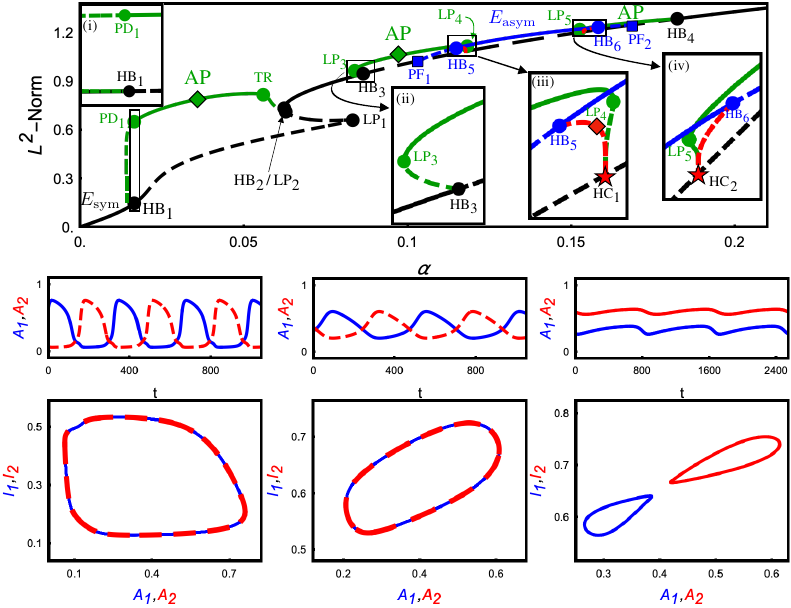}
   \put(-352,270){(a)}
   \put(-361,148){(b$_{\rm i}$)}
   \put(-240,148){(c$_{\rm i}$)}
   \put(-120,148){(d$_{\rm i}$)}
   \put(-361,94){(b$_{\rm ii}$)}
   \put(-240,94){(c$_{\rm ii}$)}
   \put(-120,94){(d$_{\rm ii}$)}
   \caption{Dynamics of the coupled cell model \eqref{eq:coupled} for the standard parameter set. (a) Partial bifurcation diagram showing only the biologically relevant states, i.e., those with non-negative values. Stability/instability is indicated by solid/dashed curves. The symmetric equilibria, $E_{\rm sym}$ (black curves), undergo Hopf bifurcations, which give rise to anti-phase (AP) limit cycles (green curve), and pitchfork bifurcations, which give rise to asymmetric equilibria, $E_{\rm asym}$ (blue curves). The asymmetric equilibria also undergo Hopf bifurcations, which give rise to weak symmetry-breaking rhythms (red curves). The strong symmetry-breaking MMOs that we study in Section~\ref{subsec:modelmmos} exist in the interval between the TR and LP$_2$ bifurcations.
   Remaining panels: dynamics of $\Omega_1$ (solid blue) and $\Omega_2$ (dashed red), corresponding to the diamond markers in (a). 
   (b$_{\rm i}$) and (b$_{\rm ii}$): time series and $(A,I)$ projection of the AP relaxation oscillation attractor for $\alpha = 0.035$. (c$_{\rm i}$) and (c$_{\rm ii}$): time series and $(A,I)$ projection of the AP limit cycle attractor for $\alpha = 0.095$. (d$_{\rm i}$) and (d$_{\rm ii}$): time series and $(A,I)$ projection of the weak symmetry-breaking rhythm for $\alpha = 0.117424$.}
   \label{fig:coupledbifn}
\end{figure}

Starting at $\alpha = 0$, the symmetric equilibria, $E_{\rm sym}$ (black curve), are stable. As $\alpha$ is increased, the $E_{\rm sym}$ branch undergoes a subcritical Hopf bifurcation (HB$_1$) at $\alpha \approx 0.016578$ and becomes unstable. The $E_{\rm sym}$ branch remains unstable until the saddle-node bifurcation (LP$_2$) at $\alpha \approx 0.062352$ is reached\footnote{This is actually the SNIC bifurcation identified in Fig.~\ref{fig:coupledbifnasymmetric}. The periodic branch that terminates on the SNIC exists over an extremely thin $\alpha$ interval and is not shown.}. Then, $E_{\rm sym}$ is stable until there is another subcritical Hopf bifurcation (HB$_3$) at $\alpha \approx 0.086496$, after which $E_{\rm sym}$ becomes unstable. As $\alpha$ is further increased, we find that a pair of asymmetric equilibria, $E_{\rm asym}$ (blue curve), emerge from $E_{\rm sym}$ in a pitchfork bifurcation (PF$_1$) at $\alpha \approx 0.103796$. The asymmetric equilibria cease to exist at another pitchfork bifurcation (PF$_2$) at $\alpha \approx 0.168628$. Next, the symmetric equilibrium regains stability at a supercritical Hopf bifurcation (HB$_4$) at $\alpha \approx 0.182313$, and remains stable for all $\alpha$ to the right of HB$_4$. 

At each of the Hopf bifurcations (black circles) along the $E_{\rm sym}$ branches, a family of AP limit cycles (green curves) emerges. The AP cycles that emerge from the subcritical Hopf bifurcation HB$_1$ are unstable. These become stable in a period-doubling bifurcation (PD$_1$) at $\alpha \approx 0.016554$. On this stable segment of the green AP branch, the AP limit cycles are relaxation oscillations with segments of slow variation interspersed by fast transitions (Fig.~\ref{fig:coupledbifn}(b$_{\rm i}$)). In the projection into the $(A,I)$ phase plane, the two oscillators trace out the same paths (Fig.~\ref{fig:coupledbifn}(b$_{\rm ii}$)). These AP relaxation oscillations are stable until they encounter the torus bifurcation (TR) at $\alpha \approx 0.05598$. For larger $\alpha$, the AP relaxation oscillations are unstable and terminate at the subcritical Hopf bifurcation (HB$_2$) on the branch of symmetric equilibria at $\alpha \approx 0.062821$. There is a very small $\alpha$-interval between PD$_1$ and HB$_1$, i.e., for $0.016554 \lesssim \alpha \lesssim 0.016578$, on which the AP limit cycles and the symmetric equilibria are bistable (Fig.~\ref{fig:coupledbifn}(a), inset (i)).

The AP limit cycles that emerge from the subcritical Hopf bifurcation HB$_3$ are unstable. This unstable branch collides with a stable branch of AP limit cycles at the saddle-node of periodics bifurcation (LP$_3$) at $\alpha \approx 0.083927$. These stable AP limit cycles are approximately sinusoidal (Fig.~\ref{fig:coupledbifn}(c$_{\rm i}$)) and are comparable to the out-of-phase oscillations of the S- and M-modules of the cell cycle \cite{Dragoi2024}. As before, the projection of the AP limit cycle attractor into the $(A,I)$ plane shows that the two oscillators trace out the same paths (Fig.~\ref{fig:coupledbifn}(c$_{\rm ii}$)). We note that there is an interval of bistability between the symmetric equilibria and the AP limit cycles for $\alpha$ values between LP$_3$ and HB$_3$ (Fig.~\ref{fig:coupledbifn}(a), inset (ii)).
As $\alpha$ is increased, the AP branch collides with another unstable AP branch at the saddle-node of periodics (LP$_4$) at $\alpha \approx 0.118126$. 
The unstable AP branch then terminates on the symmetric equilibrium branch in a homoclinic bifurcation (HC$_1$) at $\alpha \approx 0.11766$ (Fig.~\ref{fig:coupledbifn}(a), inset (iii)).
Consequently, there is a small window of $\alpha$ values where the asymmetric equilibrium state and an AP limit cycle are both stable (Fig.~\ref{fig:coupledbifn}(a), inset (iii)).  

The AP limit cycles that emanate from the supercritical Hopf bifurcation HB$_4$ are stable, and they are qualitatively similar to the AP limit cycles presented in Fig.~\ref{fig:coupledbifn}(c$_{\rm i}$) and (c$_{\rm ii}$). This stable AP branch merges with an unstable AP branch at a saddle-node of periodics bifurcation (LP$_5$) at $\alpha \approx 0.152537$. This unstable AP branch terminates at the homoclinic bifurcation (HC$_2$) at $\alpha \approx 0.15374$ (Fig.~\ref{fig:coupledbifn}(a), inset (iv)). We observe that there is a small $\alpha$-interval, enclosed by LP$_5$ and HB$_6$ for which the asymmetric equilibrium state (blue curve) is bistable with the almost sinusoidal AP limit cycles (Fig.~\ref{fig:coupledbifn}(a), inset (iv)).

The asymmetric equilibria (blue curve) that emerge from the pitchfork bifurcation PF$_1$ at $\alpha \approx 0.103796$ are unstable between PF$_1$ and the subcritical Hopf bifurcation (HB$_5$) at $\alpha \approx 0.114814$. The asymmetric equilibrium branch is then stable up to the subcritical Hopf bifurcation (HB$_6$) at $\alpha \approx 0.158204$.

The limit cycles (Fig.~\ref{fig:coupledbifn}(a); dashed, red curves) that emerge from the asymmetric subcritical Hopf bifurcation HB$_5$ are unstable weak symmetry-breaking rhythms. (We use weak symmetry-breaking here to refer to solutions that are small perturbations of a symmetric state.) These weak symmetry-breaking rhythms have scalloped time series profiles (Fig.~\ref{fig:coupledbifn}(d$_{\rm i}$)). The projection of the weak symmetry-breaking rhythm into the $(A,I)$ plane (Fig.~\ref{fig:coupledbifn}(d$_{\rm ii}$)) shows that it is close to a symmetric AP rhythm. The branch of weak symmetry-breaking rhythms terminates at the nearby homoclinic bifurcation HC$_1$ (Fig.~\ref{fig:coupledbifn}(a), inset (iii)). 
Similarly, the limit cycles that emerge from the asymmetric subcritical Hopf bifurcation HB$_6$ are weak symmetry-breaking rhythms that terminate at a nearby homoclinic bifurcation (HC$_2$) at $\alpha \approx 0.15374$ (Fig.~\ref{fig:coupledbifn}, inset (iv)). 

The strong symmetry-breaking MMOs studied in Section~\ref{subsec:modelmmos} exist in the $\alpha$ interval between the torus bifurcation and the SNIC bifurcation (labelled LP$_2$).



\end{document}